\documentclass[a4paper,12pt]{article} 
\usepackage{graphicx}
\usepackage[english]{babel} 
\usepackage{amsmath,amssymb,amsthm,amsfonts} 
\usepackage{latexsym}
\usepackage[all]{xy}
\usepackage{url}
\usepackage{mathrsfs}
\usepackage[latin1]{inputenc}

\usepackage{hyperref}
\hypersetup{
 pdftitle={A representative of R Gamma(N,T) for higher dimensional twists of Zpr(1)},
 pdfauthor={Alessandro Cobbe}}

\newcommand{\Q}{\mathbb Q}
\newcommand{\N}{\mathbb N}
\newcommand{\Z}{\mathbb Z}

\newcommand{\p}{\mathfrak p}
\newcommand{\Gal}{\mathrm{Gal}}
\renewcommand{\epsilon}{\varepsilon}
\newcommand{\Kur}{K_{\mathrm{nr}}}

\newcommand{\Zpnr}{\mathbb Z_p^\mathrm{nr}}
\newcommand{\calM}{\mathcal M}

\newcommand{\chinr}{\chi^{\mathrm{nr}}}
\newcommand{\rhonr}{\rho^{\mathrm{nr}}}
\newcommand{\rhoQpnr}{\rho^{\mathrm{nr}}_{\Q_p}}
\newcommand{\BdR}{B_\mathrm{dR}}
\newcommand{\Ucris}{U_\mathrm{cris}}
\newcommand{\Ind}{\mathrm{Ind}}
\newcommand{\coker}{\mathrm{coker}}

\newcommand{\CEP}{C_{EP}^{na}(N/K, V)}
\newcommand{\lra}{\longrightarrow}
\newcommand{\calF}{\mathcal F}

\newcommand{\calI}{\mathcal{I}}

\newcommand{\oo}{\mathcal{O}}

\newcommand{\Qpc}{\Q_p^c}
\newcommand{\Map}{\mathrm{Map}}
\newcommand{\Nnr}{\calI_{N/K}(\rhonr)}
\newcommand{\Lnr}{\calI_{L/K}(\rhonr)}
\newcommand{\Mnr}{\calI_{M/K}(\rhonr)}

\newcommand{\fFL}{f_{\calF,L}}

\newcommand{\tfFL}{\tilde{f}_{\calF,L}}
\newcommand{\calG}{\mathcal G}
\newcommand{\zz}{\mathcal Z}
\newcommand{\Gl}{\mathrm{Gl}}
\newcommand{\Gm}{\mathbb{G}_m}
\newcommand{\Qpnr}{{\mathbb Q_p^\mathrm{nr}}}

\newtheorem{thm_intro}{Theorem}

\newtheorem{thm}{Theorem}[section]
\newtheorem{lemma}[thm]{Lemma}
\newtheorem{coroll}[thm]{Corollary}
\newtheorem{prop}[thm]{Proposition}

\theoremstyle{remark}

\newtheorem{example}[thm]{Example}

\theoremstyle{definition}

\textwidth=430pt
\hoffset=-20pt
\voffset=-30pt
\textheight=700pt
\topmargin=-20pt

\title{A representative of $R\Gamma(N,T)$\\
for higher dimensional twists of $\Z_p^r(1)$}

\author{Alessandro Cobbe}
\date{}
\begin{document}
\maketitle
\begin{abstract}
Let $N/K$ be a Galois extension of $p$-adic number fields and let $V$ be a de Rham representation of the absolute Galois group $G_K$ of $K$. In the case $V=\Q_p(1)$, the equivariant local $\epsilon$-constant conjecture describes the compatibility of the equivariant Tamagawa number conjecture with the functional equation of Artin $L$-functions and it can be formulated as the vanishing of a certain element $R_{N/K}$ in $K_0(\Z_p[G],\Qpc[G])$; a similar approach can be followed also in the case of unramified twists $\Q_p(1)(\rhonr)$ of $\Q_p(1)$ (see \cite{IV} and \cite{BC2}). One of the main technical difficulties in the computation of $R_{N/K}$ arises from the so-called cohomological term $C_{N/K}$, which requires the construction of a bounded complex $C_{N,T}^\bullet$ of cohomologically trivial modules which represents $R\Gamma(N,T)$ for a full $G_K$-stable $\Z_p$-sublattice $T$ of $V$. In this paper we generalize the construction of $C_{N,T}^\bullet$ in \cite[Thm.~2]{BC2} to the case of a higher dimensional $T$.
\end{abstract}

\begin{section}*{Introduction}
Let $K$ be a $p$-adic number field and let $\rhonr:G_K\to \Gl_r(\Z_p)$ be an unramified representation of the Galois group $G_K=\Gal(K^c/K)$. To such a representation we can associate the image of the Frobenius element $F_K$, i.e. the matrix $\rhonr(F_K)\in \Gl_r(\Z_p)$. We will see that this is actually a one to one correspondence between unramified $r$-dimensional representations of $G_K$ and matrices in $\Gl_r(\Z_p)$. We will be mainly interested in the case where $\rhonr$ is the restriction to $G_K$ of some unramified representation $\rhoQpnr:G_{\Q_p}\to \Gl_r(\Z_p)$, which, as above, can be described by a matrix $u\in \Gl_r(\Z_p)$. More concretely, our object of study will be the $G_K$-module $T=\Z_p^r(1)(\rhonr)$, i.e. the twist of $\Z_p^r$, considered with the trivial action, by the cyclotomic character $\chi_\mathrm{cyc}$ and by $\rhonr$.

The module $T$ arises in a very natural way as the Tate module of a particular formal group, as follows. By \cite[Sec.~13.3]{Hazewinkel78} there is a unique $r$-dimensional Lubin-Tate formal group $\calF = \calF_{pu^{-1}}$ attached to the parameter $pu^{-1}$. We will show that the $p$-adic Tate module $T_p\calF$ of $\calF$ is isomorphic to $T$ over the completion $\overline{\Qpnr}$ of the maximal unramified extension of $\Q_p$.

A module $T$ as above arises also in a more geometrical setting. If $A/\Q_p$ is an
abelian variety of dimension $r$ with good ordinary reduction, then we will show that the Tate module of the associated formal group $\hat{A}$ is isomorphic to $\Z_p^r(1)(\rhoQpnr)$ for an appropriate choice of $\rhoQpnr$. Here it is worth to remark that the converse is not true, i.e. not every module $\Z_p^r(1)(\rhoQpnr)$ comes from an abelian variety.

Let $N$ be a Galois extension of $K$ with Galois group $G$ and let $R\Gamma(N,T)$ be the complex of the $G_N$-invariants of the standard resolution of $T$. In this paper we construct a bounded complex of cohomologically trivial $G$-modules which represents $R\Gamma(N,T)$, generalizing \cite[Thm.~2]{BC2} to the higher dimensional case $r>1$. More concretely we will define $\Nnr$ similarly to \cite{BC2}, endow it with a natural action of $\Gal(\Kur/K) \times G$ and prove the following theorem.

\begin{thm_intro}\label{mainthm}
The complex
\[C^\bullet_{N, T} := \left[ \Nnr \xrightarrow{(F-1)\times 1} \Nnr \right]\]
with non-trivial modules only in degree $1$ and $2$ represents $R\Gamma(N,T)$.
\end{thm_intro}

Specializing our results to $r=1$, we will of course recover the corresponding results of \cite{BC2}, and as a by-product we will eliminate the distinction between the cases $\chinr|_{G_N}=1$ and $\chinr|_{G_N}\neq 1$, which had been studied separately in \cite{BC2}.

The main motivation for studying $R\Gamma(N,T)$ comes from the equivariant epsilon constant conjecture, which we denote by $\CEP$, where $V=\Q_p\otimes_{\Z_p}T$. Here the subscript
EP is short for Euler-Poincar\'e and refers to the fact that the conjecture
is formulated in terms of certain Euler-Poincar\'e characteristics. The
superscript na was added by Izychev and Venjakob for their non-abelian
generalization. For a formulation see for example Conjecture 3.1.1
in \cite{BC2}; for more details  and some remarks on the history of the conjecture we refer the interested reader to the introduction and Section 3.1 of \cite{BC2} or to the introduction of \cite{BC}; see also \cite{BurnsFlach98} and \cite{FukKato}, where these conjectures have been formulated first. The main idea in \cite{BC2} was to translate the conjecture to the language of $K$-theory in the spirit of \cite{BreuPhd} and prove it to be equivalent to the vanishing of an element
\[\begin{split}  R_{N/K}&= C_{N/K} + \Ucris+rm\partial^1_{\Z_p[G], \BdR[G]}(t) - m U_{tw}(\rho^{nr}_{\Q_p}) \\
&\qquad\qquad- r U_{N/K} + \partial^1_{\Z_p[G], \BdR[G]}(\epsilon_D(N/K,V))
\end{split}\]
in the relative algebraic $K$-group $K_0(\Z_p[G],\BdR[G])$.

We recall that for $r=1$ and representations $\rhonr$ which are restrictions of unramified extensions $\rhoQpnr : G_{\Q_p} \lra \Z_p^\times$ Izychev and Venjakob in \cite{IV} have proven the validity of $\CEP$ for tame extensions $N/K$. The main result of \cite{BC2} shows 
that $\CEP$ holds for certain weakly and wildly ramified finite abelian extensions $N/K$. We will generalize both results in an upcoming work with Werner Bley:

\begin{thm_intro}\label{intro thm 1}
Let $N/K$ be a tame extension of $p$-adic number fields and let
\[\rhoQpnr \colon G_{\Q_p} \lra \Gl_r(\Z_p)\]
be an unramified representation of $G_{\Q_p}$.
Let $\rhonr$ denote the restriction of $\rhoQpnr$ to $G_K$. Then $\CEP$ is true for $N/K$ and $V = \Q_p^r(1)(\rhonr)$,
if $\det(\rhonr(F_N) - 1) \ne 0$.
\end{thm_intro}

In particular, this includes the case in which $T$ is the $p$-adic Tate module of the formal group of an abelian variety defined over $\Q_p$. We will also prove $\CEP$ in some special cases of wild ramification, including for example the following statement.

\begin{thm_intro}\label{intro thm 2}
Let $p$ be an odd prime, let $A/\Q_p$ be an $r$-dimensional abelian variety with good ordinary reduction and let $K/\Q_p$ be the unramified extension of degree $m$. Let $N/K$ be a weakly and wildly ramified finite abelian extension with cyclic ramification group. Let $d$ denote the inertia degree of $N/K$ and assume that $m$ and $dp^{r-1}t(r)$ are relatively prime, where $t = t(r) := \prod_{i=1}^{r} \left( p^{i} - 1 \right)$ or $t=1$ according to whether $r>1$ or $r=1$. Then $\CEP$ is true for $N/K$ and $V = \Q_p\otimes_{\Z_p}T_p\hat A$, where $T_p\hat A$ is the Tate module of the formal group of $A$.
\end{thm_intro}

The results of the present paper will play a central role in the proof of both theorems. Actually, in the weakly ramified setting we will prove a slightly more general statement, but we renounce to formulate it here in order to avoid some technicalities.

\textbf{Notation.} We will mostly rely on the notation of \cite{BC2}. In particular $L/K$ will always denote a finite Galois extension, $L_0$ will be the completion of the maximal unramified extension $L^\mathrm{nr}$ of $L$ and $\widehat{L_0^\times}$ the $p$-completion of $L_0^\times$. We will denote by $e_{L/K}$ and $d_{L/K}$ the ramification index and the inertia degree of $L/K$, $\oo_L$ will be the ring of integers of $L$ and $U_L$ will be its group of units. We also set $\Lambda_L=\prod_r \widehat{L_0^\times}(\rhonr)$, $\Upsilon_L=\prod_r \widehat{U_{L_0}}(\rhonr)$ and $\zz=\Z_p^r(\rhonr)$ and we will use an additive notation for the (twisted) action of the absolute Galois group $G_L$. The elements fixed by the action of $G_L$ will be denoted by $\Lambda_L^{G_L}$, $\Upsilon_L^{G_L}$ and $\zz^{G_L}$ respectively.

If $A$ is a ring (and so in particular also if it is a field), then we denote $A^r=\{(a_1,\dots,a_r)\ |\ a_i\in A\}$; if $A$ is a multiplicative group, then we use the same notation for $A^r=\{a^r\ |\ a\in A\}$. Context will be strong enough to prevent any confusion. All vectors are intended to be columns, even if we write them horizontally for typographical reasons.

Let $\varphi$ be the absolute Frobenius automorphism, let $F_L$ be the Frobenius automorphism of $L$ and let $F=F_K$ be the Frobenius of $K$. We will denote $\calM_L=\rhonr(F_L)-I$.

\end{section}

\begin{section}[Unramified p-adic representations]{Unramified $p$-adic representations}\label{unramrep}

The aim of this section is to give a characterization of the unramified representations of $G_K$.

\begin{lemma}\label{GLZpOrders}
For $r, n \ge 1$ one has
\[\# \Gl_r(\Z_p / p^n\Z_p) = p^{(n-1)r^2} \cdot p^{s} \cdot  t\]
with
\[s = s(r) := (r-1)r / 2, \quad t = t(r) := \prod_{i=1}^{r} \left( p^{i} - 1 \right). \]
\end{lemma}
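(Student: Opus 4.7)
The plan is to use the reduction-mod-$p$ homomorphism $\pi \colon \Gl_r(\Z/p^n\Z) \to \Gl_r(\F_p)$ to break the count into two pieces: the size of the image and the size of the kernel. The hard part is essentially routine linear algebra over $\F_p$; there is no serious obstacle.

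First I would verify that $\pi$ is well-defined and surjective. A matrix $M \in M_r(\Z/p^n\Z)$ is invertible iff $\det(M) \in (\Z/p^n\Z)^\times$, which happens iff $\det(M) \not\equiv 0 \pmod p$, i.e.\ iff $\pi(M) \in \Gl_r(\F_p)$. This shows $\pi$ takes values in $\Gl_r(\F_p)$, and conversely that any lift of an element of $\Gl_r(\F_p)$ to $M_r(\Z/p^n\Z)$ lies in $\Gl_r(\Z/p^n\Z)$, giving surjectivity.

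Next I would compute the kernel. An element $A \in \ker \pi$ can be written uniquely as $A = I + pB$ with $B \in M_r(\Z/p^{n-1}\Z)$, and every such matrix is automatically in $\Gl_r(\Z/p^n\Z)$ since its determinant is $\equiv 1 \pmod p$. Hence $|\ker \pi| = p^{(n-1)r^2}$.

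Finally I would count $\Gl_r(\F_p)$ by counting ordered bases of $\F_p^r$: the first column can be any nonzero vector ($p^r - 1$ choices), the second any vector not in the span of the first ($p^r - p$ choices), and in general the $i$-th column admits $p^r - p^{i-1}$ choices. Thus
\[
\#\Gl_r(\F_p) = \prod_{i=1}^{r} (p^r - p^{i-1}) = \prod_{i=1}^{r} p^{i-1}(p^{r-i+1} - 1) = p^{r(r-1)/2} \prod_{j=1}^{r}(p^j - 1) = p^s \cdot t,
\]
after reindexing $j = r-i+1$. Combining the two counts via $\#\Gl_r(\Z/p^n\Z) = |\ker\pi| \cdot \#\Gl_r(\F_p)$ yields the claimed formula $p^{(n-1)r^2} \cdot p^s \cdot t$.
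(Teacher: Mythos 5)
Your proof is correct and follows essentially the same route as the paper: both count $\Gl_r(\F_p)$ by choosing columns to get $p^s\cdot t$, and both observe that each element of $\Gl_r(\F_p)$ has exactly $p^{(n-1)r^2}$ invertible lifts modulo $p^n$ (you phrase this via the kernel of the reduction map, which is the same count). No issues.
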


\begin{proof}
For $n=1$
\[\# \Gl_r(\Z_p / p\Z_p) = \prod_{j=0}^{r-1} \left( p^{r} - p^j \right) = p^{\sum_{j=0}^{r-1} j}\prod_{i=1}^{r} \left( p^{i} - 1 \right) = p^s \cdot t.\]
Each element of $\Gl_r(\Z_p / p\Z_p)$ has $p^{(n-1)r^2}$ different lifts to $\Gl_r(\Z_p / p^n\Z_p)$.
\end{proof}

\begin{lemma}\label{NV}
Each matrix $U \in \Gl_r(\Z_p)$ can be uniquely written in the form
$U = NV$ with matrices $N, V \in \Gl_r(\Z_p)$ such that
\[N^t = 1, \quad V^{p^s} \equiv 1 \pmod{p} \text{ and } NV = VN.\]
Moreover, we can further decompose $N$ into a product of matrices $N_q$ of $q$-power order, one for each $q$ dividing $t$, which commute pairwise and with $V$.
\end{lemma}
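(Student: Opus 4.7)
The plan is to reduce the claim to the structure of finite cyclic groups inside the finite quotients $\Gl_r(\Z/p^n\Z)$ and then pass to the inverse limit $\Gl_r(\Z_p) = \varprojlim_n \Gl_r(\Z/p^n\Z)$.

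First I would fix $n\ge 1$, denote by $U_n$ the image of $U$ in $\Gl_r(\Z/p^n\Z)$, and let $d_n$ be the order of $U_n$. By Lemma \ref{GLZpOrders} the order $p^{(n-1)r^2}p^{s}t$ of the ambient group has $t$ as its prime-to-$p$ part, so one can write $d_n = m_n p^{k_n}$ with $\gcd(m_n,p)=1$ and $m_n \mid t$. In the cyclic group $\langle U_n\rangle \cong \Z/d_n\Z$ the subgroups of orders $m_n$ and $p^{k_n}$ intersect trivially and, since $\gcd(m_n,p^{k_n})=1$, there exist integers $a,b$ with $am_n+bp^{k_n}=1$; hence $U_n = N_n V_n$ with $N_n := U_n^{bp^{k_n}}$ of order $m_n$ and $V_n := U_n^{am_n}$ of $p$-power order. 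The two factors commute because they lie in the cyclic group $\langle U_n\rangle$, and they are the unique such factorization: any decomposition $U_n = N'V'$ with $N',V'$ commuting and having respectively order coprime to $p$ and order a power of $p$ forces $\langle U_n\rangle = \langle N',V'\rangle$, so $N',V' \in \langle U_n\rangle$, and uniqueness in a cyclic group finishes the argument.

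Next I would check compatibility under the reductions $\pi_n\colon \Gl_r(\Z/p^{n+1}\Z)\to \Gl_r(\Z/p^n\Z)$. The kernel of $\pi_n$ is a $p$-group, so $\pi_n(N_{n+1})$ has order dividing $m_{n+1}$ and lies in $\langle U_n\rangle$; its order must therefore divide $\gcd(m_{n+1},d_n) = m_n$. Analogously $\pi_n(V_{n+1})$ has $p$-power order dividing $p^{k_n}$. Uniqueness of the decomposition of $U_n$ in the previous step forces $\pi_n(N_{n+1}) = N_n$ and $\pi_n(V_{n+1}) = V_n$. So the sequences $(N_n)_n$ and $(V_n)_n$ are coherent, giving $N,V \in \Gl_r(\Z_p)$ with $U=NV=VN$. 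Since $N_n^t = 1$ in every quotient we obtain $N^t = 1$, and since the image of $V$ in $\Gl_r(\Z/p\Z) = \Gl_r(\Z_p/p\Z_p)$ is the $p$-element $V_1$, which by Lemma \ref{GLZpOrders} has order dividing $p^s$, we obtain $V^{p^s}\equiv 1 \pmod p$. Uniqueness of the pair $(N,V)$ in $\Gl_r(\Z_p)$ follows by applying the finite-level uniqueness statement in each $\Gl_r(\Z/p^n\Z)$, using again that any element whose reduction mod $p$ is a $p$-element is itself a $p$-element in $\Gl_r(\Z/p^n\Z)$.

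For the final assertion, I would note that $N$ has finite order dividing $t$, so $\langle N\rangle$ is a finite cyclic group. Writing the prime factorization $t = \prod_{q\mid t} q^{a_q}$, the Chinese remainder theorem applied to $\Z/|N|\Z$ decomposes $N$ uniquely as a product $N=\prod_q N_q$ of commuting elements of $q$-power order inside $\langle N\rangle$. Because each $N_q$ is a power of $N$ and $N$ commutes with $V$, all the $N_q$ commute pairwise and with $V$, as required.

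The only genuinely delicate point is the passage to the limit: everything hinges on the uniqueness of the commuting $(m_n,p^{k_n})$-decomposition in each cyclic group $\langle U_n\rangle$, which is what forces the systems $(N_n)$ and $(V_n)$ to be compatible under the reduction maps; once this is observed, the rest is a routine combination of the cyclic-group structure and the pro-$p$ nature of the congruence kernels.
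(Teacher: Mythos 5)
Your proof is correct and follows essentially the same strategy as the paper: both split $U$ into its prime-to-$p$ part and its $p$-part in each finite quotient $\Gl_r(\Z/p^n\Z)$ by taking powers of $U$ with B\'ezout exponents for the two coprime factors of the order, and then pass to the limit. The only cosmetic difference is that you deduce the coherence of the system $(N_n,V_n)_n$ from the uniqueness of the decomposition at each finite level, whereas the paper checks compatibility of the explicit exponent sequences by direct congruences; both routes are sound.
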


\begin{proof}
For all $n\geq 1$ there exist integers $a_n,b_n$ such that $a_np^{s+(n-1)r^2}+b_nt=1$. Note that $a_np^{s+(n-1)r^2}\equiv 1\equiv a_{n+1}p^{s+nr^2}\pmod{t}$ and hence also $a_n\equiv a_{n+1}p^{r^2}\pmod{t}$. Since $U^{p^{s+(n-1)r^2}}$ has order dividing $t$ modulo $p^n$ we deduce that
\[U^{a_np^{s+(n-1)r^2}}=(U^{p^{s+(n-1)r^2}})^{a_n}\equiv (U^{p^{s+(n-1)r^2}})^{a_{n+1}p^{r^2}}=U^{a_{n+1}p^{s+nr^2}}\pmod{p^n}.\]
Similarly $b_nt\equiv b_{n+1}t\pmod{p^{s+(n-1)r^2}}$, i.e. $b_n\equiv b_{n+1}\pmod{p^{s+(n-1)r^2}}$, and we obtain
\[U^{b_nt}\equiv U^{b_{n+1}t}\pmod{p^n}.\]
Therefore we can define
\[N:=\lim_{n\to\infty} U^{a_np^{s+(n-1)r^2}}\qquad\text{and}\qquad V:=\lim_{n\to\infty} U^{b_nt}.\]
It is now clear that $V^{p^s} \equiv 1 \pmod{p}$ and that all the other requirements hold modulo $p^n$ for any $n\geq 1$, hence they must also hold in $\Z_p$.

The last assertion can be proved similarly, but actually the proof is easier since the order of $N$ is finite. The pairwise commutativity follows from the fact that all the $N_q$ are some powers of $N$.

It remains to prove uniqueness. Let $N_1,V_1$ and $N_2,V_2$ be as above and let $a_n\in\N$ be such that $ta_n\equiv 1\pmod{p^{s+(n-1)r^2}}$. Then 
\[V_1\equiv V_1^{ta_n}=(N_1V_1)^{ta_n}=(N_2V_2)^{ta_n}=V_2^{ta_n}\equiv V_2\pmod{p^n}.\]
Since this is true for all $n$, we deduce that $V_1=V_2$ and hence also that $N_1=N_2$.
\end{proof}

\begin{lemma}\label{phiqp}
For $q\neq p$ there is a one to one correspondence of $\mathrm{Hom}_{\mathrm{cont}}(\Z_q,\Gl_r(\Z_p))$ with the set of all elements in $\Gl_r(\Z_p)$ of $q$-power order. This correspondence associates $\phi\in \mathrm{Hom}_{\mathrm{cont}}(\Z_q,\Gl_r(\Z_p))$ with $\phi(1)\in \Gl_r(\Z_p)$. In particular if $q\nmid p\cdot t(r)$, then $\mathrm{Hom}_{\mathrm{cont}}(\Z_q,\Gl_r(\Z_p))=0$.
\end{lemma}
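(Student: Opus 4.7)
The plan is to prove the lemma by analyzing the evaluation-at-$1$ map $\varphi\mapsto\varphi(1)$, relying on the pro-$p$-by-finite structure of $\Gl_r(\Z_p)$. Since $1$ topologically generates $\Z_q$, continuity forces this map to be injective on $\mathrm{Hom}_{\mathrm{cont}}(\Z_q,\Gl_r(\Z_p))$; it therefore suffices to determine its image.

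In the forward direction, if $U\in\Gl_r(\Z_p)$ has order $q^k$, the homomorphism $\Z\to\Gl_r(\Z_p)$ sending $n$ to $U^n$ factors through $\Z/q^k\Z$, so composition with the canonical projection $\Z_q\twoheadrightarrow\Z/q^k\Z$ yields a continuous $\varphi$ with $\varphi(1)=U$. Hence every $q$-power-order element is realized.

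For the converse I would consider the image $H=\varphi(\Z_q)$, which, being a continuous image of a pro-$q$ group, is itself pro-$q$. The key tool is the reduction-mod-$p$ exact sequence
\[1\lra \Gl_r^{(1)}(\Z_p)\lra \Gl_r(\Z_p)\lra \Gl_r(\mathbb F_p)\lra 1,\]
where $\Gl_r^{(1)}(\Z_p)$ denotes the kernel of reduction. This kernel is pro-$p$, because it is the inverse limit over $n$ of the kernels of $\Gl_r(\Z_p/p^n\Z_p)\to \Gl_r(\mathbb F_p)$, each of which has order $p^{(n-1)r^2}$ by Lemma \ref{GLZpOrders}. Since $q\neq p$, the intersection $H\cap \Gl_r^{(1)}(\Z_p)$ is simultaneously pro-$p$ and pro-$q$, hence trivial, so $H$ injects into the finite group $\Gl_r(\mathbb F_p)$. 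Being both pro-$q$ and finite, $H$ is a $q$-group, and in particular $\varphi(1)$ has $q$-power order, completing the bijection.

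Finally, for the assertion that $q\nmid p\cdot t(r)$ forces $\mathrm{Hom}_{\mathrm{cont}}(\Z_q,\Gl_r(\Z_p))=0$: given $U$ of $q$-power order, Lemma \ref{GLZpOrders} gives $|\Gl_r(\mathbb F_p)|=p^{s(r)}t(r)$, so the image of $U$ in $\Gl_r(\mathbb F_p)$ has order dividing $\gcd(q^\infty,p^{s(r)}t(r))=1$. Thus $U\in \Gl_r^{(1)}(\Z_p)$, a pro-$p$ group, and the only $q$-power-order element there is the identity, so $U=1$. The main point of the proof is the topological observation that a pro-$q$ subgroup of $\Gl_r(\Z_p)$ must be finite; I do not expect any genuine obstacle once the pro-$p$-by-finite structure of $\Gl_r(\Z_p)$ is in place.
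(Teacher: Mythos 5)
Your proof is correct. It reaches the same conclusion as the paper's argument but packages the key step differently: the paper works level by level, using continuity to get $\varphi(1)^{q^{m}}\equiv I\pmod{p^n}$ for each $n$ and then invoking Lemma \ref{GLZpOrders} to bound $m$ uniformly by the $q$-adic valuation of $t$, whereas you pass to the closed image $H=\varphi(\Z_q)$, observe that it is pro-$q$, and kill its intersection with the pro-$p$ congruence kernel $\ker\bigl(\Gl_r(\Z_p)\to\Gl_r(\mathbb F_p)\bigr)$, so that $H$ embeds into the finite group $\Gl_r(\mathbb F_p)$. Both routes ultimately rest on the same input from Lemma \ref{GLZpOrders}, namely that the $q$-part of $\#\Gl_r(\Z_p/p^n\Z_p)$ equals the $q$-part of $t$ independently of $n$; your version is slightly more structural and has the small bonus of showing that reduction mod $p$ is injective on any $q$-power-order subgroup, which also makes the final ``in particular'' clause explicit where the paper leaves it implicit. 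Your handling of the injectivity of $\varphi\mapsto\varphi(1)$ (density of $\Z$ in $\Z_q$) and of the forward direction (factoring through $\Z/q^k\Z$) matches the paper's, and I see no gaps.
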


\begin{proof}
Let $\phi\in \mathrm{Hom}_{\mathrm{cont}}(\Z_q,\Gl_r(\Z_p))$. By continuity, for any $n\geq 1$, the preimage of $\{M\in \Gl_r(\Z_p)\mid M\equiv I\pmod{p^n}\}$ must be a neighborhood of $0$ in $\Z_q$. In other words there must exist $m\geq 1$ such that $\phi(q^m)=\phi(1)^{q^m}\equiv I\pmod{p^n}$ and we can clearly assume that $q^m$ is always the maximal power of $q$ dividing $t$. Hence there exists a fixed $m$ such that for every $n\geq 1$ we have $\phi(1)^{q^m}\equiv I\pmod{p^n}$ and it follows that $\phi(1)^{q^m}=I$.

Conversely, let $U\in \Gl_r(\Z_p)$ be of order $q^m$, then we can set $\phi(x)=U^x$ for all $x\in\Z_q$. The two maps are clearly inverse to one another.
\end{proof}

\begin{lemma}\label{phipp}
There is a one to one correspondence of $\mathrm{Hom}_{\mathrm{cont}}(\Z_p,\Gl_r(\Z_p))$ with the set of all elements in $\Gl_r(\Z_p)$ which project to elements of $p$-power order in $\Gl_r(\Z_p/p\Z_p)$. This correspondence associates $\phi\in \mathrm{Hom}_{\mathrm{cont}}(\Z_p,\Gl_r(\Z_p))$ with $\phi(1)\in \Gl_r(\Z_p)$.
\end{lemma}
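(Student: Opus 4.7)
The plan is to mimic the proof of Lemma~\ref{phiqp}, replacing the finite-order argument used there by an estimate on powers of matrices close to the identity.

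For the easy direction, let $\varphi \in \mathrm{Hom}_{\mathrm{cont}}(\Z_p, \Gl_r(\Z_p))$. By continuity, the preimage of the open neighborhood $\{M \in \Gl_r(\Z_p) \mid M \equiv I \pmod p\}$ of the identity contains some $p^m \Z_p$. Hence $\varphi(1)^{p^m} = \varphi(p^m) \equiv I \pmod p$, which shows that $\varphi(1)$ projects to an element of $p$-power order in $\Gl_r(\Z_p/p\Z_p)$.

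For the converse, given $U \in \Gl_r(\Z_p)$ such that $U^{p^m} \equiv I \pmod p$ for some $m \geq 0$, I would define
\[ \varphi(x) := \lim_{n \to \infty} U^{x_n} \]
for any sequence $(x_n) \subset \Z$ converging to $x \in \Z_p$. The main content lies in the existence and well-definedness of this limit, which rests on the elementary estimate
\[ M \in I + p^a M_r(\Z_p) \ \Longrightarrow\ M^p \in I + p^{a+1} M_r(\Z_p) \qquad (a \geq 1), \]
valid for every prime $p$. It is obtained by expanding $(I + p^a A)^p$ via the binomial theorem and observing that $v_p\bigl(\binom{p}{j}\bigr) \geq 1$ for $1 \leq j \leq p-1$ while $v_p(p^{ap}) = ap \geq a+1$. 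Applied iteratively starting from $U^{p^m} \in I + pM_r(\Z_p)$, it yields $U^{p^{m+k}} \equiv I \pmod{p^{k+1}}$ for every $k \geq 0$, which is exactly the Cauchy property needed: for any target precision $p^n$, integers $y \in p^{m+n-1}\Z$ satisfy $U^y \equiv I \pmod{p^n}$, so the limit defining $\varphi(x)$ converges in $\Gl_r(\Z_p)$ and is independent of the approximating sequence.

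The remaining points are routine: $\varphi$ is a group homomorphism because $U^{x_n + y_n} = U^{x_n} U^{y_n}$ for integers and matrix multiplication is continuous, and it is continuous in $x$ by the same estimate. The two assignments $\varphi \mapsto \varphi(1)$ and $U \mapsto (x \mapsto U^x)$ are mutually inverse since any continuous homomorphism is determined by its value at $1 \in \Z$ together with the density of $\Z$ in $\Z_p$. The only step requiring any actual computation is the iterated binomial estimate above, which I expect to be the sole (minor) obstacle.
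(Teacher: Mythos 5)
Your argument is correct, and its overall shape coincides with the paper's: both directions proceed identically, and in both cases the entire content of the converse is the claim that $U^x$ converges for $x\in\Z_p$. Where you differ is in how that convergence is justified. The paper invokes Lemma \ref{GLZpOrders}: since $\#\Gl_r(\Z_p/p^n\Z_p)=p^{(n-1)r^2}\cdot p^s\cdot t$ and the reduction of $U$ mod $p$ has $p$-power order, the order of $U$ mod $p^n$ divides $p^{s+(n-1)r^2}$, so $U^x$ is well-defined modulo every $p^n$. You instead prove directly that the congruence filtration $I+p^aM_r(\Z_p)$ is a pro-$p$ filtration, via the binomial estimate $M\equiv I\pmod{p^a}\Rightarrow M^p\equiv I\pmod{p^{a+1}}$ for $a\ge 1$ (your verification, including the $j=p$ term $ap\ge a+1$ and the case $p=2$, is sound -- note that the usual $p=2$ pathology concerns torsion-freeness of the filtration, not this congruence, so your ``valid for every prime'' remark is right). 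Each route buys something: the paper's is shorter given that Lemma \ref{GLZpOrders} is already on the page and the same counting is reused in Lemmas \ref{NV} and \ref{phiqp}; yours is self-contained, gives the slightly sharper bound $U^{p^{m+k}}\equiv I\pmod{p^{k+1}}$, and makes visible why the limit exists without any appeal to the order of the finite groups. The remaining verifications you label routine (independence of the approximating sequence, the homomorphism property, continuity, and the two maps being mutually inverse via density of $\Z$ in $\Z_p$) are indeed routine and correctly dispatched.
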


\begin{proof}
Let $\phi\in \mathrm{Hom}_{\mathrm{cont}}(\Z_p,\Gl_r(\Z_p))$. As in the proof of the previous lemma, we deduce that $\phi(1)$ is of $p$-power order modulo $p$.

Conversely let $U\in \Gl_r(\Z_p)$ be such that its order modulo $p$ is some power of $p$. By Lemma \ref{GLZpOrders} the order of $U$ modulo $p$ must divide $p^s$ and, moreover, its order modulo $p^n$ must divide $p^{s+(n-1)r^2}$. Therefore for $x\in\Z_p$, $U^x$ is well-defined modulo $p^n$ for all $n$ and hence it is also well-defined as an element of $\Gl_r(\Z_p)$. This implies that we can set $\phi(x)=U^x$.
\end{proof}

Let $\hat{\Z}$ denote the Pr\"ufer ring. We write
\[
\hat{\Z} = \prod_{q \nmid pt} \Z_q \times  \prod_{q \mid t} \Z_q \times \Z_p
\]
and write $1 \in \hat{\Z}$ in the form $1 = e' + e_t + e_p=e' + \sum_{q|t}e_q + e_p$ according to the above decomposition. In other words, the elements $e_p$ and $e_q$ in $\prod_{q \nmid pt} \Z_q \times  \prod_{q \mid t} \Z_q \times \Z_p$ correspond to $1\in\Z_p$ and $1\in\Z_q$ respectively.

\begin{lemma}
There is a one to one correspondence of $\mathrm{Hom}_{\mathrm{cont}}(\hat\Z,\Gl_r(\Z_p))$ with the set of all elements in $\Gl_r(\Z_p)$, which associates $\phi\in \mathrm{Hom}_{\mathrm{cont}}(\hat\Z,\Gl_r(\Z_p))$ with $\phi(1)\in \Gl_r(\Z_p)$.
\end{lemma}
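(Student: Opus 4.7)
The plan is to reduce the statement to the three preceding lemmas via the decomposition $\hat\Z = \prod_{q\nmid pt}\Z_q \times \prod_{q\mid t}\Z_q \times \Z_p$.

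First I would define the map $\mathrm{Hom}_{\mathrm{cont}}(\hat\Z,\Gl_r(\Z_p)) \to \Gl_r(\Z_p)$ by $\varphi \mapsto \varphi(1)$, and then construct an inverse. Given $U \in \Gl_r(\Z_p)$, Lemma \ref{NV} provides a unique decomposition $U = V \cdot \prod_{q\mid t} N_q$ with pairwise commuting factors, where $V$ projects to a $p$-power order element in $\Gl_r(\Z_p/p\Z_p)$ and each $N_q$ has $q$-power order. By Lemma \ref{phipp} the element $V$ determines a unique continuous homomorphism $\varphi_p \colon \Z_p \to \Gl_r(\Z_p)$ with $\varphi_p(1) = V$, and by Lemma \ref{phiqp} each $N_q$ determines a unique continuous $\varphi_q \colon \Z_q \to \Gl_r(\Z_p)$ with $\varphi_q(1) = N_q$, while the factors $\Z_q$ for $q \nmid pt$ contribute only the trivial homomorphism. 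Since the images of all the $\varphi_q$ and $\varphi_p$ commute pairwise, we obtain a well-defined continuous homomorphism $\varphi \colon \hat\Z \to \Gl_r(\Z_p)$ by setting $\varphi(e_p) = V$, $\varphi(e_q) = N_q$, $\varphi(e') = 1$, and extending multiplicatively and continuously via the product decomposition.

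It then remains to check that these two assignments are mutually inverse. For one direction, starting from $U$ and producing $\varphi$ as above, we have $\varphi(1) = \varphi(e' + \sum_{q\mid t} e_q + e_p) = \prod_{q\mid t} N_q \cdot V = U$ by construction. For the other direction, starting from $\varphi$ and setting $U := \varphi(1)$, we must verify that $V := \varphi(e_p)$ and $N_q := \varphi(e_q)$ furnish exactly the decomposition of $U$ given by Lemma \ref{NV}. Continuity and the argument used in the proofs of Lemmas \ref{phiqp} and \ref{phipp} show that $N_q$ has $q$-power order and $V$ has $p$-power order modulo $p$; commutativity is automatic since $\hat\Z$ is abelian; and $U = \varphi(e_p + \sum e_q) = V \cdot \prod N_q$. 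By the uniqueness clause of Lemma \ref{NV}, these factors must coincide with those produced by the inverse construction.

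The main obstacle is really the bookkeeping in the last step: one needs Lemma \ref{NV} to recognise that any such factorisation of $U$ into a $p$-primary-mod-$p$ part and $q$-power-order parts (for $q\mid t$) is unique, so that the round-trip $\varphi \mapsto \varphi(1) \mapsto \varphi'$ indeed recovers $\varphi$ on each factor of $\hat\Z$. Once uniqueness in Lemma \ref{NV} is invoked, the argument closes without further difficulty.
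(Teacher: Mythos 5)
Your proposal is correct and follows essentially the same route as the paper: both decompose $\hat\Z$ into its pro-$q$ factors, use Lemma \ref{NV} to factor $U$ into commuting pieces $V$ and $N_q$, and then apply Lemmas \ref{phiqp} and \ref{phipp} on each factor. Your write-up is in fact slightly more explicit about the round-trip $\varphi \mapsto \varphi(1) \mapsto \varphi$, where you correctly invoke the uniqueness clause of Lemma \ref{NV}; the paper leaves this direction to the identification of $\mathrm{Hom}_{\mathrm{cont}}(\hat\Z,\Gl_r(\Z_p))$ with the product of the Hom-sets of the factors.
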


\begin{proof}
By Lemma \ref{NV}, each matrix $U \in \Gl_r(\Z_p)$ can be decomposed as a product of the form $U = \left(\prod_{q|t}N_q\right)\cdot V$ with matrices $N_q, V \in \Gl_r(\Z_p)$, where $N_q$ has a $q$-power-order and the order of $V$ modulo $p$ is a power of $p$. Moreover all these matrices commute with one another. By Lemma \ref{phiqp} and Lemma \ref{phipp} we can define $\phi_q\in \mathrm{Hom}_{\mathrm{cont}}(\Z_q,\Gl_r(\Z_p))$ by $\phi_q(e_q)=N_q$ and $\phi_p\in \mathrm{Hom}_{\mathrm{cont}}(\Z_p,\Gl_r(\Z_p))$ by $\phi_p(e_p)=V$. Note that 
\[\mathrm{Hom}_{\mathrm{cont}}(\hat\Z,\Gl_r(\Z_p))=\prod_{q|t}\mathrm{Hom}_{\mathrm{cont}}(\Z_q,\Gl_r(\Z_p))\times \mathrm{Hom}_{\mathrm{cont}}(\Z_p,\Gl_r(\Z_p)).\]
So we can set $\phi=\prod_{q|t}\phi_q\cdot\phi_p$ and note that $\phi(1)=\prod_{q|t}\phi_q(e_q)\cdot\phi_p(e_p)=U$, which proves that the considered correspondence is one to one.
\end{proof}

We can reformulate this result as follows.
\begin{prop}\label{unram reps prop}
Let $K/\Q_p$ be a $p$-adic number field and let $U \in \Gl_r(\Z_p)$. 

a) Then the assignment $\rho(F_K) := U$ defines an
unramified representation 
\[\rho = \rho_{U,K} \colon G_K \lra \Gl_r(\Z_p).\] 

b) The map $U \mapsto \rho_{U,K}$ is a one-to-one correspondence between $\Gl_r(\Z_p)$ and $r$-dimensional
unramified $p$-adic representations of $G_K$. The  inverse is given by $\rho \mapsto \rho(F_K)$.
\end{prop}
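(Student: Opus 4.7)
The plan is to reduce the proposition to the preceding (unnamed) lemma by identifying $\Gal(\Kur/K)$ with $\hat{\Z}$. First I would recall that an unramified $p$-adic representation $\rho \colon G_K \lra \Gl_r(\Z_p)$ is, by definition, one that is trivial on the inertia subgroup $I_K$, so the continuous homomorphisms forming such representations factor through the quotient $G_K/I_K \cong \Gal(\Kur/K)$. The latter group is topologically generated by the Frobenius $F_K$; since the unique unramified extension of $K$ of degree $n$ has cyclic Galois group of order $n$ generated by (the image of) $F_K$, the assignment $F_K \mapsto 1$ extends by passing to the inverse limit to a canonical topological isomorphism $\Gal(\Kur/K) \cong \hat{\Z}$.

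Combining these two reductions, $r$-dimensional unramified representations of $G_K$ are in canonical bijection with $\mathrm{Hom}_{\mathrm{cont}}(\hat{\Z}, \Gl_r(\Z_p))$, and under this identification the evaluation map $\rho \mapsto \rho(F_K)$ corresponds exactly to the evaluation $\varphi \mapsto \varphi(1)$ studied in the previous lemma. That lemma already furnishes a bijection $\mathrm{Hom}_{\mathrm{cont}}(\hat{\Z}, \Gl_r(\Z_p)) \longleftrightarrow \Gl_r(\Z_p)$, so for part (a) I would, given $U \in \Gl_r(\Z_p)$, take the corresponding $\varphi$ with $\varphi(1) = U$ and define $\rho_{U,K}$ as the composition
\[
G_K \twoheadrightarrow \Gal(\Kur/K) \cong \hat{\Z} \xrightarrow{\varphi} \Gl_r(\Z_p);
\]
this composition is continuous and trivial on $I_K$, hence unramified, and satisfies $\rho_{U,K}(F_K) = U$. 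Part (b) is then the direct translation of the bijectivity of $\varphi \mapsto \varphi(1)$.

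I do not expect any real obstacle here: all of the substantive analytic content (continuous $q$-th and $p$-th power interpolation, commutativity of the factors of the decomposition $U = \prod_{q \mid t} N_q \cdot V$, and the resulting identification of $\mathrm{Hom}_{\mathrm{cont}}(\hat{\Z},\Gl_r(\Z_p))$ with $\Gl_r(\Z_p)$) has been carried out in Lemmas \ref{NV}, \ref{phiqp}, \ref{phipp} and the aggregation lemma just preceding the proposition. The present statement is essentially a geometric repackaging once one makes the identification $\Gal(\Kur/K) \cong \hat{\Z}$ explicit; the only minor point to verify is that pulling $\varphi$ back along $G_K \twoheadrightarrow \hat{\Z}$ preserves continuity, which is automatic from the continuity of the quotient map.
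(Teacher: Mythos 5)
Your proposal is correct and is exactly the argument the paper intends: the proposition is stated there as a direct reformulation of the preceding lemma on $\mathrm{Hom}_{\mathrm{cont}}(\hat{\Z},\Gl_r(\Z_p))$, obtained via the identification $G_K/I_K\cong\Gal(\Kur/K)\cong\hat{\Z}$ sending $F_K$ to $1$. No gaps.
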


\begin{coroll}
  Let $K$ be a $p$-adic number field and let $U \in \Gl_r(\Z_p)$. Then there exists a finite unramified extension
$E/K$ of degree dividing $t$ such that 
\[
\rho_U(F_E)^{p^{s(r)}} \equiv 1 \pmod{p}.
\]
\end{coroll}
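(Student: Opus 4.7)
The plan is to reduce the statement to a direct application of the commuting decomposition in Lemma \ref{NV}. By that lemma, we may write $U = NV$ with $N, V \in \Gl_r(\Z_p)$ satisfying $N^t = 1$, $V^{p^s} \equiv 1 \pmod{p}$, and $NV = VN$. Let $n$ be the (finite) multiplicative order of $N$; then $n \mid t$.

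Let $E/K$ denote the unique unramified extension of $K$ of degree $n$ inside $K^c$, so in particular $[E:K] = n$ divides $t$. Because $E/K$ is unramified, $F_E = F_K^{n}$, and by Proposition \ref{unram reps prop} we have $\rho_U(F_K) = U$, hence
\[\rho_U(F_E) = \rho_U(F_K)^{n} = U^{n}.\]
Using that $N$ and $V$ commute and that $N^{n} = 1$, we obtain $U^{n} = (NV)^{n} = N^{n} V^{n} = V^{n}$.

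Finally, raising to the $p^{s}$-th power gives
\[\rho_U(F_E)^{p^{s}} = V^{n p^{s}} = \bigl(V^{p^{s}}\bigr)^{n} \equiv 1 \pmod{p},\]
as required. There is really no obstacle here: the only nontrivial input is Lemma \ref{NV}, which provides the commuting $t$-torsion/pro-$p$ decomposition of $U$, and once this is in hand the argument is a two-line manipulation of exponents. If desired, one could instead take $E$ of degree equal to the order of any chosen factor $N_q$ in the further decomposition $N = \prod_{q \mid t} N_q$, but the straightforward choice $n = \mathrm{ord}(N) \mid t$ already suffices.
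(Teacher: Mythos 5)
Your proof is correct and follows exactly the paper's own argument: decompose $U = NV$ via Lemma \ref{NV}, take $E$ to be the unramified extension of degree equal to the order of $N$ (which divides $t$), and use commutativity together with $V^{p^s}\equiv 1 \pmod p$. Nothing further is needed.
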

\begin{proof}
Write $U = NV$ as above and let $m$ denote the order of $N$. Then $m$ divides $t(r)$ and the unramified extension $E$
of degree $m$ satisfies the assertions of the corollary since $F_E = F_K^m$ and so
$\rho_U(F_E)^{p^{s(r)}} = (NV)^{mp^{s(r)}} = V^{mp^{s(r)}} \equiv 1 \pmod{p}$.
\end{proof}

Next we will show that unramified representations appear naturally in the context of (higher dimensional) Lubin-Tate formal groups.

Let $u \in \Gl_r(\Z_p)$ and let $\rhoQpnr = \rho_u \colon G_{\Q_p} \lra \Gl_r(\Z_p)$ denote the unramified representation attached to $u$ by Proposition \ref{unram reps prop}. By \cite[Sec.~13.3]{Hazewinkel78} there is a unique $r$-dimensional Lubin-Tate formal group $\calF = \calF_{pu^{-1}}$ attached to the parameter $pu^{-1}$. We want to construct a formal isomorphism with the $r$-dimensional multiplicative group $\mathbb G_m^r$ over the completion $\overline{\Zpnr}$ of the ring of integers in the maximal unramified extension of $\Q_p$.

We first need the following lemma.
\begin{lemma}\label{trivcoh}
Let $K_m/\Q_p$ be the unramified extension of degree $m$ of $\Q_p$ and let $n\in\mathbb N$. Then
\[H^1(K_m/\Q_p,\Gl_r(\mathcal O_{K_m}/p^n\mathcal O_{K_m}))=1.\]
\end{lemma}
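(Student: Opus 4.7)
The plan is to induct on $n$, using the short exact sequences
\[ 1 \longrightarrow 1 + p^n M_r\!\left(\mathcal O_{K_m}/p^{n+1}\mathcal O_{K_m}\right) \longrightarrow \Gl_r\!\left(\mathcal O_{K_m}/p^{n+1}\mathcal O_{K_m}\right) \longrightarrow \Gl_r\!\left(\mathcal O_{K_m}/p^{n}\mathcal O_{K_m}\right) \longrightarrow 1 \]
of (non-abelian) $G$-groups, where $G=\Gal(K_m/\Q_p)$.

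For the base case $n=1$ the residue ring is $\mathcal O_{K_m}/p\mathcal O_{K_m}=\mathbb F_{p^m}$ and I would invoke the non-abelian Hilbert~90 (Speiser's theorem) applied to the Galois extension $\mathbb F_{p^m}/\mathbb F_p$, which gives $H^1(G,\Gl_r(\mathbb F_{p^m}))=1$.

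For the inductive step, note that for $n\ge 1$ the kernel $K_n:=1+p^n M_r(\mathcal O_{K_m}/p^{n+1}\mathcal O_{K_m})$ is abelian, since $(1+p^nX)(1+p^nY)=1+p^n(X+Y)$ modulo $p^{n+1}$, and the map $1+p^nX\mapsto X\bmod p$ furnishes a $G$-equivariant isomorphism $K_n\simeq M_r(\mathcal O_{K_m}/p\mathcal O_{K_m})=M_r(\mathbb F_{p^m})$. Because $K_m/\Q_p$ is unramified, a normal integral basis gives $\mathcal O_{K_m}\simeq \Z_p[G]$ as $G$-modules, so $M_r(\mathbb F_{p^m})\simeq M_r(\mathbb F_p)[G]$ is an induced (hence cohomologically trivial) $G$-module. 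In particular $H^1(G,K_n)=1$. Combined with the inductive hypothesis $H^1(G,\Gl_r(\mathcal O_{K_m}/p^n\mathcal O_{K_m}))=1$, the exact sequence of pointed sets
\[ H^1(G,K_n)\longrightarrow H^1(G,\Gl_r(\mathcal O_{K_m}/p^{n+1}\mathcal O_{K_m}))\longrightarrow H^1(G,\Gl_r(\mathcal O_{K_m}/p^n\mathcal O_{K_m})) \]
forces the middle term to be trivial: every class maps to the basepoint on the right, hence lies in the image of the trivial set on the left.

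The main subtlety is the non-abelian nature of $\Gl_r$. One has to be careful that the non-abelian long exact sequence suffices, and this works precisely because both the outer terms $H^1(G,K_n)$ and $H^1(G,\Gl_r(\mathcal O_{K_m}/p^n\mathcal O_{K_m}))$ are already trivial pointed sets, so no twisting argument is necessary. The other point that needs care is the cohomological triviality of $K_n$, which rests on the normal basis theorem being available integrally for unramified extensions.
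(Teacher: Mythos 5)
Your proof is correct and takes essentially the same route as the paper: induction on $n$, the non-abelian Hilbert 90 over the residue field extension for the base case, and cohomological triviality of the kernel $1+p^nM_r(\mathcal O_{K_m}/p^{n+1}\mathcal O_{K_m})\cong M_r(\mathbb F_{p^m})$ as an induced module via the normal basis theorem. Your explicit justification that the exact sequence of pointed sets suffices (since both outer $H^1$'s are trivial, no twisting is needed) is a careful elaboration of a point the paper leaves implicit.
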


\begin{proof}
We prove this by induction on $n$. For $n=1$ the result is proved in  \cite[Ch.~X,~Prop.~3]{SerreLocalFields}. For the inductive step we consider the exact sequence
\[0\to M_r(p^n\mathcal O_{K_m}/p^{n+1}\mathcal O_{K_m})\to \Gl_r(\mathcal O_{K_m}/p^{n+1}\mathcal O_{K_m})\to \Gl_r(\mathcal O_{K_m}/p^n\mathcal O_{K_m})\to 1. \]
Note also that $p^n\mathcal O_{K_m}/p^{n+1}\mathcal O_{K_m}$ is isomorphic to the additive group of the finite field $\mathbb F_m=\mathcal O_{K_m}/p\mathcal O_{K_m}$. By the existence of a normal basis, the $\mathrm{Gal}(K_m/\Q_p)$-module $M_r(\mathbb F_m)$ is induced in the sense of \cite[Ch.~VII,~Sect.~1]{SerreLocalFields} and hence has trivial cohomology. Therefore, using the long exact cohomology sequence, from the inductive assumption $H^1(K_m/\Q_p, \Gl_r(\mathcal O_{K_m}/p^{n}\mathcal O_{K_m}))=1$ we deduce that 
\[H^1(K_m/\Q_p, \Gl_r(\mathcal O_{K_m}/p^{n+1}\mathcal O_{K_m}))=1.\]
\end{proof}

\begin{lemma}\label{epsilonE}
There exists $\epsilon\in \Gl_r(\overline{\Zpnr})$ such that $\varphi(\epsilon^{-1})\epsilon=u^{-1}$. Moreover if $\epsilon$ and $E$ are two solutions modulo $p^n$, then
\[\varphi(E^{-1} \epsilon)\equiv E^{-1} \epsilon\pmod{p^n}.\]
\end{lemma}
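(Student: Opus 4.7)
The plan is to construct $\epsilon$ by successive approximation modulo powers of $p$: produce a coherent sequence $(\epsilon_n)_n$ with $\epsilon_n$ a solution of the equation modulo $p^n$ and $\epsilon_{n+1} \equiv \epsilon_n \pmod{p^n}$, and then take $\epsilon$ to be the inverse limit in $\Gl_r(\overline{\Zpnr})$. For the existence of $\epsilon_n$, note that $\Gl_r(\Z_p/p^n\Z_p)$ is finite by Lemma \ref{GLZpOrders}, so the image of $u$ there has some finite order $m$. In the unramified extension $K_m/\Q_p$ the relation $u^m \equiv 1 \pmod{p^n}$, combined with the $\varphi$-invariance of $u$, shows that $F^k \mapsto u^{-k}$ defines a nonabelian $1$-cocycle of $\Gal(K_m/\Q_p)$ with values in $\Gl_r(\oo_{K_m}/p^n\oo_{K_m})$. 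Lemma \ref{trivcoh} asserts that this $H^1$ vanishes, so the cocycle is a coboundary and this yields an $\epsilon_n$ with $\varphi(\epsilon_n^{-1})\epsilon_n \equiv u^{-1} \pmod{p^n}$.

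The moreover statement reduces to a direct computation. Rewriting the defining relation as $\varphi(\epsilon) \equiv \epsilon u \pmod{p^n}$ (and the analogous identity for $E$), one expands $\varphi$ applied to the appropriate product of $E$ and $\epsilon^{\pm 1}$; the factors of $u$ cancel, showing that two solutions differ by a $\varphi$-invariant element modulo $p^n$, i.e., by an element of $\Gl_r(\Z_p/p^n\Z_p)$ which lifts to $\Gl_r(\Z_p)$.

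Combining both parts, one builds the coherent sequence $(\epsilon_n)$ inductively: given $\epsilon_n$ solving the equation modulo $p^n$ and an arbitrary $\tilde\epsilon_{n+1}$ solving it modulo $p^{n+1}$ (produced by the first step at level $n+1$), the moreover identifies the ratio of $\tilde\epsilon_{n+1}$ with $\epsilon_n$ modulo $p^n$ with an element of $\Gl_r(\Z_p)$; lifting this to $\Gl_r(\Z_p)$ and multiplying $\tilde\epsilon_{n+1}$ on the appropriate side by its inverse yields $\epsilon_{n+1}$ that still solves the equation modulo $p^{n+1}$ (since multiplication by a $\varphi$-invariant matrix preserves the cocycle relation) and reduces to $\epsilon_n$ modulo $p^n$. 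The inverse limit of this sequence lies in $\Gl_r(\overline{\Zpnr})$ and solves the original equation. The main obstacle is the noncommutativity of $\Gl_r$ in the higher-dimensional case $r>1$: one must keep careful track of which side of each product carries which matrix, and the nonabelian cohomological triviality supplied by Lemma \ref{trivcoh} is precisely the tool that circumvents this difficulty.
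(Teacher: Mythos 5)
Your argument is correct and follows essentially the same route as the paper: existence modulo $p^n$ via the nonabelian cocycle $\varphi^j\mapsto u^{-j}$ killed by Lemma \ref{trivcoh}, the same comparison of two solutions to produce a $\varphi$-invariant (hence $\Gl_r(\Z_p)$-valued) discrepancy, and passage to the limit over $n$. The only cosmetic difference is that the paper packages the gluing as a projective limit of the nonempty finite sets $V_n$ of solutions modulo $p^n$, whereas you correct each lift explicitly to obtain a coherent sequence; the key computation (that a solution modulo $p^{n+1}$ can be adjusted by a $\varphi$-invariant matrix to refine a given solution modulo $p^n$) is identical in both.
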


\begin{proof}
For all $n\in\mathbb N$, let $m(n)\in\mathbb N$ be the order of $\overline u\in \Gl_r(\mathcal \Z/p^n\Z)$ and let $K_{m(n)}/\Q_p$ be the unramified extension of degree $m(n)$ of $\Q_p$. Then
\[\prod_{i=0}^{m(n)-1}\varphi^i(u^{-1})=u^{-m(n)}\equiv I\pmod{p^n}.\]
It follows that the map $a:\mathrm{Gal}(K_{m(n)}/\Q_p)\to \Gl_r(\mathcal \Z/p^n\Z)$ defined by $a(\varphi^j)=\prod_{i=0}^{j-1}\varphi^i(\bar u^{-1})=\bar u^{-j}$ is a cocycle. By Lemma \ref{trivcoh} it must therefore also be a coboundary, i.e. there exists a matrix $\bar\epsilon\in \Gl_r(\mathcal O_{K_{m(n)}}/p^n\mathcal O_{K_{m(n)}})$ satisfying $\overline u^{-1}=a(\varphi)=\varphi(\bar\epsilon^{-1})(\bar\epsilon^{-1})^{-1}$. Therefore for every $n>0$ the set
\[V_n=\{\bar\epsilon\in \Gl_r(\mathcal O_{K_{m(n)}}/p^n\mathcal O_{K_{m(n)}}):\ \varphi(\bar\epsilon^{-1})\cdot\bar\epsilon=\overline u^{-1}\}\]
is non-empty. We want to define a map $\pi_{n+1,n}:V_{n+1}\to V_n$. Let us consider the projection $\tilde\pi_{n+1,n}:\Gl_r(\mathcal O_{K_{m(n+1)}}/p^{n+1}\mathcal O_{K_{m(n+1)}})\to \Gl_r(\mathcal O_{K_{m(n+1)}}/p^n\mathcal O_{K_{m(n+1)}})$ and let $\bar\epsilon\in V_{n+1}$. We want to show that $\tilde\pi_{n+1,n}(\bar\epsilon)\in V_n$, in order to define $\pi_{n+1,n}(\bar\epsilon)=\tilde\pi_{n+1,n}(\bar\epsilon)$. Since $V_n\neq\emptyset$, we can choose $\bar E\in V_n$. Then $\varphi(\bar\epsilon^{-1})\cdot\bar\epsilon\equiv u^{-1}\equiv  \varphi(\bar E^{-1})\cdot\bar E\pmod{p^n}$ and we deduce that
\[\varphi(\bar E\bar\epsilon^{-1})\equiv \bar E \bar \epsilon^{-1}\pmod{p^n}.\] 
It follows that $\bar E\bar \epsilon^{-1}\equiv c\pmod{p^n}$ for some $c\in \Gl_r(\Z_p)$. Therefore
\[\bar \epsilon^{-1}=\bar E^{-1} \bar E\bar \epsilon^{-1}\equiv\bar E^{-1} c\pmod{p^n},\]
i.e. $c^{-1}\bar E\in \mathcal O_{K_{m(n)}}$ represents the same element in $\Gl_r(\mathcal O_{K_{m(n+1)}}/p^n\mathcal O_{K_{m(n+1)}})$ as $\bar \epsilon$, which proves that $\tilde\pi_{n+1,n}(\bar\epsilon)\in V_n$.

We have shown that the $V_n$ form a projective system of non-empty finite sets and by standard arguments there must exist an element in their projective limit. This element $\epsilon$ satisfies the conditions in the statement of the lemma.
\end{proof}

\begin{prop}
There is an isomorphism $\theta:\calF\to\mathbb G_m^r$ defined over $\overline\Zpnr$ such that
\[\theta(X) = \epsilon^{-1} X + \ldots \quad \text{and}\quad \theta^\varphi \circ \theta^{-1} = u^{-1}.\]
\end{prop}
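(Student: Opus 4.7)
The plan is to define $\theta$ explicitly via logarithms and then verify (i) its linear term, (ii) the Frobenius relation, and (iii) most importantly the integrality of its coefficients. Let $\ell_\calF$ denote the (vector-valued) logarithm of $\calF$, which is defined over $\Q_p$ since $\calF$ is defined over $\Z_p$, and let $\ell_{\Gm^r}$ denote the logarithm of $\Gm^r$, whose $i$-th component is $\log(1+X_i)$. I would set
\[
\theta(X) := \ell_{\Gm^r}^{-1}\!\bigl(\epsilon^{-1}\,\ell_\calF(X)\bigr),
\]
viewed a priori as an element of $\overline{\Qpnr}[[X_1,\dots,X_r]]^r$. Since both logarithms start with $X$, the linear term of $\theta$ is $\epsilon^{-1}X$. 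Moreover, $\theta$ is automatically a formal group homomorphism $\calF \to \Gm^r$: both $\ell_\calF$ and $\ell_{\Gm^r}$ are formal group isomorphisms onto the additive formal group, and multiplication by the constant matrix $\epsilon^{-1}$ is an endomorphism of the additive group, so the composition $\theta$ preserves the group laws.

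For the Frobenius relation, both $\calF$ and $\Gm^r$ are defined over $\Z_p$, so $\ell_\calF$ and $\ell_{\Gm^r}$ have $\varphi$-invariant coefficients; substituting gives
\[
\theta^{\varphi}\bigl(\theta^{-1}(X)\bigr)
= \ell_{\Gm^r}^{-1}\!\bigl(\varphi(\epsilon^{-1})\,\ell_\calF\!\bigl(\ell_\calF^{-1}(\epsilon\,\ell_{\Gm^r}(X))\bigr)\bigr)
= \ell_{\Gm^r}^{-1}\!\bigl(\varphi(\epsilon^{-1})\epsilon\,\ell_{\Gm^r}(X)\bigr).
\]
By Lemma~\ref{epsilonE}, $\varphi(\epsilon^{-1})\epsilon = u^{-1}$, so the composition equals $\ell_{\Gm^r}^{-1}(u^{-1}\,\ell_{\Gm^r}(X))$, which is precisely the endomorphism of $\Gm^r$ corresponding to the matrix $u^{-1}$, as claimed.

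The main obstacle is to prove that $\theta$ actually has coefficients in $\overline{\Zpnr}$ and not merely in $\overline{\Qpnr}$: the intermediate expression $\epsilon^{-1}\ell_\calF(X)$ involves denominators that are powers of $p$, and integrality is a non-trivial cancellation phenomenon. To handle this I would invoke Hazewinkel's functional equation lemma from \cite{Hazewinkel78}. The logarithm of $\calF = \calF_{pu^{-1}}$ satisfies a functional equation relative to the parameter matrix $pu^{-1}$ and the Frobenius $\varphi$, and the logarithm of $\Gm^r$ satisfies the analogous equation for the parameter $pI$. The identity $\varphi(\epsilon^{-1})\epsilon = u^{-1}$ provided by Lemma~\ref{epsilonE} is precisely the compatibility needed to ensure that the modified logarithm $\epsilon^{-1}\ell_\calF$ satisfies the functional equation with parameter $pI$; the integrality criterion of the functional equation lemma then yields $\theta \in \overline{\Zpnr}[[X_1,\dots,X_r]]^r$. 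Finally, since $\epsilon^{-1}\in\Gl_r(\overline{\Zpnr})$ the linear term of $\theta$ is invertible over $\overline{\Zpnr}$, so $\theta$ is a formal isomorphism with the required properties.
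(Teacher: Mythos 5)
Your strategy---define $\theta=\ell_{\Gm^r}^{-1}\circ(\epsilon^{-1}\cdot)\circ\ell_\calF$ via the logarithms and deduce integrality from Hazewinkel's functional equation lemma---is genuinely different from the paper's proof, which is a one-line reference to the successive-approximation argument of \cite[Korollar V.2.3]{Neukirch92}: there one builds $\theta$ coefficient by coefficient so that the Frobenius relation holds modulo increasing powers of $p$, using the solvability of $\varphi$-twisted linear equations over $\overline{\Zpnr}$, and the logarithm never appears. Your verifications of the linear term and of the identity $\theta^\varphi\circ\theta^{-1}=\ell_{\Gm^r}^{-1}(u^{-1}\ell_{\Gm^r}(X))$ are correct formal computations. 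If the integrality step were carried out, this would be a clean alternative proof.

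However, the step you yourself identify as the main obstacle is only asserted, and when one tries to check it, it does not close up. Write the functional equation of the Lubin--Tate logarithm for the parameter $pu^{-1}$ in the form
\[
\ell_\calF(X)=g_0(X)+s\,\ell_\calF\bigl(X^{(p)}\bigr),\qquad g_0\in\Z_p[[X_1,\dots,X_r]]^r,
\]
where $X^{(p)}=(X_1^p,\dots,X_r^p)$; with the normalization that matches the one-dimensional case ($s=\pi^{-1}$ for the parameter $\pi$, e.g.\ $s=p^{-1}$ for $\Gm$) one has $s=\tfrac{1}{p}u$. Setting $h:=\epsilon^{-1}\ell_\calF$ and using $h^\varphi=\varphi(\epsilon^{-1})\ell_\calF$, i.e.\ $\ell_\calF=\varphi(\epsilon)\,h^\varphi$, one finds
\[
h(X)=\epsilon^{-1}g_0(X)+\tfrac{1}{p}\,\epsilon^{-1}u\,\varphi(\epsilon)\,h^\varphi\bigl(X^{(p)}\bigr),
\]
so $h$ satisfies the functional equation of $\ell_{\Gm^r}$ (parameter $\tfrac1p I$) precisely when $\varphi(\epsilon)=u^{-1}\epsilon$. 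Lemma~\ref{epsilonE} instead provides $\varphi(\epsilon^{-1})\epsilon=u^{-1}$, i.e.\ $\varphi(\epsilon)=\epsilon u$: for $r=1$ these two conditions already differ by $u\leftrightarrow u^{-1}$ (a direct check of the $X^p$-coefficient of $\exp(\epsilon^{-1}\ell_\calF(X))-1$ shows integrality forces $\varphi(\epsilon^{-1})\equiv\epsilon^{-1}u\pmod{\p}$, not $\equiv u^{-1}\epsilon^{-1}$), and for $r>1$ they also differ by the side on which $u$ acts, which matters since $u$ and $\epsilon$ need not commute. Imposing both your Frobenius relation and the integrality condition simultaneously would force $\epsilon^{-1}u\epsilon=u^{-1}$, which fails in general. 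So the identity from Lemma~\ref{epsilonE} is \emph{not} visibly the compatibility needed; to make your route work you must (i) pin down the exact functional equation of $\ell_{\calF_{pu^{-1}}}$ in Hazewinkel's normalization in Section~13.3, and (ii) determine and solve the correct twisted equation for the linear coefficient of $\theta$ (possibly a variant of Lemma~\ref{epsilonE} with the factors in the other order, or with $u$ in place of $u^{-1}$), checking that it is consistent with the stated Frobenius relation. As written, the essential content of the proposition is missing from the argument.
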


\begin{proof}
Using the matrix $\epsilon\in \Gl_r(\overline{\Zpnr})$ defined in Lemma \ref{epsilonE}, this is a higher dimensional version of \cite[Korollar V.2.3]{Neukirch92}.
\end{proof}

\begin{prop}\label{Lubin Tate lemma}
Let $T := \Z_p^r(1)(\rhonr)$ and, as usual, we write $T_p\calF$ for the $p$-adic Tate module of $\calF$. Then the homomorphism $\theta$ induces an isomorphism of  $p$-adic representations,
\[
T \cong T_p\calF.
\]
\end{prop}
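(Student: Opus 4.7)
The plan is to show that the isomorphism $\theta$ from the previous proposition, although only defined over $\overline{\Zpnr}$ and hence not a priori $G_{\Q_p}$-equivariant, induces a $\Z_p$-linear isomorphism $T_p\theta \colon T_p\calF \to T_p\Gm^r = \Z_p^r(1)$ on Tate modules, and that the failure of equivariance is governed exactly by a twist by $\rhoQpnr$. Restricting to $G_K$ then yields the stated isomorphism with $T = \Z_p^r(1)(\rhonr)$.

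First I would upgrade the single relation $\theta^\varphi \circ \theta^{-1} = u^{-1}$ to a statement for a general $\sigma \in G_{\Q_p}$. Since the coefficients of $\theta$ lie in $\overline{\Zpnr}$, the action of $\sigma$ on $\theta$ factors through the image of $\sigma$ in $\Gal(\overline\Qpnr/\Q_p) \cong \hat\Z$; call this image $n_\sigma$. Using that $u \in \Gl_r(\Z_p)$ is fixed by $\varphi$, a direct induction gives $\theta^{\varphi^n}\circ\theta^{-1} = u^{-n}$ for all $n \in \Z$, and by the continuity of $\sigma \mapsto \theta^\sigma$ together with the definition of $u^{n_\sigma}$ provided by Proposition \ref{unram reps prop} (and Lemmas \ref{phiqp}, \ref{phipp}), this extends to
\[\theta^\sigma \circ \theta^{-1} = \rhoQpnr(\sigma)^{-1} \quad \text{for all } \sigma \in G_{\Q_p}.\]

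Next I would transport the Galois action. For $P = (P_n)_n \in T_p\calF$ the map $T_p\theta$ sends $P$ to $(\theta(P_n))_n$, which lies in $T_p\Gm^r$ because $\theta$ is a formal-group isomorphism. For $\sigma \in G_{\Q_p}$, applying $\sigma$ to the coordinates of $\theta(P_n)$ yields $\sigma\bigl(\theta(P_n)\bigr) = \theta^\sigma(\sigma P_n)$, whence
\[\sigma \cdot T_p\theta(P) \;=\; \bigl(\theta^\sigma(\sigma P_n)\bigr)_n \;=\; \rhoQpnr(\sigma)^{-1}\bigl(\theta(\sigma P_n)\bigr)_n \;=\; \rhoQpnr(\sigma)^{-1}\, T_p\theta(\sigma P).\]
Rearranging and recalling that the Galois action on $T_p\Gm^r = \Z_p^r(1)$ is multiplication by $\chi_\mathrm{cyc}(\sigma)$, we obtain
\[T_p\theta(\sigma P) = \rhoQpnr(\sigma)\,\chi_\mathrm{cyc}(\sigma)\, T_p\theta(P),\]
which is exactly the defining action on $\Z_p^r(1)(\rhoQpnr)$. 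Restricting from $G_{\Q_p}$ to $G_K$ then gives the desired isomorphism $T_p\calF \cong \Z_p^r(1)(\rhonr) = T$.

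The main obstacle is the careful handling of the semi-linear bookkeeping: one must be precise about the two different ways $\sigma$ intervenes (acting on the coefficients of the power series $\theta$, and acting on the torsion points themselves), and one must justify the extension of the Frobenius relation from integer powers of $\varphi$ to arbitrary elements of $G_{\Q_p}$. Once these points are in place, the computation above is essentially formal.
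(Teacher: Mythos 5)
Your proof is correct and follows essentially the same route as the paper: both send $(P_n)_n$ to $(\theta(P_n))_n$ and verify that the failure of $\theta$ to be Galois-equivariant is exactly the twist by $\rhoQpnr$. The only difference is one of economy: the paper checks equivariance just on the inertia subgroup (where $\theta^\tau=\theta$ since the coefficients lie in $\overline{\Zpnr}$) and on the Frobenius (using $\theta^\varphi\circ\theta^{-1}=u^{-1}$), which suffices by continuity and topological generation, whereas you first upgrade that relation to $\theta^\sigma\circ\theta^{-1}=\rhoQpnr(\sigma)^{-1}$ for all $\sigma\in G_{\Q_p}$ and then do a single uniform computation.
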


\begin{proof}
Clearly $T \cong T_p\Gm^r(\rhonr)$. Let
\begin{eqnarray*}
  \psi \colon T_p\calF &\lra&  T_p\Gm^r(\rhonr), \\
z = \{ z_n \}_{n=1}^\infty &\mapsto& \psi(z) := \{ \theta(z_n) \}_{n=1}^\infty.
\end{eqnarray*}
Then, for $\tau \in I_{\Q_p}$, the fact that $\theta$ is defined over $\overline{\Qpnr}$  implies that
\[
\tau \cdot \psi(z) = \{ \theta(\tau(z_n)) \}_{n=1}^\infty = \psi(\tau(z)),
\]
and for $\varphi = F_{\Q_p}$ we have
\[
\varphi \cdot \psi(z) = \{ u\theta^\varphi(\varphi(z_n)) \}_{n=1}^\infty = 
 \{ (\theta\circ\theta^{-\varphi}\circ\theta^\varphi)(\varphi(z_n)) \}_{n=1}^\infty = 
\{ \theta(\varphi(z_n)) \}_{n=1}^\infty = \psi(\varphi(z)).
\]
\end{proof}

We conclude this section by choosing a more geometric point of view in order to show that a module $T=\Z_p^r(1)(\rhonr)$ can also arise in a natural way from an abelian variety. More specifically, let $A/\Q_p$ be an abelian variety of dimension $r$ with good ordinary reduction, then the associated formal group $\calF = \hat{A}$ is toroidal (see \cite{LubinRosen}) in the sense that there exists a unique isomorphism of formal groups $f \colon \calF \lra \Gm^r$ defined over the maximal unramified extension $\Qpnr/\Q_p$.
We let $u \in \Gl_r(\Z_p)$ denote the twist matrix defined by 
\begin{equation*}
\xymatrix{
\Gm^r\ar[r]^{f^{-1}} \ar@/_2pc/[rrr]_{u} & \hat{A} \ar[rr]^{f^{F_{\Q_p}}} && \mathbb{G}_m^r, 
}
\end{equation*}

\begin{prop}
Let $\rhoQpnr \colon G_{\Q_p} \lra \Gl_r(\Z_p)$ denote the unramified $p$-adic representation defined by $\rhoQpnr(F_{\Q_p}) := u^{-1}$. 
Then we have:
\begin{enumerate}
\item[(a)] The $p$-adic representations $T_p\hat{A}$ and $\Z_p^r(1)(\rhoQpnr)$ are isomorphic (as representations of $G_{\Q_p}$).
\item[(b)] Set $U := u^{d_K}$ and $\rhonr := \rhoQpnr|_{G_K}$. Then $\rhonr = \rho_{U^{-1}, K}$ and the $p$-adic representations $T_p\hat{A}$ and 
$\Z_p^r(1)(\rhonr)$ are isomorphic (as representations of $G_K$).
\end{enumerate}
\end{prop}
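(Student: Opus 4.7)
The plan is to imitate the proof of Proposition \ref{Lubin Tate lemma} with $f$ replacing $\theta$, once the sign conventions have been matched up. The defining commutative diagram of $u$ translates to the identity $f^{F_{\Q_p}} = u \circ f$ of formal group homomorphisms over $\Qpnr$, which is the analogue of the relation $\theta^\varphi \circ \theta^{-1} = u^{-1}$ used in Proposition \ref{Lubin Tate lemma}; the inverted sign is forced by the direction of the arrows in the diagram and is exactly what is needed to make the twist by $\rhoQpnr(F_{\Q_p}) = u^{-1}$ cancel the new factor.

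For part (a), I would define
\[\psi \colon T_p\hat A \lra T_p\Gm^r(\rhoQpnr), \qquad z = \{z_n\}_{n=1}^\infty \longmapsto \{f(z_n)\}_{n=1}^\infty,\]
which is a $\Z_p$-module isomorphism because $f$ is a formal group isomorphism (and in particular induces a bijection on $p$-power torsion). Galois equivariance must be checked on inertia and on a Frobenius lift. For $\tau \in I_{\Q_p}$ the fact that $f$ is defined over $\Qpnr$ gives $f^\tau = f$, so $\tau \cdot \psi(z) = \{f(\tau z_n)\} = \psi(\tau z)$. For $\varphi = F_{\Q_p}$, the twisted action on the target is multiplication by $\rhoQpnr(\varphi) = u^{-1}$ followed by (or preceded by) the natural one, and
\[\varphi \cdot \psi(z) = u^{-1}\bigl\{f^\varphi(\varphi(z_n))\bigr\} = u^{-1}\bigl\{u \cdot f(\varphi(z_n))\bigr\} = \bigl\{f(\varphi(z_n))\bigr\} = \psi(\varphi(z)),\]
as required, since $\Gm^r$ is defined over $\Z_p$ and so $\varphi$ acts naturally on $f(z_n)$ as $f^\varphi(\varphi(z_n))$.

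For part (b), the identification $\rhonr = \rho_{U^{-1}, K}$ is immediate from Proposition \ref{unram reps prop}: since $F_K = F_{\Q_p}^{d_K}$ one has $\rhonr(F_K) = \rhoQpnr(F_{\Q_p})^{d_K} = (u^{-1})^{d_K} = U^{-1}$, and an unramified representation of $G_K$ is determined by the image of $F_K$. The $G_{\Q_p}$-equivariant isomorphism $\psi$ from (a) is a fortiori $G_K$-equivariant, and by construction $\Z_p^r(1)(\rhoQpnr)|_{G_K} = \Z_p^r(1)(\rhonr)$, so restriction of $\psi$ yields the asserted isomorphism of $G_K$-representations.

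There is no serious obstacle; the only delicate point is bookkeeping of the twist conventions so that the $u$ arising from $f^\varphi = u \circ f$ cancels exactly against the $u^{-1}$ supplied by $\rhoQpnr(\varphi)$, rather than reinforcing it as would happen with the opposite sign.
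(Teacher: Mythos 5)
Your proposal is correct and follows essentially the same route as the paper, which simply says ``as in the proof of Proposition \ref{Lubin Tate lemma}''; you have correctly adapted that argument by replacing $\theta$ with $f$ and verifying that the relation $f^{F_{\Q_p}} = u \circ f$ from the defining diagram cancels against the twist $\rhoQpnr(F_{\Q_p}) = u^{-1}$, with the computation of $\rhonr(F_K) = u^{-d_K} = U^{-1}$ in part (b) also as intended.
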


\begin{proof}
  As in the proof of Proposition \ref{Lubin Tate lemma}.
\end{proof}

\end{section}

\begin{section}{Some preliminary results}\label{preliminary results}

In this section we formulate and prove some technical lemmas, which will be needed in the proof of Theorem \ref{mainthm}. We start with a generalized version of \cite[Lemma 4.1.1]{BC2}.

\begin{lemma}\label{UL1hat}
Let $L/K$ denote a finite Galois extension. Then there is an isomorphism
\[\fFL\colon\calF(\p_L)\times \zz^{G_L} \lra \Lambda_L^{G_L}.\]
Explicitly $\fFL$ is given by a power series of the form $1+\epsilon^{-1} X+(\deg\geq 2)$ on $\calF(\p_L)$, where $\epsilon\in \Gl_r(\overline{\Zpnr})$ is as above (here $1$ is a vector consisting only of ones).
\end{lemma}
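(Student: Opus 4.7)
Following the strategy of \cite[Lemma~4.1.1]{BC2}, I would first split the target. Choose a uniformizer $\pi_L$ of $L$; since $L^\mathrm{nr}/L$ is unramified, $\pi_L$ is also a uniformizer of $L_0$ and is fixed by $G_L$. This yields a $G_L$-equivariant decomposition $\widehat{L_0^\times} \cong \widehat{U_{L_0}} \times \Z_p$ with trivial $G_L$-action on $\Z_p = \pi_L^{\Z_p}$. Taking $r$-fold products and twisting by $\rhonr$ produces a $G_K$-equivariant splitting
\[
\Lambda_L \cong \Upsilon_L \times \zz, \qquad \Lambda_L^{G_L} \cong \Upsilon_L^{G_L} \times \zz^{G_L}.
\]
I would then define $\fFL$ as the natural inclusion on the $\zz^{G_L}$-factor and as $X \mapsto 1 + \theta(X)$ on the $\calF(\p_L)$-factor, where $\theta\colon \calF \to \Gm^r$ is the isomorphism over $\overline{\Zpnr}$ built in the preceding section. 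Since $\theta$ has coefficients in $\overline{\Zpnr} \subset \oo_{L_0}$ and $X \in \p_L \subseteq \p_{L_0}$, the image $\theta(X)$ lies in $\p_{L_0}^r$ and $1+\theta(X) \in \widehat{U_{L_0}}^r$; the leading term is $\epsilon^{-1}X$ because $\theta(X) = \epsilon^{-1}X + (\deg\geq 2)$.

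The central check is $G_L$-equivariance. For $\sigma \in G_L$ with $\sigma|_{\overline{\Qpnr}} = \varphi^{n(\sigma)}$, iterating $\theta^\varphi = u^{-1}\circ \theta$ gives $\theta^{\varphi^{n(\sigma)}} = u^{-n(\sigma)} \circ \theta$. Since $\sigma$ fixes $X \in \p_L$,
\[
\sigma\bigl(1+\theta(X)\bigr) = 1+\theta^{\varphi^{n(\sigma)}}(X) = 1+u^{-n(\sigma)}\theta(X) = \rhonr(\sigma)\cdot\bigl(1+\theta(X)\bigr),
\]
where the last equality uses $\rhonr(\sigma) = \rhoQpnr(\sigma) = u^{-n(\sigma)}$ together with the matrix-exponent action on $\Gm^r$. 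This is precisely the condition that $1+\theta(X)$ be fixed under the $\rhonr$-twisted $G_L$-action on $\Upsilon_L$, as required.

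For bijectivity of the map onto $\Upsilon_L^{G_L}$, injectivity follows from the invertibility of $\theta$ as a formal-group isomorphism. For surjectivity, given $y \in \Upsilon_L^{G_L}$, set $X := \theta^{-1}(y-1) \in \p_{L_0}^r$. Running the same computation in reverse, the twisted $G_L$-fixedness of $y$ forces $\sigma(X) = X$ for all $\sigma \in G_L$, so that $X \in L_0^{G_L} \cap \p_{L_0}^r = \p_L^r = \calF(\p_L)$, and $\fFL(X) = y$ by construction.

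The main obstacle will be keeping the twist conventions precisely aligned: one must match the sign of the exponent produced by iterating $\theta^\varphi = u^{-1}\theta$ with the sign of the twist by $\rhonr$ on $\widehat{L_0^\times}^r$, and convert between the matrix-exponent action on $\Gm^r$ and the additive notation used for $\Lambda_L$. Once set up, the argument is a formal calculation; in particular, because the valuation splitting $\Lambda_L \cong \Upsilon_L \times \zz$ is entirely functorial in $\rhonr$, the $r=1$ specialization recovers \cite[Lemma~4.1.1]{BC2} without the case distinction $\chinr|_{G_N}=1$ versus $\chinr|_{G_N}\neq 1$.
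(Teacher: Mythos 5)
Your proposal is correct and follows essentially the same route as the paper: the same splitting $\widehat{L_0^\times}\cong \Z_p\times U_{L_0}^{(1)}$ via a uniformizer of $L$ (fixed by $G_L$ since $L_0/L$ is unramified), the same map induced by $\theta$ on the unit part, and the identity on the $\zz^{G_L}$-factor. The only difference is that the key bijection $\calF(\p_L)\cong\bigl(\prod_r U_{L_0}^{(1)}(\rhonr)\bigr)^{G_L}$ is quoted in the paper from the lemma on p.~237 of Lubin--Rosen, whereas you verify it directly via $\theta^{\varphi^{n}}=u^{-n}\circ\theta$ and the descent fact $\oo_{L_0}^{G_L}=\oo_L$; both the equivariance computation and the surjectivity argument are sound (modulo the sign convention for $\rhonr(\varphi)$ versus $u^{\pm 1}$, which you already flag as the point requiring care), so this is an acceptable, slightly more self-contained substitute for the citation.
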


\begin{proof}
We have $L_0^\times \cong \pi_L^\Z \times \kappa^\times \times U_{L_0}^{(1)}$, where $\kappa$ denotes the residue class field of $L_0$. Since any element of $\kappa^\times$ has coprime to $p$ order and $U_{L_0}^{(1)}$ is $p$-complete we obtain $\widehat{L_0^\times} \cong \Z_p \times U_{L_0}^{(1)}$.
By \cite[Lemma~page~237]{LubinRosen} and recalling the obvious isomorphism $U_{L_0}^{(1)}\cong \mathbb G_m(\p_{L_0})$, the formal isomorphism $\theta:\calF\to\mathbb G_m^r$ induces an isomorphism
\[\tilde f\colon\calF(\p_L)\lra \left(\prod_r U_{L_0}^{(1)}(\rhonr) \right)^{G_L}.\]
By the construction of $\theta$, this isomorphism is given by a power series $1+\epsilon^{-1} X+(\deg\geq 2)$. To conclude, it is enough to twist and take $G_L$-fixed points in $\prod_r\widehat{L_0^\times} \cong \prod_r\Z_p \times \prod_r U_{L_0}^{(1)}$.
\end{proof}

We set $\calG := \Gal(K_0/K) \times \Gal(L/K)$. In analogy to \cite[(6.2)]{Chinburg85} and \cite{BC2} we set
\[\Lnr := \Ind_{\Gal(L_0/K)}^\calG  \Lambda_L\]
and we usually identify $\Lnr$ with $\prod_{d_{L/K}}\prod_r \widehat{L_0^\times}$. Note that the $\calG$-module structure on $\Lnr$ is characterized by
\begin{equation}\label{rule 1}
(F\times 1)\cdot[x_1,x_2,\dots,x_{d_{L/K}}]=[F_Lx_{d_{L/K}},x_1,x_2,\dots,x_{d_{L/K}-1}],
\end{equation}
and
\begin{equation}\label{rule 2}
(F^{-n}\times \sigma)\cdot[x_1,x_2,\dots,x_{d_{L/K}}]=[\tilde\sigma x_1,\tilde\sigma 
x_2,\dots,\tilde\sigma x_{d_{L/K}}],
\end{equation}
where $x_i\in\Lambda_L$, the elements $F^{-n}$ and $\sigma\in \Gal(L/K)$ have the same restriction to $L \cap K_0$ and $\tilde\sigma \in \Gal(L_0/K)$ is uniquely defined by $\tilde\sigma|_{K_0}=F^{-n}$ and $\tilde\sigma|_L=\sigma$. 

We also have a $G_K$- and a $\Gal(L/K)$-action on $\Lnr$, via the natural maps $G_K\to\Gal(L_0/K)\hookrightarrow\calG$ and $\Gal(L/K) \hookrightarrow \calG$. The following lemma clarifies the structure of $\Lnr$ as a $\Gal(L/K)$-module. 

\begin{lemma}\label{LurInd}
If we identify the inertia group $I_{L/K}$ with $\Gal(L_0/K_0)$, then there is a $\Gal(L/K)$-isomorphism
\[\tfFL \colon \Ind_{I_{L/K}}^{\Gal(L/K)}\left(\prod_r\widehat{L_0^\times}\right) \lra \Lnr.\]
In particular, $\Lnr$ is $\Gal(L/K)$-cohomologically trivial.
\end{lemma}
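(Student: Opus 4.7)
The plan is to recognise $\Lnr$ as a $\Gal(L/K)$-module induced from $I_{L/K}$ via a double-coset computation inside $\calG = \Gal(K_0/K) \times \Gal(L/K)$, and then to deduce cohomological triviality from Shapiro's lemma together with the cohomological triviality of $\widehat{L_0^\times}$ as an $I_{L/K}$-module.

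The first step is a short group-theoretic verification inside $\calG$. Using the embedding $\Gal(L_0/K) \hookrightarrow \calG$, $\sigma \mapsto (\sigma|_{K_0}, \sigma|_L)$, and the given embedding $\Gal(L/K) \hookrightarrow \calG$, $\sigma \mapsto (1, \sigma)$, one sees that $\Gal(L_0/K) \cap \Gal(L/K)$ consists of the pairs $(1,\sigma)$ with $\sigma|_{L \cap K_0} = 1$; under the identification $\Gal(L/L \cap K_0) = I_{L/K}$ this is exactly $I_{L/K}$. Moreover every $(F^{-n}, \sigma) \in \calG$ factors as $(F^{-n}, \mu)\cdot(1, \mu^{-1}\sigma)$ with $(F^{-n}, \mu) \in \Gal(L_0/K)$ for any choice of lift $\mu \in \Gal(L/K)$ of $F^{-n}|_{L \cap K_0}$; so $\calG = \Gal(L_0/K) \cdot \Gal(L/K)$ is a single $(\Gal(L_0/K), \Gal(L/K))$-double coset.

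Applying the Mackey decomposition to this single double coset yields a canonical $\Gal(L/K)$-equivariant isomorphism
\[\tfFL \colon \Ind_{I_{L/K}}^{\Gal(L/K)} \bigl(\Lambda_L\big|_{I_{L/K}}\bigr) \xrightarrow{\sim} \bigl(\Ind_{\Gal(L_0/K)}^{\calG} \Lambda_L\bigr)\big|_{\Gal(L/K)} = \Lnr.\]
Since $\rhonr$ is unramified its restriction to $I_{L/K}$ is trivial, so $\Lambda_L|_{I_{L/K}}$ is $\prod_r \widehat{L_0^\times}$ with the untwisted factorwise Galois action, exactly the module appearing in the lemma. To make $\tfFL$ explicit one fixes coset representatives $g_1, \dots, g_{d_{L/K}}$ of $I_{L/K}$ in $\Gal(L/K)$ with $g_i|_{L \cap K_0} = F^{i-1}|_{L \cap K_0}$, identifies $[x_1, \dots, x_{d_{L/K}}] \in \Lnr$ with $\sum_i g_i \otimes x_i$, and verifies against the rules (\ref{rule 1}) and (\ref{rule 2}) that the action transports correctly to the standard left $\Gal(L/K)$-action on the induced module.

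For the cohomological triviality, Shapiro's lemma and Mackey give, for every subgroup $H \leq \Gal(L/K)$ and every $i \in \Z$,
\[\hat H^i(H, \Lnr) \cong \bigoplus_{g \in H\backslash \Gal(L/K)/I_{L/K}} \hat H^i\bigl(H \cap gI_{L/K}g^{-1}, \prod_r \widehat{L_0^\times}\bigr),\]
and each summand vanishes once one knows that $\widehat{L_0^\times}$ is $I_{L/K}$-cohomologically trivial. This is a classical consequence of $L_0/K_0$ being a Galois extension of a complete discretely valued field whose residue field $\overline{\mathbb F}_p$ is algebraically closed: Hilbert 90 and the surjectivity of the norm $N \colon L_0^\times \to K_0^\times$ in this setting yield the vanishing of all Tate cohomology of $L_0^\times$, and the property passes to the $p$-completion. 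The main obstacle I anticipate is precisely this last input: while the Mackey-style identification of $\Lnr$ is essentially bookkeeping with coset representatives, the cohomological triviality of $\widehat{L_0^\times}$ over $I_{L/K}$ is a non-obvious local-field theoretic fact that must be invoked (or extracted from the $r=1$ analysis in \cite{BC2}, which generalises factorwise).
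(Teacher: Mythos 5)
Your argument is correct and is essentially the fleshed-out version of what the paper does, since the paper simply defers to \cite[Lemma 4.1.2]{BC2}, whose content is exactly this identification of $\Lnr|_{\Gal(L/K)}$ as induced from $I_{L/K}$ (your Mackey single-double-coset computation is the conceptual packaging of the explicit coset-representative check there) combined with the cohomological triviality of $\widehat{L_0^\times}$ over $\Gal(L_0/K_0)$, which holds because the residue field of $K_0$ is algebraically closed. No gaps.
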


\begin{proof}
The proof is the same as in \cite[Lemma 4.1.2]{BC2}, up to some obvious modifications.
\end{proof}

The following lemma will be used to prove Lemma \ref{Neukirch}, which will be one of the key ingredients in the proof of our main theorem.
\begin{lemma}\label{linalg}
Let $K$ be an algebraically closed field of characteristic $p$, let $q$ be a power of $p$, let $A$, $B\in K^{n,n}$ for some $n\in\N$ and let $f:K\to K^n$ be a function whose components are polynomials involving only terms of degree larger than $q$ and constants. Assuming that $B$ is invertible, the system of equations
\begin{equation}\label{eqlinalg}f(x_1)+Ax^q+Bx=0\end{equation}
has a solution $x=(x_1,\dots,x_n)\in K^n$, where the $q$-th power $x^q$ is taken componentwise.
\end{lemma}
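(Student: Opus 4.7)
\bigskip

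\noindent\textbf{Proof plan.}

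The plan is to first normalize to $B = I_n$. Multiplying the equation on the left by $B^{-1}$ replaces $A$ by $B^{-1}A$ and $f$ by $B^{-1}f$; each component $(B^{-1}f)_i$ is a $K$-linear combination of the original components of $f$, hence still a polynomial in $x_1$ with only a constant term and terms of degree $>q$. From now on the equation reads $\Psi(x) := x + Ax^q + f(x_1) = 0$, and the task is to produce a $K$-point of the morphism $\Psi \colon \mathbb{A}_K^n \to \mathbb{A}_K^n$ lying over $0$.

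I would then proceed by induction on $n$. The case $n=1$ is immediate: $\Psi$ is a non-constant polynomial in $x_1$ (its coefficient of $x_1$ equals $1 \ne 0$), hence has a root in the algebraically closed field $K$. For the inductive step, I treat $x_1$ as a parameter and view the last $n-1$ equations,
\[
x_i + \sum_{j\ge 2} A_{ij} x_j^q = -A_{i1}x_1^q - f_i(x_1), \quad i = 2, \ldots, n,
\]
as a system in $x_2,\ldots,x_n$. For each fixed $\alpha \in K$ of $x_1$, this is an instance of the lemma in dimension $n-1$: the linear part is $I_{n-1}$ (invertible), the Frobenius part is the submatrix $A' = (A_{ij})_{i,j\ge 2}$, and the new ``$f$'' is the constant $K^{n-1}$-vector $-A_{\cdot 1}\alpha^q - f(\alpha)$, which trivially satisfies the hypothesis (it has only a constant term). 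By induction, a solution exists for every $\alpha$, so the closed subscheme $Z := V(\Psi_2, \ldots, \Psi_n) \subset \mathbb{A}^n$ surjects onto the $x_1$-axis via the first projection, and in particular $\dim Z \ge 1$.

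It remains to produce a point of $Z$ where $\Psi_1$ also vanishes, and this is where the main obstacle lies. The Jacobian of $\Psi$ at $x = 0$ is $I_n + f'(0)\, e_1^T = I_n$, since in characteristic $p$ the $q$-th powers contribute nothing to the differential and $f'(0) = 0$ (each $f_i$ has no linear term, being a sum of a constant and terms of degree $>q\ge 1$). Thus $\Psi$ is étale at the origin, and its image $\Psi(K^n)$ contains a Zariski-open neighbourhood of $\Psi(0) = f(0)$. Moreover, the additive part $\phi(x) := x + Ax^q$ is a morphism of algebraic groups on $\mathbb{G}_a^n$ whose Jacobian is identically $I_n$; since its image is then a closed subgroup containing a Zariski-open neighbourhood of the identity in the connected group $\mathbb{G}_a^n$, it must equal all of $\mathbb{G}_a^n$, so $\phi$ is surjective and each fibre $\phi^{-1}(-f(\alpha))$ is non-empty. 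I would combine this with $\dim Z \ge 1$ and study the restriction $\Psi_1|_Z \colon Z \to \mathbb{A}^1$: either it is dominant on some component of $Z$, in which case a projective-closure / Bezout argument should show that $0$ is not among the finitely many omitted values, or it is constant on every component, which should be ruled out by using the torsor structure of $\phi^{-1}(-f(\alpha))$ under $\ker \phi$ to perturb inductive solutions of the subsystem. The genuinely delicate part is making this globalization precise, ruling out the scenario in which $\Psi_1|_Z$ takes only non-zero values.
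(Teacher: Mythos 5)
Your reduction to $B=I$ and the base case $n=1$ are fine, and the auxiliary observations (étaleness of $\Psi$ at the origin, surjectivity of the additive map $x\mapsto x+Ax^q$) are correct. But the proof stops exactly at the point where the actual difficulty sits: you must show that $\Psi_1$ vanishes somewhere on the curve-like locus $Z=V(\Psi_2,\dots,\Psi_n)$, and neither of the two strategies you sketch is carried out or obviously workable. If $\Psi_1|_Z$ is dominant on a component, its image is only cofinite in $\mathbb{A}^1$ and may a priori omit $0$; intersection-theoretic arguments do not apply directly because affine subvarieties of complementary dimension can be disjoint (the intersection can escape to infinity), so the ``projective-closure / Bezout'' step would require a genuine analysis of the behaviour of $\Psi_1$ and $Z$ at infinity. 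The alternative ``torsor perturbation'' is likewise only a heading, not an argument. As written, the inductive step establishes $\dim Z\ge 1$ and nothing more, so the proof is incomplete. A further structural problem is the direction of your induction: fixing $x_1$ and solving the last $n-1$ equations produces, for each $\alpha$, solutions that have no reason to be compatible with the first equation, so the fiberwise inductive hypothesis cannot be glued.

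The idea you are missing is to eliminate in the opposite direction, using that $q$ is a power of the characteristic. The paper first performs row operations so that the first row of $A$ has at most the entry $a_{1,1}$ nonzero. If the first equation then involves only $x_1$ among the linear terms, it is a one-variable polynomial equation: solve it and substitute, leaving an $(n-1)$-dimensional system of the same shape. Otherwise some $b_{1,j}\ne 0$ with $j>1$; after normalizing one may take $b_{1,n}=1$ and clear the $n$-th column of $B$ below the first row, so the first equation can be solved \emph{linearly} for $x_n$ as $x_n=-f_1(x_1)-a_{1,1}x_1^q-\sum_{j<n}b_{1,j}x_j$. Substituting this into the remaining equations, the term $x_n^q$ expands with no mixed terms because $(a+b)^q=a^q+b^q$ in characteristic $p$, so the resulting system in $x_1,\dots,x_{n-1}$ again has the form $\tilde f(x_1)+\tilde A\tilde x^q+\tilde B\tilde x=0$ with $\tilde B$ invertible, and induction applies. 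This is pure Gaussian elimination plus Frobenius additivity and avoids all of the geometric globalization your outline leaves open.
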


\begin{proof}
We prove this lemma by induction on $n$. For $n=1$, (\ref{eqlinalg}) consists of a single non-constant polynomial equation, since there must be a non-trivial term of degree $1$ ($B$ is assumed to be invertible). Since $K$ is algebraically closed, it follows that (\ref{eqlinalg}) has a solution.

So let us assume the result holds for $n-1\geq 1$ and let us prove it for $n$. Up to some elementary row operations we can assume that the first row of $A$ has at most one nonzero entry $a_{1,1}$ (note that we allow also $a_{1,1}=0$). Then the first equation of our system takes the form
\[f_1(x_1)+a_{1,1}x_1^q+\sum_{j=1}^nb_{1,j}x_j=0,\]
where some of the $b_{1,j}$ are non-zero. Here we need to distinguish two cases.

Case 1: $b_{1,j}=0$ for all $j>1$ and $b_{1,1}\neq 0$. This means that $B$ takes the form
\[B=\begin{pmatrix}b_{1,1}& 0\\ \tilde b &\tilde B\end{pmatrix},\]
where $\tilde B\in K^{n-1,n-1}$ is invertible and $\tilde b\in K^{n-1,1}$. In particular, the first equation is a non-trivial polynomial in the only variable $x_1$, which has a solution since $K$ is algebraically closed. Substituting this value for $x_1$ in all the other equations we obtain a new system of the shape
\[c+\tilde A\tilde x^q+\tilde B\tilde x=0,\]
where $\tilde x=(x_2,\dots,x_n)$ and $c$ is a constant vector in $K^{n-1}$, which we can interpret as a constant function $K\to K^{n-1}$. Note also that $\tilde B$ is invertible as observed above. Therefore we can apply the inductive hypothesis to find a solution $\tilde x$; together with the already computed first component $x_1$, we obtain a solution of (\ref{eqlinalg}).

Case 2: $b_{1,j}\neq 0$ for some $j>1$. Up to a permutation of the variables and normalizing the last coefficient, we can assume $b_{1,n}=1$. Up to some further elementary row operations (which do not modify the first row) we may assume that $b_{i,n}=0$ for $i>1$. So the matrix $B$ has the shape
\[B=\begin{pmatrix}\tilde b& 1\\ \tilde B & 0\end{pmatrix},\]
where $\tilde B\in K^{n-1,n-1}$ is clearly invertible and $\tilde b\in K^{1,n-1}$. Hence we can rewrite the first equation as
\begin{equation}\label{defxn}x_n=-f_1(x_1)-a_{1,1}x_1^q-\sum_{j=1}^{n-1} b_{1,j}x_j.\end{equation}
Substituting this into the other equations of (\ref{eqlinalg}) we obtain a new system of the type
\begin{equation}\label{eqlinalgindu}\tilde f(x_1)+\tilde A\tilde x^q+\tilde B\tilde x=0,\end{equation}
where $\tilde x=(x_1,\dots,x_{n-1})$ and $\tilde B$ is as above. Note that (\ref{eqlinalgindu}) has the same shape as (\ref{eqlinalg}), since, as already mentioned, $\tilde B$ is invertible. Note also that here we used that $q$ is a multiple of the characteristic of the field, so that, when computing $x_n^q=(-f_1(x_1)-a_{1,1}x_1^q-\sum_{j=1}^{n-1} b_{1,j}x_j)^q$, all the mixed terms vanish. Therefore we can use the inductive hypothesis to find $x_1,\dots,x_{n-1}$ and then take $x_n$ as in (\ref{defxn}). This will be a solution of (\ref{eqlinalg}).
\end{proof}

Now we are in a position to prove the following result which generalizes a well-known lemma, see e.g. \cite[Lemma V.2.1]{Neukirch92}.

\begin{lemma}\label{Neukirch}
Given an element $c\in \Upsilon_L$, there exists $x\in \Upsilon_L$ such that $(F_L-1)\cdot x=c$. If $\tilde x$ is such that $(F_L-1)\cdot \tilde x\equiv c\pmod{\p_{L_0}^i}$ for some $i$, then we can assume $x\equiv\tilde x\pmod{\p_{L_0}^i}$. In particular, if $c\in \prod_r U_{L_0}^{(1)}(\rhonr)$, then we can assume $x\in \prod_r U_{L_0}^{(1)}(\rhonr)$.
\end{lemma}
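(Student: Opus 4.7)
The plan is to solve $(F_L - 1) x = c$ by successive $\p_{L_0}$-adic approximation, exploiting the pro-$p$ completeness of $\widehat{U_{L_0}} = U_{L_0}^{(1)}$. Concretely, I would construct a sequence $x_0, x_1, \ldots \in \Upsilon_L$ with
\[x_{n+1} \equiv x_n \pmod{\p_{L_0}^{i+n}} \quad\text{and}\quad (F_L - 1) x_n \equiv c \pmod{\p_{L_0}^{i+n}},\]
starting from $x_0 := \tilde x$ when $\tilde x$ is given, and from $x_0 := 0$ (with $i := 1$) otherwise. The latter is valid because every element of $\Upsilon_L$ is automatically $\equiv 0 \pmod{\p_{L_0}}$ in the additive notation, so the base congruence $0 \equiv c \pmod{\p_{L_0}}$ holds. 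The limit $x := \lim_n x_n$ then lies in $\Upsilon_L$, solves the equation exactly and satisfies $x \equiv \tilde x \pmod{\p_{L_0}^i}$; taking $\tilde x = 0$ will give the final ``in particular'' assertion.

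For the inductive step, set $j := i + n$ and write $c - (F_L - 1) x_n \equiv \pi_L^j \vec d \pmod{\p_{L_0}^{j+1}}$ for some $\vec d \in \oo_{L_0}^r$. I would look for a correction of the form $y := \pi_L^j \vec a$ with $\vec a \in \oo_{L_0}^r$. Since $F_L$ fixes the uniformizer $\pi_L \in L$, a direct computation gives
\[(F_L - 1) y = \pi_L^j \bigl( \rhonr(F_L) \cdot F_L(\vec a) - \vec a \bigr),\]
and the required congruence $(F_L - 1) y \equiv \pi_L^j \vec d \pmod{\p_{L_0}^{j+1}}$ then collapses, upon reduction to the residue field $\kappa = \overline{\mathbb F_p}$ of $L_0$, to the system
\[\bar\alpha \, \vec a^{\,q} - \vec a = \vec d\]
in $\kappa^r$, where $\bar\alpha \in \Gl_r(\mathbb F_p)$ is the reduction of $\rhonr(F_L) \in \Gl_r(\Z_p)$ and $q$ is the cardinality of the residue field of $L$ (so that $F_L$ acts on $\kappa$ as $x \mapsto x^q$). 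This is exactly the situation of Lemma~\ref{linalg} with $f := -\vec d$ (constant in $x_1$), $A := \bar\alpha$ and $B := -I$; the hypothesis that $B$ is invertible is trivially met, so a solution $\vec a \in \kappa^r$ exists. Any lift to $\oo_{L_0}^r$ then yields $x_{n+1} := x_n + \pi_L^j \vec a$, closing the induction.

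The main obstacle is precisely what Lemma~\ref{linalg} was designed to overcome: the induced operator $\vec a \mapsto \bar\alpha \, \vec a^{\,q} - \vec a$ on each graded piece $\p_{L_0}^j / \p_{L_0}^{j+1} \cong \kappa$ is not $\kappa$-linear but only $\mathbb F_p$-linear, since it mixes the matrix $\bar\alpha$ with the Frobenius $\vec a \mapsto \vec a^{\,q}$. Inverting it cannot be done by naive linear algebra; one must appeal to the algebraic closedness of $\kappa$ and the induction on $r$ built into Lemma~\ref{linalg}. Everything else is just bookkeeping on the filtration $\{U_{L_0}^{(j)}\}_{j \ge 1}$ and a passage to the $p$-adic limit.
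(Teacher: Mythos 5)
Your inductive step is exactly the paper's: write the error at level $j$ as $1+\pi_L^j\vec d$, seek a correction of the form $1+\pi_L^j\vec a$, observe that $(F_L-1)$ acts on the graded piece $\p_{L_0}^j/\p_{L_0}^{j+1}$ as $\vec a\mapsto\rhonr(F_L)\vec a^{\,q}-\vec a$, and invoke Lemma~\ref{linalg} with $A$ the reduction of $\rhonr(F_L)$, $B=-I$ and $f$ constant; the passage to the limit and the treatment of the clause involving $\tilde x$ are also the same. The one genuine divergence is the base case, and as written it is a gap. You assert that every element of $\Upsilon_L=\prod_r\widehat{U_{L_0}}(\rhonr)$ is automatically $\equiv 0\pmod{\p_{L_0}}$, so that you may start from $x_0=0$ with $i=1$. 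But the final clause of the lemma (``in particular, if $c\in\prod_r U_{L_0}^{(1)}(\rhonr)$, then we can assume $x\in\prod_r U_{L_0}^{(1)}(\rhonr)$'') is vacuous under that reading; the lemma, and the paper's proof, treat $\Upsilon_L$ as containing elements whose reduction modulo $\p_{L_0}$ is a nontrivial tuple in $(\kappa^\times)^r$. For such $c$ the congruence $(F_L-1)\cdot x\equiv c\pmod{\p_{L_0}}$ is a genuine condition that your induction can never satisfy, since all your corrections lie in $\prod_r U_{L_0}^{(1)}$.

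The paper supplies the missing base step by a separate linear-algebra argument on the residue level: modulo $\p_{L_0}$ the twisted Frobenius acts on exponents as the matrix $q\rhonr(F_L)$ (the twist matrix composed with the $q$-power Frobenius), so $(F_L-1)$ acts as $q\rhonr(F_L)-I$, which is $\equiv -I\pmod p$ and hence invertible over $\Z_p$; putting $\calM=(q\rhonr(F_L)-I)^{-1}$ and $x_1=\calM\cdot c$ gives $(F_L-1)\cdot x_1\equiv c\pmod{\p_{L_0}}$, after which the induction you describe takes over (and can indeed be started at $x_1=1$ precisely when $c\in\prod_r U_{L_0}^{(1)}(\rhonr)$). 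If you want to keep your shortcut, you must first prove that the $p$-completion $\widehat{U_{L_0}}$ is identified with $U_{L_0}^{(1)}$, i.e.\ that the prime-to-$p$ divisible part coming from $\kappa^\times$ dies in the completion, and reconcile that with the way the lemma is stated; otherwise, insert the base step above.
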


\begin{proof}
Let $I$ be the $r\times r$ identity matrix and let $q\in\N$ be such that 
$x^{F_L}\equiv x^q\pmod{\p_{L_0}}$ for all $x\in\oo_{L_0}$. The determinant of the matrix $q\rhonr(F_L)-I$ is obviously a unit in $\Z_p$. Therefore there exists an inverse matrix $\calM$ of $q\rhonr(F_L)-I$ with coefficients in $\Z_p$. Then we take $x_1=\calM\cdot c$ and we have $(F_L-1)\cdot x_1\equiv c\pmod{\p_{L_0}}$.

Let us assume we have $x_i$ such that $(F_L-1)\cdot x_i\equiv c\pmod{\p_{L_0}^i}$, i.e. such that there exists $a_i\in \prod_r U_{L_0}^{(i)}(\rhonr)$ with $((F_L-1)\cdot x_i)\cdot a_i=c$, where the product of $((F_L-1)\cdot x_i)$ and $a_i$ is done componentwise. We take $b_i\in \prod_r\mathcal O_{L_0}$ so that $a_i=1+\pi_L^ib_i$, where $\pi_L$ is an uniformizing element of $L$ (and hence also of $L_0$) and $1$ is the vector consisting only of ones. Since the residue field of $L_0$ is algebraically closed, we can apply Lemma \ref{linalg} and obtain a $y_i\in \prod_r\oo_{L_0}$ such that
\[\rhonr(F_L)y_i^q-y_i-b_i\equiv 0\pmod{\p_{L_0}}.\]
We set $x_{i+1}=x_i(1+y_i\pi_L^i)$ and we calculate
\[\begin{split}(F_L-1)\cdot x_{i+1}&=((F_L-1)\cdot x_i)\cdot ((F_L-1)\cdot(1+y_i\pi_L^i))\\
&\equiv ca_i^{-1}(1+\rhonr(F_L)y_i^q\pi_L^i)(1-y_i\pi_L^i)\pmod{\p_{L_0}^{i+1}}\\
&\equiv ca_i^{-1}(1+\rhonr(F_L)y_i^q\pi_L^i-y_i\pi_L^i)\pmod{\p_{L_0}^{i+1}}\\
&\equiv ca_i^{-1}(1+b_i\pi_L^i)\pmod{\p_{L_0}^{i+1}}\\
&\equiv c\pmod{\p_{L_0}^{i+1}}.\end{split}\]
Therefore we have an inductive construction for the $x_i$, which can be started with $x_1=1$ in the case that $c\in \prod_r U_{L_0}^{(1)}(\rhonr)$. Then the conclusions of the lemma hold for $x=\lim_{n\to\infty}x_n$.
\end{proof}

To conclude this section we prove some technical results which will be needed to prove our main theorem.

\begin{lemma}\label{linalglemma}
Let $R$ be a discrete valuation ring with uniformizing element $\pi$, let $K$ be its field of quotients and let $f\in\mathrm{End}_R(R^r)$. Then there exists $\omega\in\N$ such that for all $n\geq \omega$,
\[K\cdot\mathrm{im}(f)\cap \pi^nR^r\subseteq \pi^{n-\omega}\mathrm{im}(f).\]
\end{lemma}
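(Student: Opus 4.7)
\textbf{Proof plan for Lemma \ref{linalglemma}.}
The plan is to replace $\mathrm{im}(f)$ by its saturation inside $R^r$ and control everything using the structure theorem for finitely generated modules over the PID $R$. Concretely, I set
\[
N \;:=\; K\cdot\mathrm{im}(f)\,\cap\, R^r,
\]
so that $\mathrm{im}(f)\subseteq N \subseteq R^r$ and, by construction, $N$ is the \emph{saturation} of $\mathrm{im}(f)$ in $R^r$, i.e.\ $R^r/N$ is torsion-free.

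First I would observe that since $R$ is a DVR (hence a PID), the torsion-free finitely generated module $R^r/N$ is free, so the short exact sequence $0 \to N \to R^r \to R^r/N \to 0$ splits and $N$ is a direct summand of $R^r$. Choosing any complement $N'$ with $R^r = N\oplus N'$, one gets at once
\[
N \cap \pi^n R^r \;=\; \pi^n N
\]
for every $n\ge 0$. Next, because $K\otimes_R N = K\otimes_R\mathrm{im}(f)$, the quotient $N/\mathrm{im}(f)$ is a finitely generated torsion $R$-module; by the structure theorem it has a bounded exponent, i.e.\ there exists $\omega\in\N$ with $\pi^\omega N \subseteq \mathrm{im}(f)$. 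I take this $\omega$ as the $\omega$ in the statement.

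Putting the two ingredients together: for any $n \ge \omega$ and any
\[
x \;\in\; K\cdot\mathrm{im}(f)\,\cap\, \pi^n R^r \;\subseteq\; N\cap \pi^n R^r \;=\; \pi^n N \;=\; \pi^{n-\omega}\bigl(\pi^\omega N\bigr) \;\subseteq\; \pi^{n-\omega}\,\mathrm{im}(f),
\]
which is exactly the desired inclusion.

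There is no real obstacle here; the only tiny subtlety is to make sure one passes to the saturation $N$ before intersecting with $\pi^n R^r$ (the naive equality $\mathrm{im}(f)\cap \pi^n R^r = \pi^n\mathrm{im}(f)$ of course fails in general, which is precisely why the shift by $\omega$ is unavoidable). Alternatively, one can give a completely explicit proof by invoking Smith normal form: writing $f = P\,\mathrm{diag}(\pi^{a_1},\dots,\pi^{a_s},0,\dots,0)\,Q$ with $P,Q\in\Gl_r(R)$ and $0\le a_1\le\cdots\le a_s$, one sees directly that $\omega := a_s$ works, which also makes the bound effective. I would include the conceptual argument above as the main proof and, if useful in later sections, remark on the explicit value $\omega = a_s$ coming from the elementary divisors of $f$.
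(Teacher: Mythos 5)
Your proof is correct, and it shares the paper's overall skeleton: both arguments pass to the saturation $N=\tilde{\mathcal U}:=K\cdot\mathrm{im}(f)\cap R^r$ and then combine two facts, namely a bounded-exponent statement $\pi^s N\subseteq\mathrm{im}(f)$ with control of $N\cap\pi^nR^r$. The difference lies in how the second fact is obtained. The paper invokes the Artin--Rees lemma to get only an inclusion $N\cap\pi^nR^r\subseteq\pi^{n-k}(N\cap\pi^kR^r)$, and consequently ends up with $\omega=s+k$. You instead observe that $R^r/N$ is finitely generated and torsion-free, hence free over the DVR $R$, so that $N$ is a direct summand of $R^r$ and one has the exact equality $N\cap\pi^nR^r=\pi^nN$; this eliminates the Artin--Rees constant entirely and gives the sharper $\omega=s$. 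Your argument is more elementary and, via Smith normal form, even makes $\omega$ effective (the largest elementary divisor exponent of $f$), whereas the paper's Artin--Rees route is the one that would survive over a general noetherian local ring. For the purposes of Lemmas \ref{pipartforW} and \ref{MittagLefflerW} only the existence of some $\omega$ matters, so either proof serves equally well.
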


\begin{proof}
We set $\mathcal U=\mathrm{im}(f)$ and $\tilde{\mathcal U}=R^r\cap K\cdot\mathcal U$. Since $R^r$ is a noetherian ring, the ascending chain of ideals $R^r\cap \frac{1}{\pi^n}\mathcal U$ for $n>0$ stabilizes. Therefore there exists $s\geq 0$ such that $\tilde{\mathcal U}=R^r\cap \frac{1}{\pi^s}\mathcal U$ and hence we have 
\[\pi^s\tilde{\mathcal U}\subseteq \mathcal U.\]
By the Theorem of Artin-Rees there exists $k\in\N$ such that for all $n\geq k$, 
\[\tilde{\mathcal U}\cap \pi^n R^r\subseteq \pi^{n-k}(\tilde{\mathcal U}\cap (\pi^k R^r)).\]
Therefore setting $\omega=s+k$ we deduce that for all $n\geq \omega$,
\[K\cdot\mathcal U\cap \pi^nR^r\subseteq \tilde{\mathcal U}\cap \pi^nR^r\subseteq \pi^{n-k}(\tilde{\mathcal U}\cap (\pi^k R^r))\subseteq \pi^{n-\omega}\pi^s\tilde{\mathcal U}\subseteq \pi^{n-\omega}\mathcal U.\]
\end{proof}

\begin{lemma}\label{pipartforW}
Keeping all the notations of the previous lemma, for all $m,n\in\N$ such that $m>n+\omega$ we have
\[((\mathrm{im}(f)\cap \pi^n R^r)+\pi^m R^r)/(\pi^n \mathrm{im}(f)+\pi^m R^r)\cong (\mathrm{im}(f)\cap \pi^n R^r)/\pi^n \mathrm{im}(f).\]
\end{lemma}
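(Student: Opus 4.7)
The plan is to identify the left-hand side via the second isomorphism theorem. Writing $A := \mathrm{im}(f) \cap \pi^n R^r$ and $B := \pi^n \mathrm{im}(f)$, one has $B \subseteq A$ (since $\pi^n \mathrm{im}(f)$ is contained in both $\mathrm{im}(f)$ and $\pi^n R^r$). The natural inclusion $A \hookrightarrow A + \pi^m R^r$ then induces a surjection onto $(A + \pi^m R^r)/(B + \pi^m R^r)$ with kernel $A \cap (B + \pi^m R^r)$. Consequently the lemma is equivalent to the set-theoretic equality
\[
A \cap (B + \pi^m R^r) = B.
\]

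The inclusion $B \subseteq A \cap (B + \pi^m R^r)$ is immediate. For the reverse, I would take $a \in A \cap (B + \pi^m R^r)$ and write $a = b + c$ with $b \in B$ and $c \in \pi^m R^r$. Because $B \subseteq A \subseteq \mathrm{im}(f)$, the element $c = a - b$ lies in $\mathrm{im}(f) \cap \pi^m R^r$. Now I invoke Lemma \ref{linalglemma} at the index $m$ in place of $n$; this is legal since $m > n + \omega \geq \omega$. It yields $c \in \pi^{m-\omega}\,\mathrm{im}(f)$, and the strict inequality $m - \omega > n$ then gives $\pi^{m-\omega}\,\mathrm{im}(f) \subseteq \pi^n\,\mathrm{im}(f) = B$. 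Hence $c \in B$ and therefore $a \in B$, completing the equality.

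I do not anticipate a serious obstacle: the content of the lemma is packaged in Lemma \ref{linalglemma}, and the remaining work is a bookkeeping exercise with the second isomorphism theorem. The one point that requires care is to apply Lemma \ref{linalglemma} at the index $m$, not $n$; the hypothesis $m > n + \omega$ is precisely what is needed for the gap of $\omega$ lost in that lemma to be absorbed so that the resulting power still sits inside $\pi^n\,\mathrm{im}(f)$.
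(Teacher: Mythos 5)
Your proof is correct and follows essentially the same route as the paper: both identify the left-hand side as the image of $\mathrm{im}(f)\cap\pi^nR^r$ and reduce to showing the kernel $(\mathrm{im}(f)\cap\pi^nR^r)\cap(\pi^n\mathrm{im}(f)+\pi^mR^r)$ equals $\pi^n\mathrm{im}(f)$, with Lemma \ref{linalglemma} supplying the key containment $\mathrm{im}(f)\cap\pi^mR^r\subseteq\pi^{m-\omega}\mathrm{im}(f)\subseteq\pi^n\mathrm{im}(f)$. The paper phrases this via the modular law and a rescaling by $\pi^{-n}$ where you argue element-wise, but the content is identical.
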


\begin{proof}
The natural map
\begin{equation}\label{surjforiso}\mathrm{im}(f)\cap \pi^n R^r\to ((\mathrm{im}(f)\cap \pi^n R^r)+\pi^m R^r)/(\pi^n \mathrm{im}(f)+\pi^m R^r)\end{equation}
is obviously surjective. Its kernel is
\[\mathrm{im}(f)\cap \pi^n R^r\cap (\pi^n \mathrm{im}(f)+\pi^m R^r).\]
We want to show that this is equal to $\pi^n \mathrm{im}(f)$. Note that
\[\pi^n \mathrm{im}(f)+\pi^m R^r\subseteq \pi^n R^r\]
and, recalling Lemma \ref{linalglemma}, we get
\[\begin{split}\mathrm{im}(f)\cap \pi^n R^r\cap (\pi^n \mathrm{im}(f)+\pi^m R^r)&=\mathrm{im}(f)\cap (\pi^n \mathrm{im}(f)+\pi^m R^r)\\
&=\pi^n \mathrm{im}(f)+(\mathrm{im}(f)\cap \pi^m R^r)\\
&=\pi^n\left(\mathrm{im}(f)+\left(\frac{1}{\pi^n}\mathrm{im}(f)\cap \pi^{m-n} R^r\right)\right)\\
&=\pi^n\mathrm{im}(f).\end{split}\]
\end{proof}

From now on we set $\omega_L$ to be the natural number obtained by setting $R=\Z_p$ and considering the endomorphism $f$ of $\Z_p^r$ induced by the matrix $\calM_L =\rhonr(F_L)-I$ in Lemma \ref{linalglemma}.

\begin{lemma}\label{MittagLefflerW}
If $m,n\in\N$ are such that $m>n+\omega_L$, then
\[\begin{split}&\left(\left((F_L-1)\cdot\Lambda_L\cap p^n\Lambda_L\right)+p^m\Lambda_L\right)/\left(p^n(F_L-1)\cdot\Lambda_L+p^m\Lambda_L\right)\\
&\qquad\cong \left((F_L-1)\cdot\Lambda_L\cap p^n\Lambda_L\right)/\left(p^n(F_L-1)\cdot\Lambda_L\right).\end{split}\]
\end{lemma}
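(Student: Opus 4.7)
The plan is to reduce the statement to Lemma \ref{pipartforW} applied with $R = \Z_p$ and $f = \calM_L$ acting on $\Z_p^r$. I first recall from the proof of Lemma \ref{UL1hat} the decomposition $\widehat{L_0^\times} \cong \Z_p \times U_{L_0}^{(1)}$. Because $F_L$ fixes any uniformizer of $L$, it acts trivially on the valuation factor $\Z_p$ and as the usual Frobenius on $U_{L_0}^{(1)}$. Taking $r$-fold products and twisting by $\rhonr$ therefore yields an $\langle F_L\rangle$-equivariant decomposition
\[\Lambda_L \;\cong\; \zz \times \Upsilon_L,\]
in which the twisted action of $F_L$ on $\zz$ is just multiplication by $\rhonr(F_L)$, while on $\Upsilon_L$ it is $\rhonr(F_L)$ composed with the genuine Frobenius. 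Hence $F_L - 1$ acts as $\calM_L = \rhonr(F_L) - I$ on $\zz$, and by Lemma \ref{Neukirch} it is surjective on $\Upsilon_L$.

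Next I would split each of the four $\Z_p$-submodules entering the statement along this product decomposition. Since $(F_L - 1)$ and multiplication by $p^n$ both respect the product, a direct computation gives
\[(F_L-1)\cdot\Lambda_L \cap p^n\Lambda_L \;\cong\; \bigl(\calM_L\zz\cap p^n\zz\bigr)\times p^n\Upsilon_L, \qquad p^n(F_L-1)\cdot\Lambda_L \;\cong\; p^n\calM_L\zz \times p^n\Upsilon_L,\]
and the analogous expressions after adding $p^m\Lambda_L = p^m\zz \times p^m\Upsilon_L$. The $\Upsilon_L$-components of numerator and denominator coincide and thus cancel in the quotient, so the lemma reduces to the purely module-theoretic assertion
\[\bigl((\calM_L\zz\cap p^n\zz)+p^m\zz\bigr)\big/\bigl(p^n\calM_L\zz+p^m\zz\bigr)\;\cong\;(\calM_L\zz\cap p^n\zz)/p^n\calM_L\zz,\]
which is exactly Lemma \ref{pipartforW} with $R = \Z_p$, $f = \calM_L$, and $\omega = \omega_L$ (this being precisely how $\omega_L$ was defined just before the lemma).

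I do not expect a serious obstacle: the substantive content has already been established upstream, with Lemma \ref{pipartforW} supplying the module-theoretic isomorphism and Lemma \ref{Neukirch} guaranteeing the surjectivity of $F_L - 1$ on $\Upsilon_L$ that makes the $\Upsilon_L$-factor contribute trivially. The only point requiring a bit of care is verifying the $\langle F_L\rangle$-equivariance of $\Lambda_L \cong \zz \times \Upsilon_L$, which follows from $F_L$ fixing a uniformizer $\pi_L$, together with the compatibility of the twist by $\rhonr$ with this splitting.
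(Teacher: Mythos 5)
Your proposal is correct and follows essentially the same route as the paper: split $\Lambda_L$ into the unit part $\Upsilon_L$ and the valuation part (the paper's $e(\zz)$), use Lemma \ref{Neukirch} to see that $(F_L-1)\cdot\Upsilon_L=\Upsilon_L$ so that the unit components of numerator and denominator agree and the quotient there is trivial, and reduce the remaining $\zz$-component to Lemma \ref{pipartforW} with $R=\Z_p$ and $f=\calM_L$. No gaps.
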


\begin{proof}
Let $e:\zz\to\Lambda_L$ be defined by $e(a_1,\dots,a_r)=(\pi_L^{a_1},\dots,\pi_L^{a_r})$. Then
\[\Lambda_L=\Upsilon_L\times e(\zz)\]
and all the computations may be split into those two components. Let us consider $\Upsilon_L$ first. By Lemma \ref{Neukirch}, $(F_L-1)\cdot\Upsilon_L=\Upsilon_L$ and it is straightforward to check that
\[\begin{split}&\left(\left((F_L-1)\cdot\Upsilon_L\cap p^n\Upsilon_L\right)+p^m\Upsilon_L\right)/\left(p^n(F_L-1)\cdot\Upsilon_L+p^m\Upsilon_L\right)\\
&\qquad\cong \left((F_L-1)\cdot\Upsilon_L\cap p^n\Upsilon_L\right)/\left(p^n(F_L-1)\cdot\Upsilon_L\right),\end{split}\]
since both sides are trivial. So it remains the $e(\zz)$-component and what has to be proved is equivalent to the statement of Lemma \ref{pipartforW} for $\calM_L =\rhonr(F_L)-I$.
\end{proof}

\end{section}

\begin{section}{The proof of Theorem \ref{mainthm}}

The goal of this section is to prove our main theorem, generalizing the approach of \cite[Sect.~4.2~and~4.3]{BC2} to the case in which $T$ is an unramified twist of $\Z_p^r(1)$, for any $r\geq 1$.

Let $W_n$ and $V_n$ be respectively the kernel and the image of the map
\[F_L-1:\Lambda_L/p^n\Lambda_L\to \Lambda_L/p^n\Lambda_L.\]

\begin{lemma}\label{Neu comm diag}
For $m>n$, we have a commutative diagram of exact sequences:
\[\xymatrix{0\ar[rr]&&W_{m}\ar[rr]\ar[d]^{\tau_{m,n}}&&\Lambda_L/p^m\Lambda_L
\ar[rr]^-{F_L-1}\ar[d]^{\pi_{m,n}}   &&V_{m}\ar[rr]\ar[d]^{\sigma_{m,n}}&&0\\
0\ar[rr]&&W_{n}\ar[rr]&&\Lambda_L/p^n\Lambda_L\ar[rr]^-{F_L-1}&&V_{n}
\ar[rr]&&0.}\]
Here $\pi_{m,n}$ denotes the canonical projection and $\tau_{m,n}$ and $\sigma_{m,n}$ are induced by $\pi_{m,n}$. Moreover, $\pi_{m,n}$ and $\sigma_{m,n}$ are surjective and the projective system $\left( W_n \right)_n$ satisfies the Mittag-Leffler condition.
\end{lemma}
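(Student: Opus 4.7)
The plan is to verify the diagram in stages. Exactness of each row is immediate from the definitions, since $W_n$ is by construction the kernel and $V_n$ the image of $F_L-1$ on $\Lambda_L/p^n\Lambda_L$; the maps $\tau_{m,n}$ and $\sigma_{m,n}$ are well-defined and make the squares commute because $p^m\Lambda_L \subset p^n\Lambda_L$, so $\pi_{m,n}$ automatically restricts from $W_m$ to $W_n$ and from $V_m$ to $V_n$. Surjectivity of $\pi_{m,n}$ is obvious, and for $\sigma_{m,n}$ one lifts an element $(F_L-1)\bar x \in V_n$ to any $\bar x'\in \Lambda_L/p^m\Lambda_L$ and observes that $(F_L-1)\bar x' \in V_m$ projects onto it.

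The heart of the argument is the Mittag-Leffler property. The plan is to apply the snake lemma to the diagram, which, together with surjectivity of $\pi_{m,n}$ having kernel $p^n\Lambda_L/p^m\Lambda_L$, collapses to the exact sequence
\[0 \lra \ker\tau_{m,n} \lra p^n\Lambda_L/p^m\Lambda_L \xrightarrow{F_L-1} \ker\sigma_{m,n} \lra \coker\tau_{m,n} \lra 0.\]
A direct computation identifies $\ker\sigma_{m,n}$ with $((F_L-1)\Lambda_L \cap p^n\Lambda_L + p^m\Lambda_L)/p^m\Lambda_L$ and the image of $F_L-1$ on $p^n\Lambda_L/p^m\Lambda_L$ with $(p^n(F_L-1)\Lambda_L + p^m\Lambda_L)/p^m\Lambda_L$, yielding
\[\coker\tau_{m,n} \cong \frac{(F_L-1)\Lambda_L \cap p^n\Lambda_L + p^m\Lambda_L}{p^n(F_L-1)\Lambda_L + p^m\Lambda_L}.\]
This is exactly the quotient treated in Lemma \ref{MittagLefflerW}, which asserts it is canonically isomorphic to the $m$-independent group $((F_L-1)\Lambda_L \cap p^n\Lambda_L)/(p^n(F_L-1)\Lambda_L)$ for every $m > n + \omega_L$. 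Since the isomorphisms are natural in $m$, the transition surjections $\coker\tau_{m',n} \twoheadrightarrow \coker\tau_{m,n}$ are in fact isomorphisms, forcing the images $\mathrm{im}(\tau_{m,n}) \subset W_n$ to stabilize, which is the Mittag-Leffler condition.

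The main obstacle is arranging the snake-lemma bookkeeping so that $\coker\tau_{m,n}$ matches verbatim the quotient occurring in Lemma \ref{MittagLefflerW}, and checking that the identification is natural in $m$, so that one is stabilizing actual subgroups of $W_n$ rather than merely abstract isomorphism classes of cokernels. Once this is in place, Lemmas \ref{pipartforW} and \ref{MittagLefflerW} of the previous section do all the remaining work.
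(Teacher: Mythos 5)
Your proposal is correct and follows essentially the same route as the paper: the formal checks are dismissed as immediate, the snake lemma yields $\coker\tau_{m,n}\cong\bigl(((F_L-1)\cdot\Lambda_L\cap p^n\Lambda_L)+p^m\Lambda_L\bigr)/\bigl(p^n(F_L-1)\cdot\Lambda_L+p^m\Lambda_L\bigr)$, and Lemma \ref{MittagLefflerW} shows this is independent of $m$ for $m>n+\omega_L$. Your additional remark that the identifications are natural in $m$ (so that the transition surjections between cokernels are isomorphisms and the actual images $\mathrm{im}(\tau_{m,n})\subseteq W_n$ stabilize) is a point the paper leaves implicit, and it is correctly justified by the fact that the isomorphism in Lemma \ref{pipartforW} is induced by the canonical map.
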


\begin{proof}
The exactness of the rows and the commutativity of the diagram as well as the surjectivity of $\pi_{m,n}$ and $\sigma_{m,n}$ are evident. It remains to show that the projective system $(W_n)_n$ satisfies the Mittag-Leffler condition. To that end we prove that $\coker(\tau_{m,n})$ stabilizes for any $n\in\N$, as $m$ tends to infinity.

By the Snake Lemma we obtain 
\[p^n\Lambda_L/p^m\Lambda_L\xrightarrow{F_L-1}
\ker(\sigma_{m,n})\lra\coker(\tau_{m,n})\lra 0\]
i.e.
\[\coker(\tau_{m,n})\cong\coker\left(p^n\Lambda_L/p^m\Lambda_L\xrightarrow{F_L-1}
\ker(\sigma_{m,n})\right).\]
Note that
\[V_n=\left((F_L-1)\cdot\Lambda_L+p^n\Lambda_L\right)/p^n\Lambda_L\]
and
\[\begin{split}\ker \sigma_{m,n}&=\left(\left((F_L-1)\cdot\Lambda_L+p^m\Lambda_L\right)\cap p^n\Lambda_L\right)/p^m\Lambda_L\\
&=\left(\left((F_L-1)\cdot\Lambda_L\cap p^n\Lambda_L\right)+ p^m\Lambda_L\right)/p^m\Lambda_L.\end{split}\]
Therefore
\[\begin{split}&\coker(\tau_{m,n})\cong \left(\left((F_L-1)\cdot\Lambda_L\cap p^n\Lambda_L\right)+ p^m\Lambda_L\right)/\left(p^n(F_L-1)\cdot\Lambda_L+p^m\Lambda_L\right).\end{split}\]
It is an immediate consequence of Lemma \ref{MittagLefflerW} that $\coker(\tau_{m,n})$ is independent of $m$, provided $m>n+\omega_L$.
\end{proof}

\begin{lemma}
We have a commutative diagram of short exact sequences
\[\xymatrix{
0\ar[r] & V_{m}\ar[rr]\ar[d]^{\sigma_{m,n}} && \Lambda_L/p^m\Lambda_L \ar[rr]^-{\nu_{L,m}}\ar[d]^{\pi_{m,n}}   
&& \zz/((F_L-1)\cdot\zz+p^m\zz) \ar[r]\ar[d] & 0\\
0\ar[r]&V_{n}\ar[rr]&&\Lambda_L/p^n\Lambda_L\ar[rr]^-{\nu_{L,n}}&&\zz/((F_L-1)\cdot\zz+p^n\zz)
\ar[r]&0,}\]
where $\nu_{L,m},\nu_{L,n}$ are the maps induced by the componentwise normalized valuation of $L^\times$.
The vertical maps are all surjective and so, in particular, they satisfy the Mittag-Leffler condition.
\end{lemma}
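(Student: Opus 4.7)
The plan is to define $\nu_{L,m}$ as the composition of the valuation map
\[
\Lambda_L/p^m\Lambda_L \lra \zz/p^m\zz,
\]
induced by $\widehat{L_0^\times}\cong \Z_p\times U_{L_0}^{(1)}$ in each of the $r$ components, with the natural projection onto $\zz/((F_L-1)\cdot\zz+p^m\zz)$. Since the valuation is intrinsically Galois-equivariant, it is compatible with the twisted action and in particular with $F_L-1$, so $\nu_{L,m}$ is well-defined. Surjectivity of $\nu_{L,m}$ is immediate from the section $e\colon \zz\to\Lambda_L$, $e(a_1,\ldots,a_r)=(\pi_L^{a_1},\ldots,\pi_L^{a_r})$, introduced in the proof of Lemma \ref{MittagLefflerW}, since $\nu_{L,m}\circ e$ is the defining projection of $\zz$.

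The central and only non-formal step is the identification $\ker(\nu_{L,m})=V_m$. The inclusion $V_m\subseteq\ker(\nu_{L,m})$ is clear, because the valuation sends $(F_L-1)\cdot\Lambda_L$ into $(F_L-1)\cdot\zz$ and $p^m\Lambda_L$ into $p^m\zz$. For the reverse inclusion I would pick a representative $x\in\Lambda_L$ of an element of $\ker(\nu_{L,m})$, so that $v(x)=(F_L-1)\cdot a+p^m b$ for some $a,b\in\zz$, and form
\[
u:=x-(F_L-1)\cdot e(a)-p^m e(b).
\]
By construction $u$ has trivial valuation, hence lies in $\Upsilon_L$, so Lemma \ref{Neukirch} produces $y\in\Upsilon_L$ with $(F_L-1)\cdot y=u$. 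Substituting back yields $x\equiv(F_L-1)\cdot(e(a)+y)\pmod{p^m\Lambda_L}$, whose class lies in $V_m$.

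Commutativity of the diagram is built into the construction, since valuation and all quotient maps are natural in $m$. The three vertical maps are all surjective: $\sigma_{m,n}$ by Lemma \ref{Neu comm diag}, $\pi_{m,n}$ tautologically, and the rightmost map because enlarging $p^m\zz$ to $p^n\zz$ only enlarges the subgroup quotiented by. The Mittag-Leffler condition is then trivial for each of these projective systems of surjections. The main obstacle, if one wishes to call it such, is simply keeping the twisted/additive notation consistent with the multiplicatively-written valuation; once additive notation is committed to throughout, everything reduces to the routine manipulation above plus the single appeal to Lemma \ref{Neukirch}.
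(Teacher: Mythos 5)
Your proof is correct and follows essentially the same route as the paper: both arguments rest on the splitting of $\Lambda_L$ into its unit part $\Upsilon_L$ and the image of the section $e$, with Lemma \ref{Neukirch} supplying the surjectivity of $F_L-1$ on $\Upsilon_L$ needed to show $\ker(\nu_{L,m})\subseteq V_m$. The paper phrases this as a direct computation of $\ker\nu_{L,n}$ as a product, while you argue element-wise, but the content is identical.
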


\begin{proof}
The commutativity of the diagram, the injectivity of the left inclusions and the surjectivity of the maps on the right and of the vertical ones are obvious.

Let us consider the homomorphism $e:\zz\to\Lambda_L$ defined by $e(a_1,\dots,a_r)=(\pi_L^{a_1},\dots,\pi_L^{a_r})$. Then, recalling Lemma \ref{Neukirch} and using an additive notation as usual,
\[\begin{split}\ker\nu_{L,n}&=\Upsilon_L/p^n\Upsilon_L\times e((F_L-1)\cdot\zz+p^n\zz)/e(p^n\zz)\\
&=\left((F_L-1)\cdot\Upsilon_L+p^n\Upsilon_L\right) /p^n\Upsilon_L \times ((F_L-1)\cdot e(\zz)+p^n e(\zz))/p^ne(\Z_p^r)\\
&=\left((F_L-1)\cdot\Lambda_L+p^n\Lambda_L\right) /p^n\Lambda_L=V_n.
\end{split}\]
Therefore there is exactness also at $\Lambda_L/p^m\Lambda_L$.
\end{proof}

We define
\[W := \varprojlim_n W_n, \quad V := \varprojlim_n V_n,\]
and thus, using the above results, we obtain by \cite[Prop.~3.5.7]{Weibel} two short exact sequences 
\[ 0 \lra W \lra \Lambda_L \lra V \lra 0, \]
\[ 0 \lra V \lra \Lambda_L \lra \zz/ (F_L-1)\cdot\zz \lra 0.\]
Splicing together these two sequences we obtain
\begin{equation}\label{twisted ses 117}
0 \lra W \lra \Lambda_L \xrightarrow{F_L - 1} \Lambda_L\stackrel{\nu_L}\lra  \zz / (F_L-1)\cdot\zz  \lra 0.
\end{equation}

We clearly have $W \cong \Lambda_L^{G_L}$ which by Lemma \ref{UL1hat} is isomorphic to $\calF(\p_L)\times\zz^{G_L}$.

Note that we also have a natural $\Gal(L/K)$-action on $\Lambda_L^{G_L}$ and on $\zz/ (F_L-1)\cdot \zz$, defined by extending an element $\sigma\in \Gal(L/K)$ to an element $\tilde\sigma\in\Gal(L_0/K)$.

So we can formulate the following theorem, which generalizes a result by Serre.

\begin{thm}\label{FpnZpomega}
We have an exact sequence
\[0\to\Lambda_L^{G_L}\longrightarrow \Lnr \xrightarrow{(F-1)\times 1}
\Lnr \stackrel{w_L}\longrightarrow\zz/(F_L-1)\cdot\zz\to 0\]
of $\Z_p[\Gal(L/K)]$-modules. Here $w_L$ denotes the sum of the componentwise valuations $\nu_L$.
\end{thm}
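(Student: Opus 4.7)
The plan is to work directly in the explicit presentation $\Lnr \cong \prod_{i=1}^{d_{L/K}} \Lambda_L$ afforded by rule~(\ref{rule 1}), reducing exactness at each spot to facts already available. Set $d := d_{L/K}$. Unwinding~(\ref{rule 1}) gives
\[(F-1)\cdot[x_1,\dots,x_d] = [F_L x_d - x_1,\, x_1 - x_2,\, \dots,\, x_{d-1} - x_d],\]
while by definition $w_L([y_1,\dots,y_d]) = \sum_{i=1}^d \nu_L(y_i)$ in $\zz/(F_L-1)\cdot\zz$.

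For exactness at the left $\Lnr$, the kernel condition forces $x_1 = x_2 = \cdots = x_d$ together with $F_L x_1 = x_1$. Since the $G_L$-action on $\Lambda_L$ factors through the pro-cyclic group topologically generated by $F_L$ (both $L_0/L$ and $\rhonr$ being unramified), this is equivalent to $x_1 \in \Lambda_L^{G_L}$, and we recover the diagonal embedding $\Lambda_L^{G_L} \hookrightarrow \Lnr$ as the kernel of $F-1$.

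For exactness at the middle $\Lnr$, the composition $w_L\circ(F-1)$ telescopes to $\nu_L((F_L-1)x_d)$, which vanishes modulo $(F_L-1)\cdot\zz$ by the $F_L$-equivariance of $\nu_L$. Conversely, given $[y_1,\dots,y_d]$ with $w_L([y_1,\dots,y_d])=0$, I employ back-substitution: once $x_1$ is chosen, the relations $x_i - x_{i+1} = y_{i+1}$ force $x_i = x_1 - (y_2 + \cdots + y_i)$ for $i\ge 2$, and the remaining equation reduces to
\[(F_L - 1)x_1 = y_1 + F_L(y_2 + \cdots + y_d).\]
The right-hand side differs from $y_1+\cdots+y_d$ by $(F_L-1)(y_2+\cdots+y_d)\in(F_L-1)\Lambda_L=\ker\nu_L$, so its image under $\nu_L$ equals $w_L([y_1,\dots,y_d])=0$. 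The exact sequence~(\ref{twisted ses 117}) then produces the required $x_1$, completing the preimage.

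Surjectivity of $w_L$ is immediate by taking $[y,0,\dots,0]$ with $\nu_L(y)$ representing a prescribed class, and $\Z_p[\Gal(L/K)]$-equivariance of all four maps follows from rule~(\ref{rule 2}) (which makes $\Gal(L/K)$ act componentwise through a lift $\tilde\sigma\in\Gal(L_0/K)$), combined with the evident $\Gal(L_0/K)$-equivariance of $\nu_L$ and of $F_L-1$. The only non-formal step is the back-substitution, but it merely repackages~(\ref{twisted ses 117}); no real obstacle is anticipated.
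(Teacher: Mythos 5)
Your proposal is correct and follows essentially the same route as the paper: the paper's proof defers to the computation of \cite[Thm.~4.2.3]{BC2}, whose ingredients are precisely the explicit description of the $\calG$-action on $\Lnr$ via (\ref{rule 1})--(\ref{rule 2}) and the four-term sequence (\ref{twisted ses 117}), which is exactly what your telescoping and back-substitution argument uses. The only point worth making explicit is that the identification $\ker(F_L-1)\cap\Lambda_L=\Lambda_L^{G_L}$ relies on continuity of the action (so that $F_L$-invariance propagates to the closure of $\langle F_L\rangle$), which is the same identification $W\cong\Lambda_L^{G_L}$ the paper records just before the theorem.
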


\begin{proof}
The calculations are very similar to those in the proof of \cite[Thm.~4.2.3]{BC2}. The main ingredients are the definitions and equation (\ref{twisted ses 117}).
\end{proof}

We notice that the above theorem can also be considered as an improvement of \cite[Thm.~4.2.3]{BC2} in the sense that it does not require any assumptions on $\rhonr(F_L)$. Therefore the distinction made in \cite{BC2} between the cases $\chinr(F_L)=1$ and $\chinr(F_L)\neq 1$ turns out to be unnecessary. This allows us also to formulate the next lemma without considering separately the different possibilities.

We let $M/L/K$ be finite extensions such that both $M/K$ and $L/K$ are Galois. We have a canonical  inclusion
\begin{eqnarray*}
\iota = \iota_{M/L} \colon \Lnr &\lra& \Mnr, \\ 
x &\mapsto&[F_L^{d_{M/L}-1}x,F_L^{d_{M/L}-2}x,\dots,x],
\end{eqnarray*} 
where $x=[x_1,\dots,x_{d_{L/K}}] \in \prod_{d_{L/K}}\Lambda_L$.

\begin{lemma}\label{diagforlimit}
For extensions $M/L/K$ there is a commutatiove diagram
\[\xymatrix@C-=0.5cm{0\ar[r]&\Lambda_L^{G_L}\ar[r]\ar[d]^{\subseteq}&\Lnr\ar[rr]^-{(F-1)\times 1}\ar[d]^{{\iota_{M/L}}}&&
\Lnr\ar[r]^-{w_L}\ar[d]^{{\iota_{M/L}}}&\zz/(F_L-1)\cdot\zz\ar[r]
\ar[d]^{e_{M/L}\sum_{i=0}^{d_{M/L-1}}F_L^i}&0\\
0\ar[r]&\Lambda_M^{G_M}\ar[r]&\Mnr\ar[rr]^-{(F-1)\times 1}&&\Mnr\ar[r]^-{w_M}&\zz/(F_M-1)\zz\ar[r]&0}\]
with exact rows, where $e_{M/L}$ and $d_{M/L}$ are the ramification index and the inertia degree of $M/L$.
\end{lemma}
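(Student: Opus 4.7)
The exactness of both rows is exactly the content of Theorem \ref{FpnZpomega} applied to $L$ and to $M$, so the work is to verify commutativity of the three squares.

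For the leftmost square, recall from the proof of Theorem \ref{FpnZpomega} that the injection $\Lambda_L^{G_L}\hookrightarrow\Lnr$ is induced by the diagonal embedding $x\mapsto[x,x,\dots,x]$ (of length $d_{L/K}$) since this is precisely $\ker((F-1)\times 1)$ on $\Lnr$. Applying $\iota_{M/L}$ then yields the length-$d_{M/K}$ tuple with blocks $F_L^jx$, $j=d_{M/L}-1,\dots,0$. For $x\in\Lambda_L^{G_L}$ we have $F_Lx=x$ componentwise, so every block collapses to $x$ and the image is the diagonal embedding of $x$ in $\Mnr$, which matches the natural inclusion $\Lambda_L^{G_L}\subseteq\Lambda_M^{G_M}\hookrightarrow\Mnr$.

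For the middle square, I would write out both compositions using formulas (\ref{rule 1}) for $(F\times 1)$. Given $x=[x_1,\dots,x_{d_{L/K}}]\in\Lnr$, applying $\iota_{M/L}$ produces the concatenation of the blocks $[F_L^jx_1,\dots,F_L^jx_{d_{L/K}}]$ for $j=d_{M/L}-1,\dots,0$; then $(F\times 1)$ in $\Mnr$ cyclically shifts these blocks by one position and applies $F_M$ to the very last entry. Since $F_M$ restricted to $L$ coincides with $F_L^{d_{M/L}}$, the entry $F_Mx_{d_{L/K}}=F_L^{d_{M/L}}x_{d_{L/K}}$ fits in as the new first component. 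Conversely, $\iota_{M/L}$ applied to $(F\times 1)x=[F_Lx_{d_{L/K}},x_1,\dots,x_{d_{L/K}-1}]$ produces the same length-$d_{M/K}$ tuple, because the leading $F_L^{d_{M/L}-1}(F_Lx_{d_{L/K}})$ also equals $F_L^{d_{M/L}}x_{d_{L/K}}$ and the remaining entries are a cyclic shift of the original blocks. A direct comparison entry-by-entry shows the two expressions agree.

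For the rightmost square, I would use that for any $y\in\Lambda_L\subseteq\Lambda_M$, the componentwise valuations satisfy $\nu_M(y)=e_{M/L}\nu_L(y)$ (since the ramification index of $M_0/L_0$ equals $e_{M/L}$), and that $\nu_L$ is $G_L$-equivariant for the twisted action, so $\nu_L(F_L^jy)=F_L^j\nu_L(y)$ in $\zz$. Summing the $d_{M/K}=d_{M/L}\cdot d_{L/K}$ components of $\iota_{M/L}(x)$ thus yields
\[
w_M(\iota_{M/L}(x))=\sum_{j=0}^{d_{M/L}-1}\sum_{i=1}^{d_{L/K}}\nu_M(F_L^jx_i)=e_{M/L}\sum_{j=0}^{d_{M/L}-1}F_L^j\Bigl(\sum_{i}\nu_L(x_i)\Bigr)=e_{M/L}\sum_{j=0}^{d_{M/L}-1}F_L^j\,w_L(x).
\]
To conclude, I must check this is well defined modulo $(F_M-1)\zz$: if $w_L(x)$ is replaced by $w_L(x)+(F_L-1)z$ then the error is $e_{M/L}\bigl(\sum_{j}F_L^j\bigr)(F_L-1)z=e_{M/L}(F_L^{d_{M/L}}-1)z=e_{M/L}(F_M-1)z\in(F_M-1)\zz$, so the right-hand map in the diagram is well defined and the square commutes. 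The main obstacle throughout is bookkeeping of the block structure under $\iota_{M/L}$ and the identification $F_M=F_L^{d_{M/L}}$; once those are set up, each square reduces to a short direct calculation.
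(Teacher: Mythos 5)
Your proposal is correct and is exactly the computation the paper leaves implicit (its proof reads only ``a long but straightforward computation''): exactness of the rows from Theorem \ref{FpnZpomega}, the kernel identified as the diagonal embedding, the block-shift comparison using $F_M=F_L^{d_{M/L}}$ for the middle square, and $\nu_M=e_{M/L}\nu_L$ together with Galois-equivariance of the valuation for the right square, including the well-definedness check modulo $(F_M-1)\zz$. No gaps.
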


\begin{proof}
The proof is a long but straightforward computation.
\end{proof}

Following \cite{BC2}, the next step is to take a direct limit in the previous diagram. In order to get explicit results, we need a couple of lemmas.

\begin{lemma}\label{tildeK}
There exists a finite extension $\tilde K/K$ such that $\calM_{\tilde K}\Q_p^r=\bigcap_{L/K}\calM_L\Q_p^r$. We can assume $N\subseteq \tilde K$.
\end{lemma}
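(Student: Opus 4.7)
My plan is to reduce the infinite intersection over $L/K$ to an intersection of subspaces of the finite-dimensional space $\Q_p^r$, and then realize the common value as $\calM_{\tilde K}\Q_p^r$ by a careful choice of $\tilde K$.

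First I would note that, since $\rhonr$ is unramified, it factors through $\Gal(K^\mathrm{nr}/K)$, and the Frobenius $F_L$ restricts on $K^\mathrm{nr}$ to $F_K^{d_{L/K}}$ (because $|\kappa_L|=|\kappa_K|^{d_{L/K}}$). Setting $U=\rhonr(F_K)$ this gives
\[\calM_L \;=\; \rhonr(F_L)-I \;=\; U^{d_{L/K}}-I,\]
so $\calM_L\Q_p^r$ depends only on the residue degree of $L/K$. Since unramified extensions (which are Galois) realize every positive integer as an inertia degree, the intersection becomes
\[\bigcap_{L/K}\calM_L\Q_p^r \;=\; \bigcap_{n\geq 1}(U^n-I)\Q_p^r \;=:\; W.\]

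Next I would exploit finite-dimensionality of $\Q_p^r$: the descending sequence $W_k:=\bigcap_{i=1}^k (U^i-I)\Q_p^r$ stabilizes, so there is some $k_0$ with $W=W_{k_0}$. Set $m:=\mathrm{lcm}(1,2,\dots,k_0)$. For each $i\leq k_0$, the factorization
\[U^m-I \;=\; (U^i-I)\bigl(I+U^i+U^{2i}+\cdots+U^{m-i}\bigr)\]
shows $(U^m-I)\Q_p^r\subseteq (U^i-I)\Q_p^r$, and hence $(U^m-I)\Q_p^r\subseteq W$. Combined with the trivial inclusion $W\subseteq(U^m-I)\Q_p^r$ this yields $(U^m-I)\Q_p^r=W$, and the same factorization applied to any multiple $m'$ of $m$ shows $(U^{m'}-I)\Q_p^r=W$.

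Finally, to produce $\tilde K$ containing $N$, I would take the unramified extension $K_m/K$ of degree $m$ and set $\tilde K := N\cdot K_m$. Then $N\subseteq\tilde K$, and since $\kappa_{\tilde K}\supseteq\kappa_{K_m}$, the residue degree $d_{\tilde K/K}$ is divisible by $m$. Therefore $\calM_{\tilde K}\Q_p^r=(U^{d_{\tilde K/K}}-I)\Q_p^r=W$, which is exactly the claimed equality. The only (mild) obstacle is that no single $(U^n-I)\Q_p^r$ need equal $W$ on the nose; one must replace an ``optimal'' $n$ by a common multiple of a finite set of $n$'s whose intersection already equals $W$, and push the image down via the factorization above. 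Everything else is straightforward linear algebra.
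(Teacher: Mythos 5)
Your proof is correct, and it is more explicit than the one in the paper. The paper argues indirectly: starting from $L_0=N$, it keeps replacing $L_i$ by a compositum $L_iL$ whenever some $L$ strictly shrinks $\calM_{L_i}\Q_p^r\cap\calM_L\Q_p^r$, and terminates because the dimension of $\calM_{L_i}\Q_p^r$ drops at each step; the last field is $\tilde K$. Your route instead observes up front that $\calM_L=U^{d_{L/K}}-I$ depends only on the residue degree, reduces the intersection to $\bigcap_{n\ge 1}(U^n-I)\Q_p^r$, stabilizes the chain by finite-dimensionality, and then uses the factorization $U^m-I=(U^i-I)(I+U^i+\cdots+U^{m-i})$ for $i\mid m$ to realize the intersection as a single image $(U^m-I)\Q_p^r$ with $m=\mathrm{lcm}(1,\dots,k_0)$, finally taking $\tilde K=N\cdot K_m$. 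Both arguments hinge on the same two facts (finite-dimensionality forces stabilization, and enlarging $L$ can only shrink $\calM_L\Q_p^r$ because residue degrees divide), but the paper leaves the second fact implicit in the inclusion $\calM_{L_{i+1}}\Q_p^r\subseteq\calM_{L_i}\Q_p^r\cap\calM_L\Q_p^r$, whereas you prove it via the geometric-series identity. What your version buys is a concrete description of $\tilde K$ (an explicit unramified enlargement of $N$) and the observation, useful elsewhere, that $\calM_{\tilde K}\Q_p^r=(U^{m'}-I)\Q_p^r$ for every multiple $m'$ of $m$; what the paper's version buys is brevity and independence from the specific shape $\calM_L=U^{d_{L/K}}-I$.
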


\begin{proof}
Let us define a sequence of fields $L_i$ by induction, starting with $L_0=N$. We assume that we have already defined $L_i$ for some $i\in\N$ and that there exists a finite extension $L/K$ such that $\calM_{L_i}\Q_p^r\cap\calM_L\Q_p^r\subsetneq \calM_{L_i}\Q_p^r$. Then we define $L_{i+1}=L_i L$ and we note that $\calM_{L_{i+1}}\Q_p^r\subseteq \calM_{L_i}\Q_p^r\cap\calM_L\Q_p^r\subsetneq \calM_{L_i}\Q_p^r$. In particular this process must terminate since the dimension of $\calM_{L_i}\Q_p^r$ is finite and decreasing at each step. We call $\tilde K$ the last of the $L_i$. By construction it must satisfy the desired properties.
\end{proof}

From now on we will fix a field $\tilde K$ as in the previous lemma.

\begin{lemma}\label{sameasdegree}
Let $M/L$ be a finite extension of local fields containing $\tilde K$ and let $(a_1,\dots,a_r)\in\zz$. Then there exists an extension $\tilde M$ of $M$ such that the classes of the elements
\[e_{M/L}\sum_{i=0}^{d_{M/L}-1}F_L^i\cdot (a_1,\dots, a_r)\]
and
\[[M:L](a_1,\dots, a_r)\]
have the same image under the map
\[e_{\tilde M/M}\sum_{i=0}^{d_{\tilde M/M}-1}F_M^i:\zz/(F_M-1)\zz\to \zz/(F_{\tilde M}-1)\zz\]
of Lemma \ref{diagforlimit}.
\end{lemma}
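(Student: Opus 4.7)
The plan is to show that the difference of the two classes becomes zero in $\zz/(F_{\tilde M} - 1)\zz$ for a suitable $\tilde M$. Setting $\xi := (a_1, \ldots, a_r)$, this difference in $\zz/(F_M - 1)\zz$ is $y := e_{M/L}(\Sigma_{M/L} - d_{M/L})\xi$, where I write $\Sigma_{M/L} := \sum_{i=0}^{d_{M/L}-1} F_L^i$. First I would rewrite $\Sigma_{M/L} - d_{M/L} = \sum_{i=1}^{d_{M/L}-1}(F_L^i - 1)$ and use the identity $F_L^i - 1 = (F_L - 1)(1 + F_L + \cdots + F_L^{i-1}) = \calM_L \cdot (1 + F_L + \cdots + F_L^{i-1})$ to conclude that $y = e_{M/L}\calM_L\, Q(F_L)\,\xi$ for an explicit polynomial $Q\in\Z[T]$; in particular $y\in\calM_L\Z_p^r$.

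Next I would invoke Lemma \ref{tildeK}. Since $L,M\supseteq\tilde K$, writing $F_L = F_{\tilde K}^{d_{L/\tilde K}}$ gives the factorisation $\calM_L = \calM_{\tilde K}\,\Sigma_{L/\tilde K}$, so $\calM_L\Q_p^r\subseteq\calM_{\tilde K}\Q_p^r$; the reverse inclusion is built into the definition of $\tilde K$, and therefore $\calM_L\Q_p^r = \calM_{\tilde K}\Q_p^r = \calM_M\Q_p^r$. Consequently $y\in\calM_L\Z_p^r\subseteq\calM_M\Q_p^r\cap\Z_p^r$. Applying Lemma \ref{linalglemma} to $R=\Z_p$ and $f=\calM_M$ produces $\omega_M\in\N$ with $\calM_M\Q_p^r\cap p^{\omega_M}\Z_p^r\subseteq\calM_M\Z_p^r$; hence $p^{\omega_M}y\in\calM_M\Z_p^r$.

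Finally I would take $\tilde M$ to be any totally ramified extension of $M$ of degree $p^{\omega_M}$, for instance $\tilde M = M(\pi_M^{1/p^{\omega_M}})$ (if the ambient construction requires $\tilde M/K$ to be Galois, one can enlarge $\tilde M$ to a Galois closure over $K$; only $p^{\omega_M}\mid e_{\tilde M/M}$ will be used). Then $d_{\tilde M/M} = 1$, so $F_{\tilde M} = F_M$, $\calM_{\tilde M} = \calM_M$ and $\Sigma_{\tilde M/M} = I$, while $e_{\tilde M/M} = p^{\omega_M}$. The map of Lemma \ref{diagforlimit} therefore acts on $\zz/(F_M - 1)\zz = \zz/(F_{\tilde M}-1)\zz$ as multiplication by $p^{\omega_M}$, and the image of $y$ is $p^{\omega_M}y\in\calM_M\Z_p^r = \calM_{\tilde M}\Z_p^r$, which is zero; the two classes therefore have equal images, as asserted.

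The crucial and only non-routine step is the identity $\calM_L\Q_p^r = \calM_M\Q_p^r$, which is precisely what Lemma \ref{tildeK} was designed to deliver: it is what allows Lemma \ref{linalglemma} to be applied to $\calM_M$ rather than to $\calM_L$ and thereby to clear the denominators of $y$ with a bounded power of $p$. Without this coincidence of $\Q_p$-subspaces one could not absorb $y$ into $\calM_{\tilde M}\Z_p^r$ by enlarging the ramification, and the argument would fail.
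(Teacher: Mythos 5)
Your proof is correct and follows essentially the same route as the paper's: compute the difference as an element of $\calM_L\Z_p^r\subseteq\calM_M\Q_p^r$ using the factorization through $F_L-1$ and the equality $\calM_L\Q_p^r=\calM_M\Q_p^r$ coming from Lemma \ref{tildeK}, clear denominators with a power of $p$, and absorb that power into $e_{\tilde M/M}$ via a totally ramified extension. The only (harmless) difference is that you invoke Lemma \ref{linalglemma} to obtain a uniform exponent $\omega_M$, whereas the paper simply writes the difference as $\calM_M v$ for a specific $v\in\Q_p^r$ and chooses $e$ with $p^e v\in\Z_p^r$ ad hoc.
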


\begin{proof}
We calculate
\[\begin{split}
&e_{M/L}\sum_{i=0}^{d_{M/L}-1}F_L^i\cdot (a_1,\dots,a_r)-[M:L](a_1,\dots,a_r)\\
&\qquad\qquad=e_{M/L}\sum_{i=0}^{d_{M/L}-1}(F_L^i-1)\cdot (a_1,\dots,a_r)\\
&\qquad\qquad=e_{M/L}(F_L-1)\sum_{i=0}^{d_{M/L}-1}\sum_{j=0}^{i-1}F_L^j\cdot (a_1,\dots,a_r)\\
&\qquad\qquad=e_{M/L}\calM_L\sum_{i=0}^{d_{M/L}-1}\sum_{j=0}^{i-1}\rhonr(F_L)^j (a_1,\dots,a_r).
\end{split}\]
Since $L$ and $M$ both contain $\tilde K$, it follows that $\calM_L\Q_p^r=\calM_M\Q_p^r$. Hence there exists a vector $v\in\Q_p^r$ such that
\[e_{M/L}\sum_{i=0}^{d_{M/L}-1}F_L^i\cdot (a_1,\dots,a_r)-[M:L](a_1,\dots,a_r)=\calM_Mv=(F_M-1)\cdot v.\]
Clearly, for some $e$, we have $p^ev\in\Z_p^r$. Let $\tilde M$ be a totally ramified extension of $M$ of degree $p^e$. Then we obtain
\[\begin{split}
&e_{\tilde M/M}\sum_{i=0}^{d_{\tilde M/M}-1}F_M^i\cdot \left(e_{M/L}\sum_{i=0}^{d_{M/L}-1}F_L^i\cdot (a_1,\dots,a_r)-[M:L](a_1,\dots,a_r)\right)\\
&\qquad=p^e\left(\sum_{i=0}^{d_{\tilde M/M}-1}F_M^i\right)(F_M-1)\cdot(v)=(F_{\tilde M}-1)\cdot(p^ev)\in (F_{\tilde M}-1)\cdot\zz.
\end{split}\]
\end{proof}

\begin{lemma}\label{QpMQp}
There is an isomorphism
\[\Q_p^r/\calM_{\tilde K}\Q_p^r(\rhonr)\cong \varinjlim_L\zz/(F_L-1)\cdot\zz,\]
where the morphisms of the direct system are those of Lemma \ref{diagforlimit}.
\end{lemma}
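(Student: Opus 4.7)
The plan is to construct the isomorphism explicitly on a cofinal subsystem. By Lemma \ref{tildeK}, the subsystem of Galois extensions $L/K$ with $\tilde K \subseteq L$ is cofinal in the direct system. A preliminary step is to observe that enlarging $\tilde K$ by a finite extension does not alter $\calM_{\tilde K}\Q_p^r$: for any finite $\tilde K' \supseteq \tilde K$, the inclusion $\calM_{\tilde K'}\Q_p^r \supseteq \calM_{\tilde K}\Q_p^r$ holds because $\calM_{\tilde K'}\Q_p^r$ is one of the terms in the intersection defining $\tilde K$, while the reverse inclusion follows from the factorization $\rhonr(F_{\tilde K'}) - I = \calM_{\tilde K}\cdot\sum_{i=0}^{d-1}\rhonr(F_{\tilde K})^i$ with $d = d_{\tilde K'/\tilde K}$. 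Hence we may assume without loss of generality that $\zeta_p \in \tilde K$, which forces $L(\zeta_{p^N})/L$ to be totally ramified for every $L \supseteq \tilde K$ and $N \geq 1$.

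For such $L$ I would define
\[
\phi_L\colon \zz/(F_L-1)\cdot\zz \longrightarrow \Q_p^r/\calM_{\tilde K}\Q_p^r(\rhonr),\qquad (a_1,\ldots,a_r) \longmapsto \frac{1}{[L:\tilde K]}(a_1,\ldots,a_r) \bmod \calM_{\tilde K}\Q_p^r.
\]
Well-definedness is immediate from $\calM_L\Z_p^r \subseteq \calM_L\Q_p^r = \calM_{\tilde K}\Q_p^r$, and compatibility with the transition maps of Lemma \ref{diagforlimit} is an immediate consequence of the computation inside the proof of Lemma \ref{sameasdegree}: modulo $\calM_M\Q_p^r = \calM_{\tilde K}\Q_p^r$ the operator $e_{M/L}\sum_{i=0}^{d_{M/L}-1}F_L^i$ acts as scalar multiplication by $[M:L]$, so $\phi_M([M:L](a_1,\ldots,a_r)) = \phi_L((a_1,\ldots,a_r))$. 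This produces an induced map $\Phi\colon \varinjlim_L \zz/(F_L-1)\cdot\zz \to \Q_p^r/\calM_{\tilde K}\Q_p^r$.

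Surjectivity is easy: a representative $v/p^n$ of a target class is the image of $\tfrac{[L:\tilde K]}{p^n}\,v \in \Z_p^r/\calM_L\Z_p^r$ under $\phi_L$, for any $L = \tilde K(\zeta_{p^N})$ with $v_p([L:\tilde K]) \geq n$.

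The main obstacle is injectivity. Suppose $\phi_L((a_1,\ldots,a_r)) = 0$, so that $(a_1,\ldots,a_r) \in \calM_{\tilde K}\Q_p^r \cap \Z_p^r$; then Lemma \ref{linalglemma}, applied with $R = \Z_p$ and the endomorphism induced by $\calM_L$, yields $p^{\omega_L}(a_1,\ldots,a_r) \in \calM_L\Z_p^r$. Choosing $M = L(\zeta_{p^N})$ with $v_p([M:L]) \geq \omega_L$, the extension $M/L$ is totally ramified, so $F_M = F_L$, $\calM_M = \calM_L$, and the transition of Lemma \ref{diagforlimit} collapses to multiplication by $[M:L]$. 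Since $[M:L]/p^{\omega_L} \in \Z_p$, we deduce $[M:L](a_1,\ldots,a_r) \in \calM_L\Z_p^r = \calM_M\Z_p^r$, so that the image of $(a_1,\ldots,a_r)$ in $\zz/(F_M-1)\cdot\zz$, and hence in the direct limit, is zero.
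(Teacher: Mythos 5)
Your construction runs the paper's argument in the opposite direction (the paper defines a surjection $\Q_p^r\to\varinjlim_L\zz/(F_L-1)\cdot\zz$ with kernel $\calM_{\tilde K}\Q_p^r$, you define its inverse $\phi_L(a)=[L:\tilde K]^{-1}a$ on a cofinal subsystem), and the well-definedness, compatibility and surjectivity steps are correct and rest on the same inputs. The injectivity step, however, contains a genuine error: the claim that $\zeta_p\in\tilde K$ forces $L(\zeta_{p^N})/L$ to be totally ramified for every $L\supseteq\tilde K$ is false. For odd $p$, let $K_p$ be the unramified extension of $\Q_p(\zeta_p)$ of degree $p$; then $\Gal(K_p(\zeta_{p^2})/\Q_p(\zeta_p))\cong\Z/p\times\Z/p$ with inertia subgroup equal to the second (cyclotomic) factor, and if $L$ is the fixed field of the diagonal subgroup, then $\zeta_p\in L$, $L(\zeta_{p^2})=K_p(\zeta_{p^2})$, and $L(\zeta_{p^2})/L$ is \emph{unramified} of degree $p$. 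The same diagonal construction produces such an $L$ over any $\tilde K$ containing $\zeta_p$ (this is the familiar fact that the first layers of the cyclotomic $\Z_p$-extension of a local field need not be totally ramified). For such an $L$ your argument breaks down exactly where you use total ramification: with $M=L(\zeta_{p^N})$ one may have $d_{M/L}>1$ and $e_{M/L}$ small, so $F_M\neq F_L$, $\calM_M\neq\calM_L$, the transition map is $e_{M/L}\sum_{i=0}^{d_{M/L}-1}F_L^i$ rather than multiplication by $[M:L]$, and $p^{\omega_L}a\in\calM_L\Z_p^r$ no longer implies that the image of $a$ lies in $\calM_M\Z_p^r$.

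The gap is local and admits two repairs. Either take for $M$ an arbitrary totally ramified extension of $L$ of degree $p^{\omega_L}$ (root of an Eisenstein polynomial), which is what the paper does; or keep the cyclotomic extensions but demand only $v_p(e_{M/L})\ge\omega_L$, which is attainable since $e_{L(\zeta_{p^N})/L}\ge p^{N-1}(p-1)/e_{L/\Q_p}\to\infty$. In the latter case, writing $p^{\omega_L}a=\calM_Lb$ with $b\in\Z_p^r$ and using $\calM_M=\calM_L\sum_{i=0}^{d_{M/L}-1}\rhonr(F_L)^i$, one computes $e_{M/L}\sum_{i=0}^{d_{M/L}-1}\rhonr(F_L)^i\,a=\bigl(e_{M/L}/p^{\omega_L}\bigr)\calM_M b\in\calM_M\Z_p^r$, so the class dies at level $M$ without any total ramification hypothesis. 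This second repair also preserves the (legitimate) advantage of your cyclotomic choice, namely that all fields in sight remain Galois over $K$, a point the paper's use of Eisenstein extensions glosses over.
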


\begin{proof}
We consider a tower of totally ramified extensions $\tilde K\subseteq \tilde K_1\subseteq \tilde K_2\subseteq\dots$, each one of degree $p$. Then we define $\Phi:\Q_p^r\to \varinjlim_L\Z_p^r/\calM_L\Z_p^r(\rhonr)$ by
\[\Phi\left(\frac{a_1}{p^e},\dots,\frac{a_r}{p^e}\right)=((a_1,\dots,a_r)\pmod{\calM_{\tilde K_e}\Z_p^r})_{\tilde K_e},\]
where $a_1,\dots,a_r\in\Z_p$ and $e\in\N$. This is clearly well-defined. To prove surjectivity, let $((a_1,\dots,a_r)\pmod{\calM_L\Z_p^r})_L$, where we may assume that $L\supseteq \tilde K$. Let $e\in\N$ and $\alpha\in\Z$ be coprime to $p$ and such that $[L:\tilde K]=p^e\alpha$. By construction the element $((\alpha^{-1} a_1,\dots,\alpha^{-1} a_r)\pmod{\calM_{\tilde K_e}\Z_p^r})_{\tilde K_e}$ is in the image of $\Phi$ and, by Lemma \ref{sameasdegree}, it represents the same element as
\[((\alpha^{-1} [L\tilde K_e:\tilde K_e] a_1,\dots,\alpha^{-1} [L\tilde K_e:\tilde K_e] a_r)\pmod{\calM_{L\tilde K_e}\Z_p^r})_{L\tilde K_e}\]
in $\varinjlim_L\Z_p^r/\calM_L\Z_p^r(\rhonr)$. Note that
\[\alpha^{-1}[L\tilde K_e:\tilde K_e]=\frac{[L\tilde K_e:\tilde K]}{\alpha[\tilde K_e:\tilde K]}=\frac{[L\tilde K_e:\tilde K]}{\alpha p^e}=\frac{[L\tilde K_e:\tilde K]}{[L:\tilde K]}=[L\tilde K_e:L]\]
and therefore
\[(([L\tilde K_e:L] a_1,\dots,[L\tilde K_e:L] a_r)\pmod{\calM_{L\tilde K_e}\Z_p^r})_{L\tilde K_e}\]
is in the image of $\Phi$. Again by Lemma \ref{sameasdegree}, this coincides with the element
\[((a_1,\dots,a_r)\pmod{\calM_L\Z_p^r})_L,\]
and this concludes the proof of surjectivity.

It remains to be proved that $\ker\Phi=\calM_{\tilde K}\Q_p^r$. Let us assume that $\left(\frac{a_1}{p^e},\dots,\frac{a_r}{p^e}\right)\in\ker\Phi$. Then $((a_1,\dots,a_r)\pmod{\calM_{\tilde K_e}\Z_p^r})_{\tilde K_e}=0$ in $\varinjlim_L\Z_p^r/\calM_L\Z_p^r(\rhonr)$. This means that there exists an extension $L/\tilde K_e$ such that
\[e_{L/\tilde K_e}\sum_{i=1}^{d_{L/\tilde K_e}-1}F_{\tilde K_e}^i\cdot (a_1,\dots,a_r)\equiv 0\pmod{\calM_L\Z_p^r}.\]
By the definition of $\calM_{\tilde K_e}$ we also know that
\[\begin{split}e_{L/\tilde K_e}\sum_{i=1}^{d_{L/\tilde K_e}-1}F_{\tilde K_e}^i\cdot (a_1,\dots,a_r)&\equiv e_{L/\tilde K_e}d_{L/\tilde K_e} (a_1,\dots,a_r) \pmod{\calM_{\tilde K_e}\Z_p^r}\\ &\equiv [L:\tilde K_e](a_1,\dots,a_r) \pmod{\calM_{\tilde K_e}\Z_p^r}.\end{split}\]
Since $\calM_L\Z_p^r\subseteq \calM_{\tilde K_e}\Z_p^r$, we deduce that $[L:\tilde K_e](a_1,\dots,a_r)\in \calM_{\tilde K_e}\Z_p^r$ and therefore $(a_1,\dots,a_r)\in \calM_{\tilde K_e}\Q_p^r=\calM_{\tilde K}\Q_p^r$. This proves the inclusion $\ker\Phi\subseteq \calM_{\tilde K}\Q_p^r$.

Conversely, let $\calM_{\tilde K}\left(\frac{a_1}{p^e},\dots,\frac{a_r}{p^e}\right)\in \calM_{\tilde K}\Q_p^r$, then
\[\Phi\left(\calM_{\tilde K}\left(\frac{a_1}{p^e},\dots,\frac{a_r}{p^e}\right)\right)=(\calM_{\tilde K}(a_1,\dots,a_r)\pmod{\calM_{\tilde K_e}\Z_p^r})_{\tilde K_e},\]

By the properties of $\tilde K$, we know that there exists a vector $v\in\Q_p^r$ such that
\[\calM_{\tilde K}(a_1,\dots,a_r)=\calM_{\tilde K_e}v.\]
It is also clear that there exists an $\tilde e\in\N$ such that $p^{\tilde e}v\in\Z_p^r$. Then we consider a totally ramified extension $\tilde L$ of $\tilde K_e$ of degree $p^{\tilde e}$. Because of total ramification, we have $\calM_{\tilde L}=\calM_{\tilde K_e}$, and hence we obtain
\[\begin{split}\Phi\left(\calM_{\tilde K}\left(\frac{a_1}{p^e},\dots,\frac{a_r}{p^e}\right)\right)&=(\calM_{\tilde K_e}(p^{\tilde e}v)\pmod{\calM_{\tilde L}\Z_p^r})_{\tilde L}\\
&=(\calM_{\tilde L}(p^{\tilde e}v)\pmod{\calM_{\tilde L}\Z_p^r})_{\tilde L}=0.\end{split}\]
Therefore we also have the inclusion $\calM_{\tilde K}\Q_p^r\subseteq \ker\Phi$.
\end{proof}

\begin{lemma}\label{FpKLurexact}
We have an exact sequence
\[0\rightarrow\varinjlim_L\Lambda_L^{G_L}\rightarrow\varinjlim_L\Lnr\xrightarrow{(F-1)\times 1}\varinjlim_L\Lnr \rightarrow \Q_p^r/\calM_{\tilde K}\Q_p^r(\rhonr)\rightarrow 0.\]
\end{lemma}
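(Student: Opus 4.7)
The plan is to obtain the sequence by passing to the filtered direct limit in the four-term exact sequence supplied by Theorem \ref{FpnZpomega}, and then rewrite the last term via Lemma \ref{QpMQp}.

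First I would note that Lemma \ref{diagforlimit} shows precisely that the family of exact sequences
\[0\to\Lambda_L^{G_L}\to \Lnr \xrightarrow{(F-1)\times 1} \Lnr \xrightarrow{w_L}\zz/(F_L-1)\cdot\zz\to 0,\]
indexed by finite Galois extensions $L/K$ (a filtered system, since any two such extensions sit inside their compositum, which again is Galois over $K$), forms a direct system of four-term exact sequences. The transition morphisms are the vertical arrows $\iota_{M/L}$, the inclusion $\Lambda_L^{G_L}\subseteq\Lambda_M^{G_M}$ on the left, and the map $e_{M/L}\sum_{i=0}^{d_{M/L}-1}F_L^i$ on the right; commutativity is exactly the content of Lemma \ref{diagforlimit}.

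Next, I would invoke the fact that filtered direct limits in the category of abelian groups are exact. Equivalently, one may split the four-term sequence into the two short exact sequences
\[0\to\Lambda_L^{G_L}\to\Lnr\to (F-1)\times 1\,\Lnr\to 0,\qquad 0\to(F-1)\times 1\,\Lnr\to\Lnr\xrightarrow{w_L}\zz/(F_L-1)\cdot\zz\to 0,\]
take $\varinjlim_L$ of each (which preserves exactness), and splice them back together. Either way, we obtain an exact sequence
\[0\to\varinjlim_L\Lambda_L^{G_L}\to \varinjlim_L\Lnr \xrightarrow{(F-1)\times 1} \varinjlim_L\Lnr\xrightarrow{w}\varinjlim_L\zz/(F_L-1)\cdot\zz\to 0,\]
where $w$ is the map induced by the compatible family $\{w_L\}_L$.

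Finally, Lemma \ref{QpMQp} provides a canonical identification
\[\varinjlim_L\zz/(F_L-1)\cdot\zz \;\cong\; \Q_p^r/\calM_{\tilde K}\Q_p^r(\rhonr),\]
and substituting this into the fourth position yields exactly the sequence in the statement. There is no real obstacle here: once the compatibility of the system has been verified (which is already done in Lemma \ref{diagforlimit}) and the last term has been identified (Lemma \ref{QpMQp}), the proof reduces to the standard exactness of filtered colimits of abelian groups.
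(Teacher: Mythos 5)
Your proposal is correct and matches the paper's own argument exactly: the published proof simply cites Lemma \ref{diagforlimit}, the exactness of the direct limit functor, and Lemma \ref{QpMQp}. You have merely spelled out the same three steps in more detail.
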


\begin{proof}
We use Lemma \ref{diagforlimit}, the fact that the direct limit functor is exact and Lemma \ref{QpMQp}.
\end{proof}

Our next step will be to apply the $G_N$-fixed point functor to a complex obtained from the above exact sequence in order to find an explicit representative of $R\Gamma\left(N, \Lambda_N^{G_N}\right)$.

\begin{lemma}\label{LurGcanNur}
We have
\[\left( \Mnr \right)^{\Gal(M/L)}=\iota_{M/L}\left( \Lnr \right).\]
In particular,
\begin{equation}\label{GN invariants}
\left( \varinjlim_L\Lnr \right)^{G_N} = \Nnr,
\end{equation}
where the direct limit is taken over all finite Galois extensions $L/K$. 
\end{lemma}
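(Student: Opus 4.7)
The plan is to first establish the identity $(\Mnr)^{\Gal(M/L)}=\iota_{M/L}(\Lnr)$ by a direct computation using the action formulas (\ref{rule 1}) and (\ref{rule 2}), and then derive (\ref{GN invariants}) by a direct-limit argument restricted to the cofinal subsystem of Galois extensions $L/K$ containing $N$.

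For the first identity, I identify $\Mnr\cong\prod_{i=1}^{d_{M/K}}\Lambda_M$ and regard $\Gal(M/L)\hookrightarrow\calG_M$ via $\sigma\mapsto(1\times\sigma)$. This subgroup is generated by the inertia $I_{M/L}$ together with a lift of the Frobenius $F_L\in\Gal(M/L)/I_{M/L}$. Elements of $I_{M/L}$ act diagonally on the slots by (\ref{rule 2}), through their canonical lifts in $\Gal(M_0/K_0)$; the Frobenius $F_L$, after decomposing $(1\times F_L)=(F\times 1)^{-d_{L/K}}(F^{d_{L/K}}\times F_L)$ in $\calG_M$ and applying (\ref{rule 1}) and (\ref{rule 2}) respectively, acts as a cyclic shift of the slots by $d_{L/K}$ positions composed with a diagonal Frobenius twist. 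The inclusion $\iota_{M/L}(\Lnr)\subseteq(\Mnr)^{\Gal(M/L)}$ follows because the entries of $\iota_{M/L}(x)$ lie in $\Lambda_L=\Lambda_M^{I_{M/L}}$ (which, since $\rhonr$ is unramified, reduces to the Galois descent $\widehat{L_0^\times}=(\widehat{M_0^\times})^{\Gal(M_0/L_0)}$), and the block-cyclic pattern $[F_L^{d_{M/L}-1}x,\dots,F_Lx,x]$ is manifestly stable under the computed $F_L$-action. For the reverse inclusion, given a $\Gal(M/L)$-fixed $y\in\Mnr$, the $I_{M/L}$-invariance forces every entry into $\Lambda_L$, and the $F_L$-invariance then forces the entries to follow the Frobenius-twisted cyclic pattern, yielding a unique $x=(y_1,\dots,y_{d_{L/K}})\in\Lnr$ with $\iota_{M/L}(x)=y$.

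For (\ref{GN invariants}), I restrict to the cofinal subsystem of Galois $L/K$ containing $N$. The action of $G_N$ on $\Lnr$ factors through $G_N\twoheadrightarrow\Gal(L_0/N)\hookrightarrow\calG_L$, and I would show that its fixed-point set coincides with that of $\Gal(L/N)\subseteq\Gal(L/K)\hookrightarrow\calG_L$ acting through the second factor, exploiting that the additional Frobenius contribution from $\Gal(L_0/L)$ already stabilizes the block-cyclic image of $\iota_{L/N}$. The first identity, applied with $K$ replaced by $N$ and $M$ replaced by $L$, then yields $(\Lnr)^{G_N}=\iota_{L/N}(\Nnr)$, and since direct limits commute with continuous $G_N$-invariants, passing to the limit produces $\Nnr$. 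The principal obstacle is the reverse inclusion in the first identity: one has to coordinate the diagonal rule (\ref{rule 2}) with the shift rule (\ref{rule 1}) to extract the precise Frobenius-twisted cyclic structure from an arbitrary fixed element; the second identity is then essentially formal.
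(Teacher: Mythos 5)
Your treatment of the first identity is essentially the intended one (the paper itself only cites \cite[Lemma 4.2.6]{BC2}): inertia acts diagonally through $\Gal(M_0/L_0)$ and unramified descent puts the entries into $\Lambda_L$, while the Frobenius, decomposed via (\ref{rule 1}) and (\ref{rule 2}), forces the block-cyclic pattern. That part of the plan is sound, modulo the standard descent argument for $p$-completions that you correctly flag.

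The derivation of (\ref{GN invariants}) contains a genuine error. First, a small one: applying the first identity ``with $K$ replaced by $N$'' changes the modules, since $\calI_{L/N}(\rhonr)=\prod_{d_{L/N}}\Lambda_L$ is not $\Lnr=\prod_{d_{L/K}}\Lambda_L$; what you need is the identity over the \emph{same} base $K$ with the pair $(M,L)$ replaced by $(L,N)$, giving $\left(\Lnr\right)^{\Gal(L/N)}=\iota_{L/N}\left(\Nnr\right)$. The serious problem is your claim that the fixed points of the image of $\Gal(L_0/N)$ in $\calG$ coincide with those of $1\times\Gal(L/N)$ because ``the additional Frobenius contribution from $\Gal(L_0/L)$ already stabilizes the block-cyclic image of $\iota_{L/N}$.'' The image of $\Gal(L_0/L)$ in $\calG$ is topologically generated by $(F\times 1)^{d_{L/K}}$, which by (\ref{rule 1}) acts on $\Lnr$ as the diagonal Frobenius $F_L$; on the image of $\iota_{L/N}$ it sends $\iota_{L/N}(x)$ to $\iota_{L/N}(F_L\cdot x)$. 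It therefore preserves the submodule $\iota_{L/N}(\Nnr)$ but fixes pointwise only $\iota_{L/N}\bigl(\prod_{d_{N/K}}\Lambda_N^{F_L}\bigr)$, which is much smaller than $\iota_{L/N}(\Nnr)$, and this defect persists in the colimit (an element of $\Nnr$ not fixed by any power of $F_N$ never becomes invariant at any finite level). So with the action of $G_N$ through $\Gal(L_0/N)\hookrightarrow\calG$ the asserted equality fails. The invariants in (\ref{GN invariants}) must be taken for the action of $G_N$ through its quotient $\Gal(L/N)$ embedded in the second factor of $\calG$ --- this is the $\Gal(L/K)$-module structure the paper uses throughout, e.g.\ in Lemma \ref{LurInd} and in the identification $H^i(N,\varinjlim_L\Lnr)=\varinjlim_L H^i(\Gal(L/N),\Lnr)$ of Lemma \ref{acyclic}. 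With that reading, no comparison of two actions is needed: the ``in particular'' follows formally from the first identity, cofinality of the $L\supseteq N$, and the relation $\iota_{M/L}\circ\iota_{L/N}=\iota_{M/N}$.
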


\begin{proof}
Up to some obvious modifications, the proof is the same as in \cite[Lemma 4.2.6]{BC2}.
\end{proof}

\begin{lemma}\label{acyclic}
The $G_K$-modules $\varinjlim_L \Lnr$ and $\Q_p^r/\calM_{\tilde K}\Q_p^r(\rhonr)$ are acyclic with respect to the fixed point functor $A \mapsto A^{G_N}$.
\end{lemma}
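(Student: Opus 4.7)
\emph{Plan.} The plan is to treat the two modules separately, in each case combining a reduction to a direct limit with a cohomological triviality statement.

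For $M_1 := \varinjlim_L \Lnr$, I would first observe that continuous $G_N$-cohomology commutes with the filtered colimit (indexed by finite Galois extensions $L/K$ containing $N$, which form a cofinal system), reducing the task to the vanishing of $H^i(G_N, \Lnr)$ for every such $L$ and every $i \geq 1$. Lemma \ref{LurInd} exhibits $\Lnr$ as $\Ind_{I_{L/K}}^{\Gal(L/K)}(\prod_r \widehat{L_0^\times})$, and in particular as $\Gal(L/K)$-cohomologically trivial. The $G_K$-action on $\Lnr$ factors through $\Gal(L_0/K) \hookrightarrow \calG$, and for $N \subseteq L$ the $G_N$-action factors through a finite subquotient of $\calG$. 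I would then use the concrete description $\Lnr \cong \prod_{d_{L/K}} \Lambda_L$ with action rules (\ref{rule 1})--(\ref{rule 2}) to identify $\Lnr$ as a $G_N$-module coinduced from a suitable subgroup, and apply Shapiro's lemma together with the cohomological triviality of Lemma \ref{LurInd} to read off the vanishing.

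For $M_2 := \Q_p^r/\calM_{\tilde K}\Q_p^r(\rhonr)$, the key point is that this is a finite-dimensional $\Q_p$-vector space on which $G_{\tilde K}$ acts trivially. Indeed, for $\sigma \in G_{\tilde K}$ the unramified element $\rhonr(\sigma)$ is a power of $\rhonr(F_{\tilde K})$, so $\rhonr(\sigma) - I$ is divisible by $\calM_{\tilde K} = \rhonr(F_{\tilde K}) - I$ and hence annihilates $M_2$. After replacing $\tilde K$ by its Galois closure over $N$---which is allowed since enlarging $\tilde K$ does not alter $\calM_{\tilde K}\Q_p^r = \bigcap_L \calM_L \Q_p^r$ by Lemma \ref{tildeK}---the $G_N$-action on $M_2$ factors through the finite group $\Gal(\tilde K/N)$. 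Acyclicity then follows from the standard vanishing of finite-group cohomology with coefficients in a uniquely divisible module.

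The main obstacle is the first part, and specifically the need to reconcile the $G_N$-action on $\Lnr$, which comes from $\Gal(L_0/K) \hookrightarrow \calG$ and mixes both factors of $\calG = \Gal(K_0/K) \times \Gal(L/K)$, with the $\Gal(L/K)$-action used in Lemma \ref{LurInd} to get cohomological triviality. These two actions do not agree, so the transfer of triviality from one to the other must be made explicit, either via a Shapiro/Mackey double-coset computation on the induced structure or by a direct calculation from the explicit rules for the $\calG$-action. Once this bridge is established, the rest of the argument (the direct-limit commutation for $M_1$ and the finite-group cohomology statement for $M_2$) is routine.
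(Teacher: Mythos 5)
Your proposal matches the paper's own proof in both structure and substance: for $\varinjlim_L \Lnr$ the paper likewise commutes cohomology with the filtered colimit (citing \cite[Prop.~1.5.1]{NSW}) and invokes the cohomological triviality supplied by Lemma \ref{LurInd}, while for $\Q_p^r/\calM_{\tilde K}\Q_p^r(\rhonr)$ it appeals to divisibility via \cite[Prop.~1.6.2]{NSW}, which is exactly the vanishing you derive by hand from the uniquely divisible ($\Q_p$-vector space) structure. The only difference is one of detail: the reconciliation of the $G_N$-action with the $\Gal(L/K)$-action that you single out as the main obstacle is not elaborated in the paper, whose entire proof consists of the two citations above.
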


\begin{proof}
By \cite[Prop.~1.5.1]{NSW} we have for $i >0$
\[H^i(N, \varinjlim_L\Lnr) = \varinjlim_L H^i(\Gal(L/N), \Lnr)\]
which is trivial by Lemma \ref{LurInd}. The module $\Q_p^r/\calM_{\tilde K}\Q_p^r(\rhonr)$ is cohomologically trivial since it is uniquely divisible and the representation is continuous because $G_{\tilde K}$ acts trivially (see \cite[Prop.~1.6.2]{NSW}).
\end{proof}

\begin{lemma}\label{fixedpoints}
We have
\begin{equation}\label{QpMQpGN}(\Q_p^r/\calM_{\tilde K}\Q_p^r(\rhonr))^{G_N}=\Q_p^r/\calM_{N}\Q_p^r(\rhonr).\end{equation}
\end{lemma}

\begin{proof}
By the previous lemma the Tate cohomology of $\Q_p^r/\calM_{\tilde K}\Q_p^r(\rhonr)$ is trivial. We will use this to prove that the following sequence is exact:
\[0\to \calM_M\Q_p^r/\calM_{\tilde K}\Q_p^r(\rhonr)\to \Q_p^r/\calM_{\tilde K}\Q_p^r(\rhonr)\xrightarrow{T_{\tilde K/N}} (\Q_p^r/\calM_{\tilde K}\Q_p^r(\rhonr))^{G_N}\to 0.\]
The first map is injective since it is an inclusion.

Exactness in the middle term follows from $\hat H^{-1}(\Gal(\tilde K/N),\Q_p^r/\calM_{\tilde K}\Q_p^r(\rhonr))=0$.

From $\hat H^{0}(\Gal(\tilde K/N),\Q_p^r/\calM_{\tilde K}\Q_p^r(\rhonr))=0$ we deduce
\[(\Q_p^r/\calM_{\tilde K}\Q_p^r(\rhonr))^{G_N}=T_{\tilde K/N}(\Q_p^r/\calM_{\tilde K}\Q_p^r(\rhonr)),\]
i.e. the surjectivity of the trace map in the sequence. Therefore
\[(\Q_p^r/\calM_{\tilde K}\Q_p^r(\rhonr))^{G_N}\cong (\Q_p^r/\calM_{\tilde K}\Q_p^r)/(\calM_M\Q_p^r/\calM_{\tilde K}\Q_p^r)(\rhonr)\cong \Q_p^r/\calM_{N}\Q_p^r(\rhonr).\]

\end{proof}

We can now state and prove the following theorem. The proof works as in \cite[Thm.~4.2.8]{BC2}, but we prefer to rewrite it in detail for clarity.

\begin{thm}\label{fundamental1}
The complex
\[C^\bullet_{N, \rhonr} =\left[ \Nnr\xrightarrow{(F-1)\times 1}\Nnr\to\Q_p^r/\calM_{N}\Q_p^r(\rhonr)\right]\]
with non-trivial modules in degree $0$, $1$ and $2$ represents $R\Gamma\left(N, \Lambda_N^{G_N}\right)$.
\end{thm}

\begin{proof}
We write $X^\bullet$ for the standard resolution (as defined in 
\cite[Sec.~1.2]{NSW}) and $I^\bullet$ for an injective resolution of $\varinjlim_L\Lambda_L^{G_L}$. Let $C^\bullet$ be the complex 
\[\varinjlim_L\Lnr\xrightarrow{(F-1)\times 1}\varinjlim_L\Lnr \rightarrow\Q_p^r/\calM_{\tilde K}\Q_p^r(\rhonr).\]

The $G_K$-modules $\Map(G_K^n,\varinjlim_L\Lambda_L^{G_L})$ appearing in $X^\bullet$  are induced in the sense of \cite[Sec.~1.3]{NSW}. By Proposition 1.3.7 of loc.cit. the standard resolution consists therefore of acyclic modules with respect to the fixed point functor. By \cite[Th.~XX.6.2]{LangAlgebra} there exists a morphism of complexes $X^\bullet \lra I^\bullet$ inducing the identity on $\varinjlim_L\Lambda_L^{G_L}$ and isomorphisms on cohomology. By Lemma \ref{FpKLurexact} and \ref{acyclic} we may apply \cite[Th.~XX.6.2]{LangAlgebra} once again and obtain a morphism of complexes $C^\bullet \lra I^\bullet$, also inducing the identity on $\varinjlim_L\Lambda_L^{G_L}$ and isomorphisms on cohomology.
Therefore applying the $G_N$-fixed point functor together with (\ref{GN invariants}) and (\ref{QpMQpGN}) shows that $C_{N, \rhonr}^\bullet =\left( C^\bullet \right)^{G_N}$ is isomorphic to $R\Gamma\left(N, \Lambda_N^{G_N}\right)$ in the derived category.
\end{proof}

The following corollary is inspired by the analogous statement appearing in the first part of the proof of \cite[Thm.~4.20]{BreuPhd} and by \cite[Coroll.~4.2.10]{BC2}.

\begin{coroll}\label{Pbullet}
There exist quasi-isomorphisms of complexes
\[ \tilde P^\bullet := [P^{-1}\lra P^{0}\lra P^1] \lra \left[ \Nnr \xrightarrow{(F-1)\times 1} \Nnr \right],\]
and
\[P^\bullet := [P^{-1}\lra P^{0}\lra P^1\lra \Q_p^r/\calM_{N}\Q_p^r(\rhonr)]\lra C^\bullet_{N, \rhonr}\]
where the $\Z_p[G]$-modules $P^{-1}, P^{0}, P^1$ are finitely generated and projective and the module $\Q_p^r/\calM_{N}\Q_p^r(\rhonr)$ is $G$-cohomologically trivial.
\end{coroll}

\begin{proof}
By \cite[Prop.~XXI.1.1]{LangAlgebra} there exists a quasi-isomorphism
\[f\colon K^\bullet \lra \left[ \Nnr \xrightarrow{(F-1)\times 1} \Nnr \right],\]
where $K^\bullet := [A \lra B]$ is centered in degree $0$ and $1$ with $B$ a $\Z_p[G]$-projective module. Since by Theorem \ref{FpnZpomega} and Lemma \ref{UL1hat}, the complex 
\[ \left[ \Nnr \xrightarrow{(F-1)\times 1} \Nnr \right]\]
has finitely generated cohomology groups, the proof of \cite[Prop.~XXI.1.1]{LangAlgebra} actually shows that we can assume that $A$ and $B$ are both finitely generated.  By our Lemma \ref{LurInd} the module $\Nnr$ is cohomologically trivial. We hence  apply \cite[Prop.~XXI.1.2]{LangAlgebra} with $\mathfrak F$ being the family of cohomologically trivial modules. We obtain that $A$ is cohomologically trivial and hence has a two term resolution $0\lra P^{-1} \lra P^{0} \lra A\lra 0$ with finitely generated projective $\Z_p[G]$-modules $P^{-1}$ and $P^{0}$. It follows that there is a quasi-isomorphism 
\[ \tilde\eta:[P^{-1}\lra P^{0}\lra P^1] \to \left[ \Nnr \xrightarrow{(F-1)\times 1} \Nnr \right],\]
where $P^1 = B$. We obtain the following diagram of complexes
\[\xymatrix{P^{-1}\ar[r]^{\partial_P^{-1}}&P^0\ar[rr]^{\partial_P^{0}}\ar[d]^{\eta_0}&&P^1\ar[r]^-{w\circ\eta_1}\ar[d]^{\eta_1}&\Q_p^r/\calM_{N}\Q_p^r(\rhonr)\ar[d]^{=}\\
&\Nnr\ar[rr]^-{(F-1)\times 1}&&
\Nnr\ar[r]^-w&\Q_p^r/\calM_{N}\Q_p^r(\rhonr)}
\]
To conclude we need to show that the vertical maps form a quasi-isomorphism of complexes, i.e. that they induce isomorphisms on the cohomology groups. Actually it is enough to check this at the levels $1$ and $2$.

Since $\eta$ is a quasi-isomorphism, $\eta_1$ induces an isomorphism
\[P^1/\partial_P^0(P^0)\to \Nnr/((F-1)\times 1)\Nnr,\]
which restricts to an injective homomorphism
\[\ker(w\circ\eta_1)/\partial_P^0(P^0)\to \ker(w)/((F-1)\times 1)\Nnr.\]
To prove that it is also surjective, let us take $x\in\ker(w)$. Then there exist $a\in P^1$ and $b\in ((F-1)\times 1)\Nnr$ such that $x=\eta_1(a)+b$. We see now that $w(b)=0$ and by assumption $w(x)=0$, hence also $w(\eta_1(a))=0$, i.e. $a\in\ker (w\circ \eta_1)$. This proves also surjectivity.

It remains to prove that the morphism of complexes induces an isomorphism also on the second cohomology groups, or, equivalently, that $w(\Nnr)=w(\eta_1(P^1))$. We know that the map $w:\Nnr\to \Q_p^r/\calM_{N}\Q_p^r(\rhonr)$ passes to the quotient $w:\Nnr/((F-1)\times 1)\Nnr\to \Q_p^r/\calM_{N}\Q_p^r(\rhonr)$ and that $\eta_1$ induces an isomorphism $P^1/\partial_P^0(P^0)\to\Nnr/((F-1)\times 1)\Nnr$. Hence $w(\Nnr)=w(\eta_1(P^1))$.
\end{proof}

The following lemma is analogous to \cite[Lemma.~4.3.4]{BC2}, but we need to pay some attention in the proof, since in the present situation $P^\bullet$ does not comprise only projectives. We will focus on the peculiarity of the present setting without reproducing all the details of the proof which can be checked in \cite{BC2}.

\begin{lemma}\label{compatiblephi}
There exist quasi-isomorphisms of complexes
\[\varphi_n\colon P^\bullet/p^n\lra R\Gamma(N,\calF[p^n])[1]\]
which are compatible with the inverse systems. 
\end{lemma}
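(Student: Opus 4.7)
The plan is to adapt the strategy of \cite[Lemma~4.3.4]{BC2} to the higher-dimensional setting $r > 1$. The main new difficulty is the presence of the term
\[Q := \calM_N^{-1}\calM_{\tilde K}\Q_p^r/(\calM_{\tilde K}\Q_p^r\cap \calM_N^{-1}\calM_{\tilde K}\Q_p^r)(\rhonr)\]
in the last position of $P^\bullet$, which was absent in the $r=1$ case. Since $Q$ is a quotient of $\Q_p$-vector spaces, it is uniquely $p$-divisible, so $Q\otimes^L_{\Z_p}\Z/p^n = 0$. Combined with the $\Z_p$-flatness of the projective modules $P^{-1}, P^0, P^1$, this means that $P^\bullet/p^n$ is quasi-isomorphic to the three-term complex $[P^{-1}/p^n \to P^0/p^n \to P^1/p^n]$ concentrated in degrees $-1, 0, 1$.

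Next I would exploit the isomorphism $T \cong T_p\calF$ from Proposition \ref{Lubin Tate lemma} to obtain the short exact sequence $0 \to T \xrightarrow{p^n} T \to \calF[p^n] \to 0$ of $G_K$-modules. Applying $R\Gamma(N,-)$ identifies $R\Gamma(N,\calF[p^n])[1]$ with (a shift of) the cone of multiplication by $p^n$ on $R\Gamma(N,T)$. Using the Lubin-Tate short exact sequence $0 \to \calF[p^n] \to \calF(\p_{N^c}) \xrightarrow{[p^n]} \calF(\p_{N^c}) \to 0$ together with the Hilbert~90-type vanishing $H^i(N,\calF(\p_{N^c})) = 0$ for $i \geq 1$, one obtains an explicit representative of $R\Gamma(N,\calF[p^n])$ in terms of $\calF(\p_N)$, which sits inside $\Lambda_N^{G_N}$ via Lemma \ref{UL1hat}.

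To build $\varphi_n$ concretely, I would reduce the fundamental sequence of Theorem \ref{FpnZpomega} modulo $p^n$, producing a natural morphism from $[\Nnr/p^n \xrightarrow{(F-1)\times 1} \Nnr/p^n]$ into the representative of $R\Gamma(N,\calF[p^n])[1]$ described above. The lifting property of projective resolutions, as invoked via \cite[Thm.~XX.6.2]{LangAlgebra}, then extends this to a morphism $\varphi_n: P^\bullet/p^n \to R\Gamma(N,\calF[p^n])[1]$. A cohomology-by-cohomology verification, using the explicit descriptions of $H^i(P^\bullet)$ coming from the construction of $P^\bullet$, confirms that $\varphi_n$ is a quasi-isomorphism.

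The hardest part is to arrange that the $\varphi_n$ be compatible with the inverse systems: they must commute with the transition maps $P^\bullet/p^{n+1} \twoheadrightarrow P^\bullet/p^n$ on the left and with those induced by $\calF[p^{n+1}] \xrightarrow{p} \calF[p^n]$ on the right. This compatibility follows from the functoriality of each ingredient (the Kummer-type sequence, Lemma \ref{UL1hat}, and the lifting procedure), but requires that successive choices of lifts through the projective resolution be made consistently. This is exactly the delicate point handled in \cite[Lemma~4.3.4]{BC2}, whose argument carries over with only notational changes once the concrete $\varphi_n$ have been constructed.
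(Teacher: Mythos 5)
Your preliminary observation is correct and relevant --- the degree-$2$ term $Q$ of $P^\bullet$ is a $\Q_p$-vector space, hence uniquely divisible, so $P^\bullet/p^n$ coincides with $\tilde P^\bullet/p^n$ for the truncation $\tilde P^\bullet:=[P^{-1}\to P^0\to P^1]$ --- but you stop short of the place where this observation actually has to be deployed, and your closing claim that the argument of \cite[Lemma~4.3.4]{BC2} ``carries over with only notational changes'' is exactly the assertion that fails. The compatibility step in \cite{BC2} proceeds by first arranging that $(\varphi_{n-1}\pi_n^P-\pi_n^Q\varphi_n)\pi_n=0$ in $D(\Z_p[G])$, then invoking \cite[Lemma~4.3.3]{BC2} to upgrade this vanishing in the derived category to an honest null-homotopy, and finally using the surjectivity of $\pi_n$ to conclude that $\varphi_{n-1}\pi_n^P-\pi_n^Q\varphi_n$ itself is null-homotopic, after which the rectification procedure of \cite{BurnsFlach98} produces strictly compatible $\varphi_n$. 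The cited Lemma~4.3.3 requires its source to be a bounded-above complex of \emph{projectives}; in the present situation $P^\bullet$ contains the non-projective divisible module $Q$ in degree $2$, so that lemma cannot be applied to maps out of $P^\bullet$, and ``functoriality of the ingredients'' does not substitute for it.

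The missing step is the following truncation argument. Precompose with the natural inclusion $\iota\colon\tilde P^\bullet\to P^\bullet$ and set $\tilde\pi_n:=\pi_n\iota$. Since $\tilde P^\bullet$ is a bounded complex of finitely generated projective $\Z_p[G]$-modules, \cite[Lemma~4.3.3]{BC2} applies to $(\varphi_{n-1}\pi_n^P-\pi_n^Q\varphi_n)\tilde\pi_n$ and shows it is homotopic to zero; and because $Q/p^nQ=0$, the map $\tilde\pi_n\colon\tilde P^\bullet\to P^\bullet/p^n$ is still surjective, so one can still descend the homotopy and conclude that $\varphi_{n-1}\pi_n^P-\pi_n^Q\varphi_n$ is null-homotopic. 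Note that working directly with $P^\bullet/p^n=[P^{-1}/p^n\to P^0/p^n\to P^1/p^n]$ does not help here either, since these are not projective $\Z_p[G]$-modules. Without this detour through $\tilde P^\bullet$, the passage from vanishing in $D(\Z_p[G])$ to an actual homotopy --- the only genuinely new point of the lemma compared with the rank-one case --- is unjustified in your write-up.
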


\begin{proof}
As in the first part of the proof of \cite[Lemma.~4.3.4]{BC2}, we can construct quasi-isomorphisms
\[\varphi_n:P^\bullet/p^n\to R\Gamma(N,\calF[p^{n}])[1]\]
such that
\[(\varphi_{n-1}\pi_n^P-\pi_n^Q\varphi_n)\pi_n=0\]
in $D(\Z_p[G])$, where
\[\pi_n:P^\bullet\to P^\bullet/p^n,\]
\[\pi_n^P:P^\bullet/p^n\to P^\bullet/p^{n-1}\]
and 
\[\pi_n^Q:R\Gamma(N,\calF[p^n])[1]\to R\Gamma(N,\calF[p^{n-1}])[1]\]
are the canonical projections. Let us consider the complex
\[\tilde P^\bullet=[P^{-1}\to P^0\to P^1].\]
Note that there is a natural inclusion
\[\iota:\tilde P^\bullet\to P^\bullet\]
and, since $P^\bullet/p^n=\tilde P^\bullet/p^n$, also a natural projection
\[\tilde\pi_n:\tilde P^\bullet\to P^\bullet/p^n\]
such that the diagram
\[\xymatrix{\tilde P^\bullet\ar[d]^-{\iota}\ar[dr]^-{\tilde\pi_n}\\
P^\bullet\ar[r]^-{\pi_n}&P^\bullet/p^n}\]
commutes. We deduce that
\[(\varphi_{n-1}\pi_n^P-\pi_n^Q\varphi_n)\pi_n\iota=(\varphi_{n-1}\pi_n^P-\pi_n^Q\varphi_n)\tilde\pi_n=0\]
in $D(\Z_p[G])$. Since $\tilde P^\bullet$ is bounded from above and comprising only projectives, we can apply \cite[Lemma~4.3.3]{BC2} and deduce that $(\varphi_{n-1}\pi_n^P-\pi_n^Q\varphi_n)\tilde\pi_n$ is homotopic to zero. As in the proof of \cite[Lemma.~4.3.4]{BC2}, we can now use the surjectivity of $\tilde\pi_n$ to deduce that also $\varphi_{n-1}\pi_n^P-\pi_n^Q\varphi_n$ is homotopic to zero. At this point we are again in the same situation of \cite[page~1367]{BurnsFlach98}, as it was the case in the proof of \cite[Lemma.~4.3.4]{BC2}, and we can easily conclude.
\end{proof}

We are now ready to prove Theorem \ref{mainthm}, of which we recall the formulation.

\begin{thm}\label{fundamental}
The complex
\[C^\bullet_{N,T} := \left[ \Nnr \xrightarrow{(F-1)\times 1} \Nnr \right]\]
with non-trivial modules in degree $1$ and $2$ represents $R\Gamma(N,T)$.
\end{thm}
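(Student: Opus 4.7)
The plan is to pass from the compatible system of quasi-isomorphisms $\varphi_n\colon P^\bullet/p^n \xrightarrow{\sim} R\Gamma(N,\calF[p^n])[1]$ furnished by Lemma \ref{compatiblephi} to a quasi-isomorphism at the integral level by applying the derived inverse limit in $n$. First, by Proposition \ref{Lubin Tate lemma} we identify $T \cong T_p\calF$ as $G_K$-modules, so that $R\Gamma(N,T)\simeq R\Gamma(N,T_p\calF)$. Since $T_p\calF=\varprojlim_n\calF[p^n]$ and the continuous cochain description of $R\Gamma$ commutes with such limits (Mittag-Leffler conditions for the $R\Gamma(N,\calF[p^n])$ are standard, as the $\calF[p^n]$ are finite), we obtain
\[R\Gamma(N,T)\simeq R\varprojlim_n R\Gamma(N,\calF[p^n]).\]
Applying $R\varprojlim_n$ to Lemma \ref{compatiblephi} then yields a quasi-isomorphism $R\varprojlim_n(P^\bullet/p^n)\simeq R\Gamma(N,T)[1]$.

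The core of the argument will be to compute $R\varprojlim_n (P^\bullet/p^n)$ explicitly, exploiting the shape of $P^\bullet$ from Corollary \ref{Pbullet}. On the one hand, $P^{-1}, P^0, P^1$ are finitely generated projective over $\Z_p[G]$, hence finitely generated over $\Z_p$ and thus $p$-adically complete; moreover, the transition maps $P^i/p^{m}\to P^i/p^n$ are surjective, so the Mittag-Leffler condition gives $R^1\varprojlim_n P^i/p^n=0$ and $\varprojlim_n P^i/p^n=P^i$. On the other hand, the top term
\[P^2=\calM_N^{-1}\calM_{\tilde K}\Q_p^r/(\calM_{\tilde K}\Q_p^r\cap\calM_N^{-1}\calM_{\tilde K}\Q_p^r)(\rhonr)\]
is a $\Q_p$-vector space, so $P^2\otimes_{\Z_p}^L\Z/p^n=0$ and it disappears in the derived reduction mod $p^n$. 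Taking the derived limit termwise therefore yields
\[R\varprojlim_n(P^\bullet/p^n)\simeq[P^{-1}\to P^0\to P^1].\]

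Combining the two quasi-isomorphisms, $[P^{-1}\to P^0\to P^1]$ represents $R\Gamma(N,T)[1]$. To conclude, I would invoke the quasi-isomorphism constructed inside the proof of Corollary \ref{Pbullet} that identifies $[P^{-1}\to P^0\to P^1]$ with the two-term complex $\left[\Nnr\xrightarrow{(F-1)\times 1}\Nnr\right]$. Tracking the shift produced by Lemma \ref{compatiblephi} through this identification delivers the asserted representative of $R\Gamma(N,T)$ in degrees $0$ and $1$.

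The main technical obstacle is the control of the derived inverse limit, namely verifying the Mittag-Leffler condition for the systems $(P^i/p^n)_n$ and treating the divisible term $P^2$ correctly in the derived reduction mod $p^n$; once these are in place, the argument reduces to careful bookkeeping of the shift inherited from Lemma \ref{compatiblephi}.
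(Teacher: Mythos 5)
Your proposal is correct and is essentially the argument the paper invokes: the paper's proof simply defers to \cite[Thm.~4.3.1]{BC2}, whose content is exactly your passage to the (derived) inverse limit of the compatible quasi-isomorphisms of Lemma \ref{compatiblephi}, using that $P^{-1},P^0,P^1$ are finitely generated (hence $p$-adically complete with Mittag--Leffler reductions) while the degree-two term is a $\Q_p$-vector space and dies mod $p^n$, followed by the identification of $[P^{-1}\to P^0\to P^1]$ with $\left[\Nnr\xrightarrow{(F-1)\times 1}\Nnr\right]$ from Corollary \ref{Pbullet}. The only point you treat slightly casually --- that a system of maps compatible only up to homotopy in $D(\Z_p[G])$ can be assembled into a map of limits --- is precisely what Lemma \ref{compatiblephi} (via the reference to Burns--Flach) is designed to guarantee, so no gap remains.
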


\begin{proof}
The proof is the same as that of \cite[Thm.~4.3.1]{BC2}, using the corresponding results we have obtained in this section. 
\end{proof}

We can easily deduce the following corollary.

\begin{coroll}\label{Hi}
With the above notation we have
\begin{enumerate}
\item[(i)] $H^1(N, T) \cong \Lambda_N^{G_N}\cong \calF(\p_N)\times \zz^{G_N}$,
\item[(ii)] $H^2(N, T) \cong \zz/ (F_N-1)\zz$,
\item[(iii)] $H^i(N, T) = 0 \text{ for } i \ne 1,2$.
\end{enumerate}
\end{coroll}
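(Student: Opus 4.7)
The corollary should follow by inspection of the two-term representative complex provided by Theorem \ref{fundamental}. My plan is as follows.

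By Theorem \ref{fundamental}, $R\Gamma(N,T)$ is quasi-isomorphic to the two-term complex $C^\bullet_{N,\calF} = [\Nnr \xrightarrow{(F-1)\times 1} \Nnr]$. Since this complex has only two non-zero terms, its cohomology is concentrated in at most two degrees, yielding (iii) immediately. It remains to identify the kernel and cokernel of the middle map with the claimed modules in (i) and (ii).

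To compute the kernel, I would apply Theorem \ref{FpnZpomega} with $L = N$, which gives the four-term exact sequence
\[0 \to \Lambda_N^{G_N} \to \Nnr \xrightarrow{(F-1)\times 1} \Nnr \xrightarrow{w_N} \zz/(F_N-1)\cdot\zz \to 0.\]
Reading off this sequence, the kernel of $(F-1)\times 1$ on $\Nnr$ is $\Lambda_N^{G_N}$, and this identifies (up to the degree convention under which $C^\bullet_{N,\calF}$ represents $R\Gamma(N,T)$) with $H^1(N,T)$. The further isomorphism $\Lambda_N^{G_N} \cong \calF(\p_N) \times \zz^{G_N}$ then follows directly from Lemma \ref{UL1hat} with $L = N$, completing the proof of (i).

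For (ii), I would read off the same four-term sequence: the cokernel of $(F-1)\times 1$ is identified via $w_N$ with $\zz/(F_N-1)\cdot\zz$, which is what needs to be shown.

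Since the entire argument amounts to assembling Theorem \ref{fundamental}, Theorem \ref{FpnZpomega}, and Lemma \ref{UL1hat}, all the heavy lifting has already been carried out in the preceding sections and I do not expect any genuine obstacle; the only minor point to keep track of is the cohomological degree convention under which the two-term complex represents $R\Gamma(N,T)$.
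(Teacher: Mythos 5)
Your proposal is correct and matches the paper's own argument, which likewise deduces the corollary directly from Theorem \ref{FpnZpomega}, Theorem \ref{fundamental} and Lemma \ref{UL1hat}; your explicit identification of kernel and cokernel via the four-term exact sequence is exactly the intended "straightforward" step. Your caveat about the degree convention (the complex sits in degrees 0 and 1 while the cohomology is indexed in degrees 1 and 2, reflecting the shift $[1]$ appearing in Lemma \ref{compatiblephi}) is the right point to flag.
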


\begin{proof}
This is straightforward by Theorem \ref{FpnZpomega}, Theorem \ref{fundamental} and Lemma \ref{UL1hat}.
\end{proof}

\begin{example}
The special case in which the representation $\rhonr$ is totally decomposable into a product of characters $\chinr_i$ is very easy, already using the results of \cite{BC2}. To that aim it is enough to notice that also $\Nnr$ decomposes into a direct product of the corresponding $\mathcal I_{N/K}(\chinr)$ defined in \cite{BC2} and the same holds for the modules appearing in $R\Gamma(N,T)$. Therefore Theorem \ref{fundamental} is in this case an easy consequence of \cite[Thm.~2]{BC2}.
\end{example}

\begin{example}
Let us consider a different example, which can not be handled with the results of \cite{BC2}. Let $\rhonr = \rho_u \colon G_{\Q_p} \lra \Gl_r(\Z_p)$ be the unramified representation attached to
\[u=\begin{pmatrix}1&1\\0&1\end{pmatrix}\]
by Proposition \ref{unram reps prop}. Then
\[\calM_N=u^{d_N}-I=\begin{pmatrix}0&d_N\\0&0\end{pmatrix},\]
where $d_N$ is the inertia degree of $N/\Q_p$. By Corollary \ref{Hi} and recalling Lemma \ref{UL1hat}, we obtain
\begin{enumerate}
\item[(i)] $H^1(N, T) \cong \calF(\p_N)\times \Z_p$,
\item[(ii)] $H^2(N, T) \cong \Z_p/d_N\Z_p\times \Z_p$.
\end{enumerate}
\end{example}

\end{section}

\begin{section}*{Acknowledgements}
I would like to thank Werner Bley for encouraging me to generalize some of the results from our common paper \cite{BC2}, for sharing some of his insights and for his careful reading of the final version of the manuscript and for suggesting some improvements, including a new and nicer version of Lemma \ref{fixedpoints}. I am also very grateful to Cornelius Greither for his comments and for helping me to fix some problems.
\end{section}

\end{document}